%
%
%
%

%
%
%
%
%
%

\documentclass[11pt]{amsart}
\usepackage{amssymb,amsmath}

\setlength{\oddsidemargin}{ -0.5cm}
\setlength{\evensidemargin}{ -0.5cm}
\textwidth 17cm

\newcommand{\Qqq}{{\mathbb Q}}
\newcommand{\Rrr}{{\mathbb R}}

\newcommand{\bin}{\operatorname{Bin}}

\newtheorem{theorem}{Theorem}[section]
\newtheorem{proposition}[theorem]{Proposition}
\newtheorem{lemma}[theorem]{Lemma}
\newtheorem{definition}[theorem]{Definition}
\newtheorem{notation}[theorem]{Notation}
\newtheorem{corollary}[theorem]{Corollary}
\newtheorem{example}[theorem]{Example}
\newtheorem{remark}[theorem]{Remark}

\newcommand{\hz}{\widehat{0}}
\newcommand{\ho}{\widehat{1}}

\newcommand{\tensor}{\otimes}
\newcommand{\rightarrowdots}{\rightarrow \cdots\rightarrow}
\newcommand{\cupdots}{\cup \cdots \cup}

\newcommand{\ab}{\av\bv}
\newcommand{\av}{{\bf a}}
\newcommand{\bv}{{\bf b}}
\newcommand{\cd}{\cv\dv}
\newcommand{\cv}{{\bf c}}
\newcommand{\dv}{{\bf d}}
\newcommand{\tv}{{\bf t}}

\newcommand{\walks}[2]{#1 \langle #2\rangle}
\newcommand{\wt}{\operatorname{wt}}

\parskip=12pt

\makeatletter
\@addtoreset{equation}{section}
\makeatother

\newcommand{\trace}{\operatorname{trace}}

\begin{document}

\title{Level Eulerian Posets}

\author{Richard EHRENBORG \and G\'abor HETYEI \and Margaret READDY}

\address{Department of Mathematics, University of Kentucky, Lexington,
  KY 40506-0027 .\hfill\break WWW: \tt http://www.ms.uky.edu/\~{}jrge/.}

\address{Department of Mathematics and Statistics,
  UNC-Charlotte, Charlotte NC 28223-0001.\hfill\break
WWW: \tt http://www.math.uncc.edu/\~{}ghetyei/.}

\address{Department of Mathematics, University of Kentucky, Lexington,
  KY 40506-0027 .\hfill\break
WWW: \tt http://www.ms.uky.edu/\~{}readdy/.}

\subjclass [2000]{Primary 06A07; Secondary 05A15, 05C30, 52B22, 57M15}


\begin{abstract}
The notion of level posets is introduced.
This class of infinite posets has the property that between every two
adjacent ranks the same bipartite graph occurs.
When the adjacency matrix is indecomposable, 
we determine the length of the longest
interval one needs to check to verify 
Eulerianness.
Furthermore, we show that every level Eulerian poset
associated to an indecomposable matrix has even order.
A condition for verifying shellability is introduced
and is automated using the algebra of walks.
Applying the Skolem--Mahler--Lech theorem,
the $\ab$-series of a level poset
is shown to be a rational generating function in
the non-commutative variables $\av$ and $\bv$.
In the case the poset is also Eulerian,
the analogous result holds for the $\cd$-series.
Using coalgebraic techniques a method is developed
to recognize the $\cd$-series matrix of a level Eulerian poset.
\end{abstract}

\maketitle

\section{Introduction}

It is the instinct of every mathematician that whenever an infinite
object is defined in terms of a finite object one should be able to
describe various apparently infinite properties of the infinite object
in finitely-many terms. For example, when one considers infinite random
walks on a finite digraph it is satisfying to be able to describe the
asymptotic properties of these random walks in terms of the
finitely-many eigenvalues of the adjacency matrix.

In this paper we consider infinite partially ordered
sets (posets) associated to finite
directed graphs. The level poset of a graph that we introduce is an
infinite voltage graph closely related to the finite voltage graphs
studied by Gross and Tucker. It is a natural question to consider whether
a level poset has Eulerian intervals,
that is, every non-singleton interval satisfies the Euler-Poincar\'e relation.
One method to form Eulerian posets is via the doubling operations.
This corresponds to a standard trick widely used
in the study of network flows.
We extend these operations to level posets.

We look at questions that are often asked in the study of Eulerian
posets: verifying Eulerianness,
finding sufficient conditions which imply the order complex
is shellable and describing the flag numbers.  Usually these questions are
aimed at a specific family of finite posets and
explicit answers are given. Here we instead look at infinitely-many intervals
defined by a single finite directed graph.

When the underlying graph of a level poset is strongly connected,
we show that it is enough to verify the Eulerian condition for intervals
up to a certain rank. This bound is linear in terms of
the two parameters period and index of the level poset.
Furthermore, for these Eulerian level posets we also
obtain that their order must be even.
The order two Eulerian poset is the classical butterfly poset.
See Example~\ref{example_order_4}
for an order $4$ example.

To show that a level poset has shellable intervals
we introduce the vertex shelling order condition.
This condition is an instance of
Kozlov's $CC$-labelings
and it implies shellability.
Furthermore,
we prove it is enough to verify this condition
for intervals whose length is bounded by the sum of
period and the index.
This is still a large task. However, we automate
it using the algebra of walks, reducing the problem of
computing powers of a certain matrix modulo an ideal.
See Example~\ref{example_order_4_shelling}
for such a calculation.
In this example we conclude that the
order complexes of the intervals are
not just homotopic equivalent
to spheres, but homeomorphic to them.

The $\cd$-index is an invariant encoding
the flag $f$-vector of an Eulerian poset which
removes all linear relations among the flag $f$-vector entries.
It is a non-commutative homogeneous polynomial 
in the two variables $\cv$ and $\dv$. For level Eulerian posets
there are infinitely-many intervals. We capture this information
by summing all the $\cd$-indicies. This gives a non-commutative
formal power series which we call the $\cd$-series. We show that the
$\cd$-series is a rational non-commutative generating function.
See Theorem~\ref{theorem_cd_rational}.

Recall that the infinite butterfly poset has the property
that the $\cd$-index of any length $m+1$ interval 
equals $\cv^{m}$.
In our order $4$ example
of a level Eulerian poset,
there are intervals of length $m+1$ whose
$\cd$-index is the sum of every degree $m$ $\cd$-monomial.
See Corollary~\ref{corollary_sum_of_monomials}.

In the concluding remarks we end with some open questions.

\section{Preliminaries}

\subsection{Graded, Eulerian and half-Eulerian posets}

A partially ordered set $P$ is {\em graded} if it has a unique minimum element
$\hz$, a unique maximum element $\ho$ and a rank function $\rho:P\rightarrow
{\mathbb N}$  such that $\rho(\hz)=0$ and for every cover relation
$x \prec y$
we have $\rho(y)-\rho(x)=1$. The rank of $\ho$ is called the 
{\em rank} of the
poset.
For two elements
$x \leq y$ in $P$ define
the rank difference
$\rho(x,y)$ by $\rho(y) - \rho(x)$.
Given a graded poset $P$ of rank $n+1$ and a subset
$S\subseteq \{1,\ldots,n\}$, define the {\em
$S$-rank selected subposet of $P$} to be the poset
$P_{S} = \{ x \in P\::\: \rho(x) \in S\} \cup \{\hz,\ho\}.$
The {\em flag $f$-vector}
$(f_{S}(P)\: :\: S \subseteq \{1, \ldots, n\})$
of $P$ is
the $2^n$-dimensional vector whose entry
$f_S(P)$ is the number of maximal chains in $P_S$. For further details
about graded posets, see Stanley~\cite{Stanley_EC_1}. 

A graded partially ordered set $P$ is {\em Eulerian} if every interval
$[x,y]$ in $P$ of rank at least $1$
satisfies $\sum_{x \leq z \leq y} (-1)^{\rho(z)}=0$.
Equivalently, the M\"obius function $\mu$ of the poset $P$
satisfies $\mu(x,y) = (-1)^{\rho(x,y)}$.
Classical examples of Eulerian posets include the face lattices
of polytopes and the Bruhat order of a Coxeter group.

The {\em horizontal double} $D_{\leftrightarrow}(P)$
of a graded poset $P$ is obtained by
replacing each element $x\in P - \{\hz,\ho\}$ by two copies $x_{1}$ and
$x_{2}$ and preserving the partial order of the original poset $P$, that
is, we set $x_{i}<y_{j}$ in $D_{\leftrightarrow}(P)$
if and only if $x<y$ holds in $P$.
Following Bayer and Hetyei~\cite{Bayer_Hetyei_E,Bayer_Hetyei_G},
we call a graded poset $P$ {\em half-Eulerian} if its horizontal double
is Eulerian. The following lemma appears
in~\cite[Proposition~2.2]{Bayer_Hetyei_E}. 
\begin{lemma}[Bayer--Hetyei]
\label{lemma_h_double}
A graded partially ordered set $P$ is half-Eulerian if and only if for every
non-singleton interval $[x,y]$ of $P$
$$
    \sum_{x < z < y}
        (-1)^{\rho(x,z) -1}
  =
    \left\{ \begin{array}{c l}
             1 & \text{ if $\rho(x,y)$ is even,} \\
             0 & \text{ if $\rho(x,y)$ is odd.}
            \end{array} \right.
$$
\end{lemma}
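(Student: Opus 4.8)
The plan is to unwind the definition of the horizontal double and to analyze directly every non-singleton interval of $D_{\leftrightarrow}(P)$. Since $D_{\leftrightarrow}(P)$ inherits the rank function of $P$, with each copy $x_{i}$ receiving rank $\rho(x)$, an interval $[u,v]$ of $D_{\leftrightarrow}(P)$ of rank at least one is Eulerian exactly when $(-1)^{\rho(u)} + (-1)^{\rho(v)} + \sum_{u < w < v} (-1)^{\rho(w)} = 0$. First I would record two structural observations. The two copies $x_{1}$ and $x_{2}$ of a single element are incomparable in $D_{\leftrightarrow}(P)$, so the only non-singleton intervals of $D_{\leftrightarrow}(P)$ are those of the forms $[x_{i},y_{j}]$, $[\hz,y_{j}]$, $[x_{i},\ho]$, and $[\hz,\ho]$, where in the first three cases $x,y \in P - \{\hz,\ho\}$ and $x < y$. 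Secondly, in each of these cases the corresponding open interval of $D_{\leftrightarrow}(P)$ is precisely the set of both copies $z_{1},z_{2}$ of each element $z$ lying strictly between the two endpoints in $P$; hence its alternating sum $\sum (-1)^{\rho(w)}$ equals twice the sum $\sum_{x < z < y} (-1)^{\rho(z)}$ taken over the corresponding open interval of $P$, while the two endpoint contributions are unchanged.

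Combining these observations, in each of the four cases the Eulerian equation for the interval of $D_{\leftrightarrow}(P)$ becomes, after dividing by the sign $(-1)^{\rho(u)}$ of its bottom element $u$ (which is $1$ when $u = \hz$), the single identity
$$
1 + (-1)^{\rho(x,y)} + 2 \sum_{x < z < y} (-1)^{\rho(x,z)} = 0 ,
$$
where $[x,y]$ now runs over all non-singleton intervals of $P$, allowing $x = \hz$ and $y = \ho$. Solving for the inner sum gives $\sum_{x<z<y}(-1)^{\rho(x,z)} = -1$ when $\rho(x,y)$ is even and $\sum_{x<z<y}(-1)^{\rho(x,z)} = 0$ when $\rho(x,y)$ is odd; since $(-1)^{\rho(x,z)-1} = -(-1)^{\rho(x,z)}$, this is exactly the two-case formula in the statement. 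The \emph{only if} direction then follows by applying the Eulerian condition of $D_{\leftrightarrow}(P)$ to whichever of $[x_{1},y_{1}]$, $[\hz,y_{1}]$, $[x_{1},\ho]$, $[\hz,\ho]$ is attached to a given interval $[x,y]$ of $P$, according to which endpoints of $[x,y]$ are extremal. The \emph{if} direction follows because every non-singleton interval of $D_{\leftrightarrow}(P)$ is one of the four listed types, so the displayed identity forces its M\"obius-type sum to vanish. The rank-one intervals, whose open part is empty, are consistent with this, both sides reducing to $0$.

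I do not expect a serious obstacle: the argument is bookkeeping once the two structural observations are in place. The single point requiring care is the asymmetry between the two parities of $\rho(x,y)$. Doubling the open interval replaces the coefficient $1$ that a genuine Eulerian relation would carry in front of the inner sum by a coefficient $2$, while the constant term $1 + (-1)^{\rho(x,y)}$ equals $2$ or $0$ according to parity; it is precisely this mismatch that produces the case distinction on the right-hand side. One should also check the boundary cases $x = \hz$ and $y = \ho$ on their own, since there one of the two doubled endpoints is absent, but the values $\rho(\hz) = 0$ and $\rho(\ho) = n$ make these collapse into the same identity.
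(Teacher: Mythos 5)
Your proof is correct and follows the natural direct route: unwind the definition of $D_{\leftrightarrow}(P)$, observe that the open part of any non-singleton interval in the double consists of two copies of the open part of the corresponding interval in $P$ (while the endpoints are not doubled), write out the Eulerian alternating-sum condition, and solve for the inner sum after factoring out $(-1)^{\rho(u)}$. The paper itself does not prove this lemma but cites it to Bayer and Hetyei, so there is no competing argument in the paper to compare against; your bookkeeping, including the parity case split produced by the $2$-versus-$1+(-1)^{\rho(x,y)}$ mismatch and the boundary cases $x = \hz$, $y = \ho$, is handled correctly.
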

As noted in~\cite[Section~4]{Bayer_Hetyei_G}, every graded poset $P$
gives rise to a half-Eulerian poset via the
``vertical doubling'' operation.

\begin{definition}
Given a graded poset $P$,
the {\em vertical double} of $P$ is the set
 $D_{\updownarrow}(P)$ obtained by replacing each $x\in P- \{\hz,\ho\}$ by two
copies $x_{1}$ and $x_{2}$, with $u<_{D_{\updownarrow}(P)} v$ in $Q$
exactly when one of the following conditions hold:
\begin{itemize}
\item[(i)] $u=\hat{0}$, $v\in P-\{\hat{0}\}$;
\item[(ii)] $u\in P-\{\hat{1}\}$, $v=\hat{1}$;
\item[(iii)] $u=x_{1}$ and $v=x_{2}$ for some $x\in
  P-\{\hat{0},\hat{1}\}$; or
\item[(iv)] $u=x_{i}$ and $v=y_{j}$ for some $x, y\in
  P-\{\hat{0},\hat{1}\}$, with $x<_P y$.
\end{itemize}
\end{definition}

\begin{lemma}[Bayer--Hetyei]
\label{lemma_v_double}
For a graded poset $P$,
the vertical double
$D_{\updownarrow}(P)$ is a half-Eulerian poset.
 \end{lemma}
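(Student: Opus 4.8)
The plan is to apply Lemma~\ref{lemma_h_double} directly to $Q = D_{\updownarrow}(P)$. Write $n+1$ for the rank of $P$. First I would record the rank function of $Q$, which is forced by its cover relations: $\rho(\hz) = 0$, $\rho(x_1) = 2\rho(x) - 1$, $\rho(x_2) = 2\rho(x)$, and $\rho(\ho) = 2n+1$. Let $\pi : Q \to P$ be the projection with $\pi(x_i) = x$, $\pi(\hz) = \hz$, $\pi(\ho) = \ho$; then for $u < z$ in $Q$ the parity of $\rho(u,z)$ is determined by $\rho(\pi(u))$, $\rho(\pi(z))$ and the layers ($1$ or $2$) of $u$ and $z$, and in particular $\rho(u,z_1)$ and $\rho(u,z_2)$ always differ by $1$.

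Next I would classify the non-singleton intervals $[u,v]$ of $Q$ into nine families according to the forms of the endpoints, with $u$ one of $\hz$, $x_1$, $x_2$ and $v$ one of $\ho$, $y_1$, $y_2$; the case $\pi(u) = \pi(v)$ arises only as the degenerate subfamily $[x_1,x_2]$. The key structural observation is: for every $z \in P - \{\hz,\ho\}$ with $\pi(u) <_P z <_P \pi(v)$, both copies $z_1$ and $z_2$ lie in the open interval $(u,v)$, and since $\rho(z_2) = \rho(z_1) + 1$ the corresponding terms $(-1)^{\rho(u,z_1)-1}$ and $(-1)^{\rho(u,z_2)-1}$ cancel. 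Hence the alternating sum of Lemma~\ref{lemma_h_double} over $(u,v)$ collapses to the contribution of the at most two remaining ``boundary'' elements of $(u,v)$: the element $x_2$, which belongs to $(u,v)$ exactly when $u = x_1$ and $\pi(u) <_P \pi(v)$, and the element $y_1$, which belongs to $(u,v)$ exactly when $v = y_2$ and $\pi(u) <_P \pi(v)$.

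It then remains, in each of the nine families, to sum these boundary contributions, using $\rho(u, x_2) = 1$ and $\rho(u, y_1) = 2\rho(\pi(v)) - 1 - \rho(u)$, and to compare with the parity of $\rho(u,v)$. For instance: in the family $[x_2,y_1]$ (so $x <_P y$) there are no boundary terms, the sum is $0$, and $\rho(u,v) = 2(\rho(y)-\rho(x)) - 1$ is odd, as wanted; in the family $[x_1,y_2]$ with $x <_P y$ the two boundary terms give $(-1)^{1-1} + (-1)^{2(\rho(y)-\rho(x))-1} = 1 - 1 = 0$, matching the odd value $\rho(u,v) = 2(\rho(y)-\rho(x)) + 1$; in $[\hz, y_2]$ the single boundary term $y_1$ gives $(-1)^{(2\rho(y)-1)-1} = 1$, matching the even value $\rho(u,v) = 2\rho(y)$; and the degenerate interval $[x_1,x_2]$ has empty open interior, sum $0$, and odd rank difference $1$. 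The remaining families $[\hz,\ho]$, $[\hz,y_1]$, $[x_1,\ho]$, $[x_2,\ho]$, $[x_1,y_1]$ and $[x_2,y_2]$ are handled in the same fashion.

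I expect the only genuine effort to be the parity bookkeeping across the case analysis, together with the edge cases where an endpoint is $\hz$ or $\ho$ --- there one must check that the ``boundary'' element $x_2$ or $y_1$ is in fact absent (its copies being part of cancelling pairs instead). Conceptually the whole lemma rests on the single fact that vertical doubling inserts the two copies $z_1, z_2$ of each interior element in \emph{consecutive} ranks, so that they always annihilate one another in the half-Eulerian alternating sum. A clean write-up would isolate this cancellation as a sublemma and then conclude via a short table recording, for each of the nine families, the boundary terms and the parity of $\rho(u,v)$.
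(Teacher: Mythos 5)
The paper does not prove this lemma; it merely cites Bayer--Hetyei (their Section~4) and states it. So there is no in-paper argument to compare against, and what is wanted is a check that your self-contained proof is sound. It is.

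Your strategy --- reduce to the criterion of Lemma~\ref{lemma_h_double} for $Q=D_{\updownarrow}(P)$, observe that the two copies $z_1,z_2$ of any $z$ with $\pi(u)<_P z<_P \pi(v)$ both land in the open interval $(u,v)$ at consecutive $Q$-ranks and hence annihilate one another, and then account for the at most two leftover ``boundary'' elements $x_2$ (present exactly when $u=x_1$ and $\pi(u)<_P\pi(v)$) and $y_1$ (present exactly when $v=y_2$ and $\pi(u)<_P\pi(v)$) --- is correct and complete. I checked the rank function ($\rho(x_1)=2\rho_P(x)-1$, $\rho(x_2)=2\rho_P(x)$, $\rho(\ho)=2n+1$), the gradedness of $Q$ (all listed covers have rank jump $1$), the inclusion of both $z_i$ in $(u,v)$ under each of the nine endpoint types, and the parity bookkeeping in all nine families; the boundary contributions sum to $1$ precisely when $\rho_Q(u,v)$ is even and to $0$ when it is odd, as required. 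Only a tiny wording nit: in the $u=\hz$ (resp.\ $v=\ho$) cases the would-be boundary element is absent simply because $\hz$ (resp.\ $\ho$) has no copies in $Q$, not because its copies pair off --- but this does not affect the argument. A clean write-up along the lines you describe, isolating the consecutive-rank cancellation as a sublemma and finishing with a short table over the nine families, would be a perfectly good proof of the Bayer--Hetyei lemma.
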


\subsection{Shelling the order complex of a graded poset}

Recall a simplicial complex $\Delta$ is a family of subsets
({\em faces}) of a finite vertex set $V$ satisfying $\{v\} \in \Delta$
for all $v \in V$ and if $\sigma \in \Delta$ and $\tau\subseteq \sigma$
then $\tau\in \Delta$. Maximal faces are called {\em facets}.
In this paper we will only consider order
complexes of graded posets. The {\em order complex $\Delta(P)$}
of a graded poset $P$ is the simplicial complex with vertex set
$P - \{\hz,\ho\}$ whose faces are the chains
contained in $ P- \{\hz,\ho\}$, that is,
$$   \Delta(P)
   =
     \{\{x_{1},x_{2}, \ldots, x_{k}\}
          \:\: : \:\:
         \hz < x_{1} < x_{2} < \cdots < x_{k} < \ho\}   . $$

A simplicial complex is {\em pure} if every facet has the same dimension.
For a graded poset $P$ of rank $n+1$, the order complex 
$\Delta(P)$
is pure of dimension $n-1$. A pure simplicial complex $\Delta$ is
{\em shellable} if there is an ordering $F_{1},F_{2},\ldots,F_{t}$ of its
facets such that for every $k\in \{2,\ldots,t\}$
the collection of faces of $F_{k}$ contained in some earlier $F_{i}$ 
is itself a
pure simplicial complex of dimension $\dim(\Delta)-1$. Equivalently,
there exists a face $R(F_{k})$ of $F_{k}$, called the facet restriction,
 not contained in any earlier
facet such that every face $\sigma\subseteq F_{k}$ not contained in any
earlier $F_{i}$ contains $R(F_{k})$.
A complex being shellable implies
it is homotopy equivalent to a wedge of spheres of the same
dimension as the complex.  For further details,
we refer the reader to the articles of Bj\"orner
and Wachs~\cite{Bjorner,Wachs}.

A shelling of the order complex of a graded poset is usually found by
labeling the cover relations in the maximal chains
of $P$. The first such labelings were the {\em
  $CL$-labelings} introduced by Bj\"orner and
Wachs~\cite{Bjorner_Wachs_0,Bjorner_Wachs_1}. In this paper we will
consider a special example of Kozlov's {\em
  $CC$-labelings}~\cite{Kozlov}, which were rediscovered
independently by Hersh and Kleinberg (see the Introduction
of~\cite{Babson_Hersh}). 
 
\subsection{Periodicity of nonnegative matrices}

We will need a few facts regarding sufficiently high powers
of nonnegative square matrices. Unless noted otherwise, all statements cited
in this subsection may be found in the monograph of Sachkov and
Tarakanov~\cite[Chapter~6]{Sachkov_Tarakanov}.

The {\em underlying digraph $\Gamma(A)$} of a square matrix
$A=(a_{i,j})_{1\leq i,j\leq n}$ with nonnegative entries is the directed
graph on the vertex set $\{1,2,\ldots,n\}$ with $(i,j)$ being an edge if
and only if $a_{i,j}>0$. 
Here and in the rest of the paper we use the notation 
$A^{k}=(a^{(k)}_{i,j})_{1\leq i,j\leq n}$. 
Given a vertex $i$ such that there is a
directed walk of positive length from $i$ to $i$, the 
{\em period $d(i)$} 
of the vertex $i$ is the greatest common divisor of all
positive integers $k$ satisfying $a^{(k)}_{i,i}>0$.  
The period is constant on
strong components of~$\Gamma(A)$.  

\begin{lemma}
\label{lemma_period_constant}
If the vertices $i\neq j$ of $\Gamma(A)$ belong to the same strong
component then $d(i)=d(j)$.
\end{lemma}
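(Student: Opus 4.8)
The plan is to exploit the defining feature of a strong component: between $i$ and $j$ there are directed walks in both directions. First I would fix a directed walk of length $s$ from $i$ to $j$ and a directed walk of length $t$ from $j$ to $i$, so that $a^{(s)}_{i,j}>0$ and $a^{(t)}_{j,i}>0$. Since $i\neq j$, both $s$ and $t$ are positive. Concatenating the two walks produces a closed walk at $i$ of length $s+t$ and, in the other order, a closed walk at $j$ of the same length $s+t$; in particular the periods $d(i)$ and $d(j)$ are both defined, and $d(i)\mid s+t$ as well as $d(j)\mid s+t$.

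Next I would take an arbitrary integer $k$ with $a^{(k)}_{j,j}>0$, that is, an arbitrary closed walk at $j$ of positive length $k$. Splicing together the walk $i\to j$ of length $s$, this closed walk at $j$ of length $k$, and the walk $j\to i$ of length $t$ yields a closed walk at $i$ of length $s+k+t$, so $a^{(s+k+t)}_{i,i}>0$ and hence $d(i)\mid s+k+t$. Combining this with $d(i)\mid s+t$ gives $d(i)\mid k$. Since $d(j)$ is by definition the greatest common divisor of the set $\{k : a^{(k)}_{j,j}>0\}$ and $d(i)$ divides every member of that set, we conclude $d(i)\mid d(j)$. Running the same argument with the roles of $i$ and $j$ interchanged gives $d(j)\mid d(i)$, and therefore $d(i)=d(j)$.

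The proof is in essence a short gcd manipulation, so there is no serious obstacle. The only points that need care are bookkeeping ones: making sure each walk invoked actually exists, which is precisely the hypothesis that $i$ and $j$ lie in a common strong component (and that $i\neq j$ forces the connecting walks to have positive length, so that $d(i)$ and $d(j)$ make sense in the first place), and using the characterization of $d(j)$ as the gcd of the lengths of \emph{all} closed walks at $j$, so that divisibility of each such length by $d(i)$ suffices to deduce $d(i)\mid d(j)$.
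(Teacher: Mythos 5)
Your argument is correct. The paper itself does not prove this lemma but cites it to Sachkov and Tarakanov (Chapter~6), and your proof is the standard one: fix walks $i\to j$ of length $s>0$ and $j\to i$ of length $t>0$ (positive since $i\neq j$), note $d(i)\mid s+t$ and $d(j)\mid s+t$ from the concatenated closed walks, then splice an arbitrary closed walk of length $k$ at $j$ between them to get $d(i)\mid s+k+t$, hence $d(i)\mid k$, hence $d(i)\mid d(j)$, and conclude by symmetry. The bookkeeping you flag --- that the strong-component hypothesis supplies the connecting walks and that $i\neq j$ makes their lengths positive so both periods are defined --- is exactly the care needed, and you handle it properly.
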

The matrix $A$ is {\em indecomposable} or {\em
  irreducible} if for any $i,j\in \{1,2,\ldots,n\}$ there is a $t>0$ such
that the $(i,j)$ entry of $A^t$ is positive.
It is easy to
see that $A$ is indecomposable if and only if its
underlying digraph is strongly connected, that is, for any pair of vertices
$i$ and $j$ there is a directed walk from $i$ to~$j$. As
a consequence of Lemma~\ref{lemma_period_constant}
all vertices of the underlying
graph of an indecomposable matrix~$A$ have the same period. We call this
number the {\em period of the indecomposable matrix $A$}. Given an 
$n \times n$
indecomposable matrix of period $d$, for each $i=1,2,\ldots,n$
there exists an integer $t_{0}(i)$ such that $a^{(k d)}_{i,i}>0$ holds for
all $k\geq t_{0}(i)$. Using
this observation is easy to show the following
theorem. See~\cite[Theorem~6.2.2 and Lemma~6.2.3]{Sachkov_Tarakanov}. 
\begin{theorem}
\label{theorem_indecomposable-block-matrix}
Let $A$ be an $n\times n$ indecomposable nonnegative matrix of period
$d$. If $i$ is a fixed vertex of the digraph $\Gamma(A)$ then for any
other vertex $j$ there is a unique integer $r_{j}$
such that $0 \leq r_{j} \leq d-1$
and the following two statements hold:
\begin{itemize}
\item[(1)] $a^{(s)}_{i,j}>0$ implies $s \equiv r_{j} \bmod d$,
\item[(2)] there is a positive $t(j)$ such that
$a^{(k d + r_{j})}_{i,j}>0$ for
all $k \geq t(j)$.
\end{itemize}
Setting $j\in C_r$ if and only if $r_{j}=r$ provides a partitioning
$\{1,2,\ldots,n\}=\biguplus_{q=0}^{d-1} C_{q-1}$. Replacing $i$ with an
arbitrary fixed vertex results in the same ordered list of subclasses,
up to a cyclic rotation of the indices.
\end{theorem}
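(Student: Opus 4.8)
The plan is to fix the vertex $i$ and, for each vertex $j$, study the set $L_{i,j} = \{ s > 0 : a^{(s)}_{i,j} > 0 \}$ of lengths of directed walks from $i$ to $j$ in $\Gamma(A)$. First I would establish the key arithmetic fact that $L_{i,j}$ is contained in a single residue class modulo $d$. Indeed, suppose $s, s' \in L_{i,j}$. Since $A$ is indecomposable, there is a walk of some length $u$ from $j$ back to $i$, so $s + u$ and $s' + u$ both lie in $L_{i,i}$, the set of lengths of closed walks at $i$. By definition of the period, every element of $L_{i,i}$ is divisible by $d$ (the period is the gcd, and closed walks compose additively, so $L_{i,i}$ is closed under addition and each element is a multiple of $d$ once we know $d \mid \gcd L_{i,i}$—here I would invoke Lemma~\ref{lemma_period_constant} and the standard fact that $d(i) \mid k$ for every $k$ with $a^{(k)}_{i,i}>0$, which follows from the gcd being attained). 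Hence $d \mid (s+u)$ and $d \mid (s'+u)$, so $d \mid (s - s')$. This gives a well-defined residue $r_j \in \{0, 1, \ldots, d-1\}$ with $s \equiv r_j \bmod d$ for all $s \in L_{i,j}$, proving~(1).

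For~(2), I would use the cited observation that for each vertex there exists $t_0(i)$ with $a^{(kd)}_{i,i} > 0$ for all $k \geq t_0(i)$; this is the numerical-semigroup fact that a set of multiples of $d$ with gcd exactly $d$, closed under addition, contains all sufficiently large multiples of $d$. Pick any particular walk from $i$ to $j$ of length $s_j \in L_{i,j}$, so $s_j = q_j d + r_j$ for some $q_j \geq 0$. Concatenating a closed walk at $i$ of length $kd$ with this walk yields a walk from $i$ to $j$ of length $kd + s_j = (k + q_j) d + r_j$, so $a^{((k+q_j)d + r_j)}_{i,j} > 0$ whenever $k \geq t_0(i)$. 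Setting $t(j) = t_0(i) + q_j$ gives $a^{(\ell d + r_j)}_{i,j} > 0$ for all $\ell \geq t(j)$, which is~(2).

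It remains to check that the assignment $j \mapsto r_j$ partitions the vertex set into the classes $C_0, \ldots, C_{d-1}$ (every vertex gets exactly one residue, already shown) and that changing the base vertex $i$ to another vertex $i'$ merely rotates the labels cyclically. For the latter, let $r'_j$ denote the residues computed from $i'$, and fix a walk from $i'$ to $i$ of length $c$. Then any walk from $i$ to $j$ of length $s$ extends to a walk from $i'$ to $j$ of length $c + s$, so $r'_j \equiv c + r_j \bmod d$; that is, all residues shift by the constant $c \bmod d$, which is exactly a cyclic rotation of the index set $\{0, 1, \ldots, d-1\}$, and this rotation is independent of $j$, so the ordered list of subclasses is preserved up to rotation.

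The main obstacle I anticipate is not any single step but making the number-theoretic lemma about sets of multiples of $d$ fully rigorous without simply quoting it: one needs that $L_{i,i}$ is an additively closed set of positive integers whose gcd is $d$, hence (after dividing by $d$) a numerical semigroup, which therefore contains every sufficiently large integer—this is where the Chicken McNugget / Schur-type argument lives. Since the excerpt explicitly grants the existence of $t_0(i)$ via the citation to~\cite[Theorem~6.2.2 and Lemma~6.2.3]{Sachkov_Tarakanov}, I would simply cite that and keep the emphasis on the walk-concatenation bookkeeping and the base-point-change computation, which are the genuinely new ingredients needed downstream.
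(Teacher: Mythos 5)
Your proof is correct, but note that the paper does not actually prove this theorem: it is imported from Sachkov--Tarakanov (\cite[Theorem~6.2.2 and Lemma~6.2.3]{Sachkov_Tarakanov}), with the paper remarking only that it follows ``easily'' from the existence of $t_0(i)$. Your walk-concatenation argument---showing $L_{i,j}$ lies in a single residue class by returning to $i$ and using $d\mid \gcd L_{i,i}$, deducing (2) by prepending long closed walks at $i$, and handling the base-point change by composing with a fixed $i'\to i$ walk of length $c$ so that $r'_j\equiv c+r_j\pmod d$---is precisely the standard argument the paper alludes to, so it matches the intended (though unwritten) route. Two very small points you could tighten: $r_j$ is well-defined only because $L_{i,j}\neq\emptyset$, which uses strong connectivity and should be said explicitly (otherwise condition~(1) is vacuous and $r_j$ is not unique); and the assertion that $d$ divides every element of $L_{i,i}$ is immediate from $d=d(i)=\gcd L_{i,i}$ together with Lemma~\ref{lemma_period_constant} identifying $d(i)$ with the matrix period---you do not actually need additive closure of $L_{i,i}$ for part~(1), only for the semigroup fact underlying the cited existence of $t_0(i)$.
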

Ordering the elements of the set $\{1,2,\ldots,n\}$ in such a way that
the elements of each block $C_q$ form a consecutive sublist results in
a block matrix of the form
\begin{equation}
\label{equation_block-matrix}
A =
\begin{pmatrix}
0 &Q_{0,1} & 0      & \cdots & 0\\
0 & 0     & Q_{1,2} & \cdots & 0\\
\vdots & \vdots & \vdots & \ddots & \vdots \\
0 & 0     & 0      & \cdots & Q_{d-2,d-1}\\
Q_{d-1,0} & 0     & 0 & \cdots & 0\\
\end{pmatrix} ,
\end{equation}
where $Q_{q,q+1}$ occupies the rows
indexed with $C_q$ and the columns indexed by $C_{q+1}$ (here we set
$C_{d} = C_{0}$). The block matrix in~\eqref{equation_block-matrix} is
the {\em canonical form} of the indecomposable matrix $A$ with period
$d$.  By Theorem~\ref{theorem_indecomposable-block-matrix} there
is a $t>0$ such that the canonical form of $A^{t d + 1}$ is similar to the
one given in~\eqref{equation_block-matrix} with the additional property that
all entries in the blocks $Q_{q,q+1}$ are strictly positive.

An $n\times n$  matrix $A$ with nonnegative entries is {\em primitive}
if there is a $\gamma>0$ such that all entries of~$A^{\gamma}$ are
positive. The smallest $\gamma$ with the above property is the {\em
  exponent} of the primitive matrix $A$.
It is straightforward to see that a
nonnegative matrix is primitive if and only if it is indecomposable and
{\em aperiodic}, i.e., its period $d$ equals $1$. There is a quadratic
upper bound on the exponent of a primitive matrix due to Holladay and
Varga~\cite{Holladay_Varga}. See \cite[Theorem~6.2.10]{Sachkov_Tarakanov}.
\begin{theorem}[Holladay--Varga]
\label{theorem_exp-primitive}
The exponent $\gamma$ of an $n\times n$ primitive matrix  satisfies
$$
\gamma \leq n^{2} - 2n + 2.
$$
\end{theorem}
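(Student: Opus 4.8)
This is Wielandt's bound, so the plan is to reconstruct the standard argument, which is most naturally carried out on the underlying digraph $\Gamma = \Gamma(A)$. We may assume $n \geq 2$, the case $n=1$ being immediate since then $n^{2}-2n+2 = 1$ and a primitive $1\times 1$ matrix is a positive scalar.

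First I would reduce to a purely combinatorial statement. By primitivity $A$ is indecomposable, so $\Gamma$ is strongly connected, and aperiodicity means the greatest common divisor of the lengths of all cycles of $\Gamma$ equals $1$. Since $(A^{m})_{i,j}$ counts walks of length $m$ from $i$ to $j$, the exponent $\gamma$ is the least $m$ such that $\Gamma$ has a walk of length exactly $m$ between every ordered pair of vertices. A strongly connected digraph on $n \geq 2$ vertices has no source and no sink, so $A$ has no zero row or column; hence $A^{m} > 0$ forces $A^{m+1} = A A^{m} > 0$, and it suffices to produce, for each ordered pair $(i,j)$, walks of \emph{every} length $\ell \geq n^{2} - 2n + 2$. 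I would also record two elementary facts about $\Gamma$: its shortest cycle has some length $s$ with $1 \leq s \leq n-1$ (a cycle of length $n$ would be Hamiltonian, and any additional edge would create a strictly shorter cycle, so $\Gamma$ would be a single $n$-cycle of period $n$, contradicting primitivity), and from any vertex the distance to the vertex set of a fixed shortest cycle $C_{0}$, as well as the distance from $C_{0}$, is at most $n - s$ (a shortest such path meets $C_{0}$ only at its last, respectively first, vertex).

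The engine is a numerical-semigroup estimate. For a vertex $v$ let $W_{v} \subseteq \Zzz_{\geq 0}$ be the set of lengths of closed walks based at $v$; concatenation shows it is closed under addition. Fixing, for each cycle $C$ through a vertex $w$, paths $v \to w$ and $w \to v$ of lengths $a, b$, we get $a + b \in W_{v}$ and $a + |C| + b \in W_{v}$, so $\gcd(W_{v})$ divides $|C|$; letting $C$ range over all cycles yields $\gcd(W_{v}) = 1$. The crux is a \emph{quantitative} version: taking $v$ on $C_{0}$ one has $s \in W_{v}$, and since the cycle lengths have gcd $1$ while $s \geq 2$ there is a further cycle whose length is not divisible by $s$, which one attaches at $v$ through short connecting paths to enlarge $W_{v}$; iterating this and invoking Frobenius-type (``Chicken McNugget'') bounds for the resulting semigroup, one shows $W_{v}$ contains every integer past a threshold $T$ controlled by $s$ and $n$. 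To finish, for the pair $(i,j)$ I would splice a shortest path $i \to v$ into $C_{0}$, a walk around $C_{0}$ together with a closed walk at $v$ of the length needed to make up the difference, and a shortest path from $C_{0}$ to $j$, choosing the closed-walk length so the total equals any prescribed $\ell \geq n^{2}-2n+2$; the loop case $s=1$ is handled directly by padding at the loop and gives the even smaller bound $2n-2$.

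The step I expect to be the main obstacle is the sharp bookkeeping that turns the above into the precise constant $n^{2}-2n+2 = (n-1)^{2}+1$ rather than a weaker quadratic. Naive splicing loses roughly a factor of two, and one must balance competing costs: the longer the residue-breaking cycle is forced to be, the fewer of the $n$ vertices remain for $C_{0}$ and for the connecting paths, and conversely. Making this trade-off precise — showing the worst configuration over all strongly connected aperiodic $n$-vertex digraphs is exactly the one realizing $(n-1)^{2}+1$, namely girth $n-1$ with the connecting paths as long as possible — is Wielandt's key insight and where real care is needed; the semigroup closure, the distance bounds, and the persistence argument are routine by comparison.
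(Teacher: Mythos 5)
The paper does not prove this theorem; it imports it with citations to Holladay--Varga~\cite{Holladay_Varga} and to \cite[Theorem~6.2.10]{Sachkov_Tarakanov}, so there is no in-paper argument to compare your proposal against. What you have written is a correct high-level outline of the Wielandt approach, and the auxiliary facts you record along the way are all right: the girth $s$ satisfies $1\le s\le n-1$ (a Hamiltonian girth would force the digraph to be a single $n$-cycle, hence periodic); the distance to and from a shortest cycle $C_0$ is at most $n-s$; $A^m>0$ persists for all larger exponents because a strongly connected digraph on $n\ge 2$ vertices has no zero row or column; the closed-walk set $W_v$ is an additive semigroup with $\gcd$ equal to $1$; and the loop case $s=1$ gives $2n-2\le n^2-2n+2$ by padding at the loop.

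However, the proposal does not establish the stated inequality, and the gap is exactly where you flag it. The Frobenius-number route, applied as you sketch, lands on the wrong constant: take $v$ on $C_0$ and a second closed walk at $v$ obtained by detouring through a residue-breaking cycle and back; its length is already of order $n$, so the Frobenius bound for $\{s,\ \text{(length of order } n)\}$, plus the $\le 2(n-s)$ cost of routing $i\to C_0$ and $C_0\to j$, yields a threshold of roughly $n^2-n$ rather than $(n-1)^2+1 = n^2-2n+2$. Even on Wielandt's extremal digraph (an $n$-cycle together with a single chord, girth $n-1$) the naive splice overshoots by about $n$. The sharp constant requires a finer argument than semigroup closure plus gluing shortest paths --- one standard route is to pass to $A^s$, which is again primitive and whose digraph has a loop at each of the $s$ vertices of $C_0$, and then invoke a separate quantitative lemma bounding the exponent of a primitive matrix with a prescribed number of positive diagonal entries. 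That ingredient, which is where $(n-1)^2+1$ actually comes from, is not supplied. In short: correct plan and correct supporting lemmas, but the quantitative core that the theorem consists of is left open, and the Frobenius bookkeeping as described would not reach the stated bound.
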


The matrix operator we are about to introduce will be frequently used
in our paper and makes also stating the next few results easier.
\begin{definition}
Given a matrix $A$ with nonnegative entries, let
$\bin(A)$ denote the binary matrix formed by replacing each
nonzero entry of $A$ with $1$.  We call the resulting matrix the {\em binary
  reduction} of $A$.  
\end{definition}
Thus a matrix $A$ is primitive if there exists a power
$k$ such that $\bin(A^{k}) = J$, where
$J$ is the matrix consisting of all $1$'s.

A generalization of Theorem~\ref{theorem_indecomposable-block-matrix}
may be found in the work of Heap and Lynn~\cite{Heap_Lynn}.
See also~\cite{Ptak,Ptak_Sedlacek,Rosenblatt}.
\begin{theorem}
\label{theorem_decomposable-block-matrix}
Given any non-negative square matrix $A$ there exists
integers $d$ and $\gamma$ such that
$\bin(A^{t+d}) = \bin(A^{t})$ for all $t \geq \gamma$.
\end{theorem}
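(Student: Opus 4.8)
The plan is to reduce the statement about arbitrary nonnegative square matrices to the indecomposable case, which is governed by Theorem~\ref{theorem_indecomposable-block-matrix}, and then to patch the strong components together along the condensation (the acyclic quotient digraph). First I would pass to $\bin(A)$ immediately: the quantity $\bin(A^{t+d})=\bin(A^{t})$ depends only on which walks of a given length exist in $\Gamma(A)$, so we may work entirely in the Boolean semiring, where $\bin(A^{k})$ records exactly which ordered pairs $(i,j)$ are joined by a directed walk of length $k$. Thus it suffices to show: for each ordered pair $(i,j)$ there exist integers $d$ and $\gamma$ (uniform over all pairs, by taking least common multiples and maxima at the end) such that a walk of length $t$ from $i$ to $j$ exists if and only if a walk of length $t+d$ from $i$ to $j$ exists, for all $t\ge\gamma$.

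Next I would analyze a single such pair. Any walk from $i$ to $j$ visits a sequence of strong components $C^{(1)}\to C^{(2)}\to\cdots\to C^{(m)}$ along a directed path in the condensation of $\Gamma(A)$; since the condensation is acyclic and finite, $m$ is bounded and there are only finitely many such component-paths. For a fixed component-path, the length of a walk realizing it decomposes as a sum of ``transit'' lengths through each component plus the lengths of the finitely many bridging edges between consecutive components. Within a single strong component $C$ of period $d_C$, Theorem~\ref{theorem_indecomposable-block-matrix} (applied to the principal submatrix $A[C]$, or to its primitive components after raising to the $d_C$-th power and using Theorem~\ref{theorem_exp-primitive}) tells us that the set of lengths of walks from a fixed entry vertex to a fixed exit vertex of $C$ is, for large enough length, a full residue class modulo $d_C$ — i.e. eventually periodic with period $d_C$. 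The set of achievable total walk-lengths from $i$ to $j$ is therefore a finite union (over component-paths and over choices of entry/exit vertices and bridging edges) of sets each of which is eventually periodic; hence the union is eventually periodic. Taking $d$ to be the least common multiple of all the periods $d_C$ appearing, and $\gamma$ large enough to clear all the thresholds $t_0(\cdot)$ and the finitely many transient terms, gives the claim for that pair, and then maximizing $\gamma$ and taking the lcm of the $d$'s over the finitely many pairs $(i,j)$ finishes the proof.

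I expect the main obstacle to be bookkeeping rather than any deep difficulty: one must be careful that ``eventually periodic with period $d_C$'' for the transit through a component is stated uniformly over the chosen entry and exit vertices of that component, and that when several components with different periods are chained in series, the sumset of two eventually-periodic sets of nonnegative integers is again eventually periodic (with period dividing the lcm of the two periods). This sumset fact is elementary — if $S_1$ is eventually a union of residue classes mod $p_1$ past $\gamma_1$ and $S_2$ likewise mod $p_2$ past $\gamma_2$, then $S_1+S_2$ past $\gamma_1+\gamma_2$ is a union of residue classes mod $\gcd$-related modulus, in fact past a slightly larger threshold it is eventually periodic with period $\gcd(p_1,p_2)$ or any common multiple — but writing it cleanly requires care. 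An alternative, slicker route that avoids the condensation argument is to invoke the classical fact that the sequence of Boolean matrices $\bin(A^{k})$ takes values in the finite monoid of $n\times n$ Boolean matrices, so by pigeonhole $\bin(A^{p})=\bin(A^{q})$ for some $p<q$, whence the sequence is eventually periodic with period $d=q-p$ and preperiod $\gamma=p$; this proves existence of $d$ and $\gamma$ with essentially no computation, at the cost of giving no bound on them. I would present the finite-monoid argument as the short proof and optionally remark that the component analysis above recovers it with control on $d$ (namely $d$ divides $\operatorname{lcm}$ of the component periods).
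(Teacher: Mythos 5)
The paper does not in fact contain a proof of this theorem: it is stated with a citation to Heap and Lynn (and also Pt\'ak, Pt\'ak--Sedl\'a\v{c}ek and Rosenblatt) as a known generalization of Theorem~\ref{theorem_indecomposable-block-matrix}. Your review is therefore against the standard literature argument rather than against a proof in this text. The short ``finite monoid'' argument you present is a complete, correct and genuinely elementary proof: the identity $\bin(A^{k+1})=\bin\bigl(\bin(A^{k})\cdot\bin(A)\bigr)$ shows that the sequence $k\mapsto\bin(A^{k})$ is the orbit of $\bin(A)$ under a self-map of the finite set of $n\times n$ Boolean matrices, so by pigeonhole $\bin(A^{p})=\bin(A^{q})$ for some $p<q$, and then $\bin(A^{t+(q-p)})=\bin(A^{t})$ for all $t\geq p$ by iterating the deterministic recursion. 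This delivers exactly what the theorem asserts with $\gamma=p$ and $d=q-p$, and is the cleanest way to prove the statement as it stands (existence only, no quantitative bound).

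Your longer sketch via the condensation digraph is also sound in outline and is closer in spirit to what Heap--Lynn actually do, since it yields control on the period (namely that $d$ may be taken to divide the lcm of the periods of the strong components) and, with more effort, bounds on the index $\gamma$. The one place where your write-up wobbles is the sumset lemma: for two sets that are each eventually a single residue class modulo $p_1$ and $p_2$ respectively, the sumset is eventually a single residue class modulo $\gcd(p_1,p_2)$ (by Sylvester--Frobenius applied to $\{ap_1+bp_2\}$), which in particular is a divisor of $\operatorname{lcm}(p_1,p_2)$; you state both ``gcd-related modulus'' and ``dividing the lcm,'' which are reconcilable but imprecisely phrased. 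That said, since you ultimately take the lcm over all component periods, the imprecision is harmless. Overall: the short proof is exactly right and supplies a proof the paper itself omits; the long proof is correct in substance and would need only the bookkeeping you yourself flag.
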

The smallest integer $\gamma$ such that
for all $t \geq \gamma$ we have
$\bin(A^{t+d}) = \bin(A^{t})$
is known as the {\em index} of the matrix.
For a primitive matrix this is the exponent.
For estimates on the period $d$ and
the index~$\gamma$, we refer the reader to~\cite{Heap_Lynn}.
Here we only wish to emphasize the following 
immediate generalization of Theorem~\ref{theorem_exp-primitive}.
See~\cite[Equation~(1.4)]{Heap_Lynn}.  
\begin{theorem}
\label{theorem_exp-indecomposable}
Let $A$ be an $n\times n$ indecomposable matrix with nonnegative entries
having period $d$.
An upper bound for the index $\gamma$ of $A$
is $\gamma \leq (q^2-2q+2)d+2r$,
where $n = q d + r$ with $0 \leq r < d-1$.  
\end{theorem}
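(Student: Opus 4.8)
The plan is to reduce the periodic case to the aperiodic one, where Theorem~\ref{theorem_exp-primitive} applies, and then carry out the arithmetic with the class sizes.

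First I would put $A$ into the canonical form~\eqref{equation_block-matrix} provided by Theorem~\ref{theorem_indecomposable-block-matrix}, with classes $C_0,\ldots,C_{d-1}$ of sizes $s_q=|C_q|$, so that $s_0+\cdots+s_{d-1}=n$. Multiplying~\eqref{equation_block-matrix} out shows that $A^d$ is block-diagonal with diagonal blocks $M_q=Q_{q,q+1}Q_{q+1,q+2}\cdots Q_{q+d-1,q+d}$ (indices read modulo $d$, with $C_d=C_0$), each of size $s_q$. Each $M_q$ is indecomposable, since $\Gamma(M_q)$ records $d$-step reachability inside $C_q$ and $\Gamma(A)$ is strongly connected; and it is aperiodic, since a closed walk of length $\ell$ in $\Gamma(M_q)$ lifts to one of length $\ell d$ in $\Gamma(A)$, so the period of $A$ being exactly $d$ forces the gcd of these $\ell$'s to be $1$. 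Hence each $M_q$ is primitive, with exponent at most $s_q^2-2s_q+2$ by Theorem~\ref{theorem_exp-primitive}.

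Next I would connect the index of $A$ to the exponents of the $M_q$. The elementary point is that for nonnegative matrices $\bin(XY)=\bin(\bin(X)\bin(Y))$, so the binary reduction of a product is determined by the binary reductions of its factors. Let $K=\max_q\gamma(M_q)$, where $\gamma(M_q)$ denotes the exponent of $M_q$. Then $\bin((A^d)^{k})=\operatorname{diag}(\bin(M_0^k),\ldots,\bin(M_{d-1}^k))$ equals $\operatorname{diag}(J,\ldots,J)$ for every $k\ge K$, hence is constant in $k$; writing any $t\ge Kd$ as $t=kd+b$ with $0\le b<d$, which forces $k\ge K$, we get $\bin(A^{t+d})=\bin(\bin((A^d)^{k+1})\bin(A^b))=\bin(\bin((A^d)^{k})\bin(A^b))=\bin(A^{t})$. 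Thus the index of $A$ satisfies $\gamma\le d\cdot\max_q\gamma(M_q)\le d\cdot\max_q(s_q^2-2s_q+2)$.

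The remaining task is to bound $d\cdot\max_q(s_q^2-2s_q+2)$, subject to $\sum_q s_q=n$, by $(q^2-2q+2)d+2r$ where $n=qd+r$. This is where the argument stops being automatic, and is the step I expect to be the main obstacle: if one simply bounded each block by its own Holladay--Varga estimate the resulting constant would be too large (already for balanced classes a block of size $q+1$ contributes $q^2+1$ per level), so one must exploit that the extremal primitive digraphs on $s_q$ vertices cannot be realized as a $d$-fold composition $Q_{q,q+1}\cdots Q_{q+d-1,q+d}$ of bipartite incidence graphs unless the intermediate classes $C_{q+1},\ldots,C_{q+d-1}$ are themselves large --- so that a large exponent for one block already consumes most of the vertex budget $n$. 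Turning this trade-off into the sharp constant is precisely the estimate~\cite[Equation~(1.4)]{Heap_Lynn}, whose bookkeeping I would cite rather than redo; everything preceding it is routine.
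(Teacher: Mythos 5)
The paper does not actually prove this statement: it is presented as an ``immediate generalization'' of Theorem~\ref{theorem_exp-primitive} with a bare citation to Heap--Lynn, Equation~(1.4), and no argument is given. So there is no in-paper proof for you to match.

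That said, your supplementary reasoning is correct as far as it goes, and it is a natural way to make the citation transparent. Passing to the canonical form~\eqref{equation_block-matrix} and observing that $A^d$ is block-diagonal with blocks $M_q=Q_{q,q+1}\cdots Q_{q+d-1,q+d}$ is standard; your argument that each $M_q$ is indecomposable (closed walks in $\Gamma(A)$ through $C_q$ have length divisible by $d$) and aperiodic (the gcd of those lengths being exactly $d$ forces $\gcd\{\ell:(M_q^\ell)_{i,i}>0\}=1$) is sound, as is the step $\gamma(A)\le d\cdot\max_q\gamma(M_q)$ via $\bin(XY)=\bin(\bin(X)\bin(Y))$. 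You also correctly flag the obstruction: plugging the Holladay--Varga bound into each block gives only $\gamma\le d\max_q(s_q^2-2s_q+2)$, which exceeds $(q^2-2q+2)d+2r$ as soon as the class sizes are unbalanced (e.g.\ $d=2$, $n=4$, sizes $3,1$ gives $10$ versus the stated $4$). Closing that gap requires exploiting that an extremal primitive $\Gamma(M_q)$ cannot arise from a $d$-fold bipartite composition without forcing the other classes to be large --- which is exactly the bookkeeping in Heap--Lynn that you (and the paper) defer to. In short: both you and the paper ultimately rest on the citation; your preliminaries are correct and clarify why the bound takes the form $d$ times a Holladay--Varga expression, but they do not by themselves reach the stated constant, and you say so. No gap in what you actually claim.
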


Let $\Gamma$ be any digraph on a vertex set $V$ such that its edge set
$E$ is a subset of $V\times V$, i.e., $\Gamma$ may have loops but no
multiple edges. Recall the {\em adjacency matrix} $A$ of $\Gamma$ is a
$|V|\times |V|$ matrix whose rows and columns are indexed by the
vertices. The entry in the row indexed by $u\in V$ and in the column
$v\in V$ is $1$ if $(u,v)\in E$, and zero otherwise. Clearly $\Gamma$
is the underlying graph of its adjacency matrix. We may extend
the above notions of period and aperiodicity from matrices to digraphs
by defining the period of a digraph to be the period of its adjacency
matrix.  See for instance~\cite{Perrin_Schutzenberger}. In particular, a
directed graph is aperiodic if and only if it is 
strongly connected and there is no $k>1$ that divides the length of
every directed cycle.

\section{Level posets}

\begin{definition}
A partially ordered set $P$ is a {\em level poset} if the set of its
elements is of the form $V\times {\mathbb Z}$ for some finite nonempty
set $V$, the projection
onto the second coordinate is a rank function, and for any $u,v\in V$
and $i\in {\mathbb Z}$ we have $(u,i)<(v,i+1)$ if and only if
$(u,0)<(v,1)$ holds.  
\end{definition}

\begin{figure}[t]
\setlength{\unitlength}{1.2mm}
\begin{center}
\begin{picture}(40,50)(0,0)
\put(0,25){
$\displaystyle
          \begin{pmatrix}
              1 & 1 & 1 & 0 \\
              1 & 0 & 1 & 1 \\
              0 & 1 & 0 & 1 \\
              1 & 0 & 1 & 1
          \end{pmatrix}
$}
\end{picture}
\hspace*{25 mm}
\begin{picture}(40,50)(-5,-10)

\multiput(0,0)(0,10){4}{
  \multiput(0,0)(10,0){4}{
    \put(0,0){\circle*{1.5}}
  }
}

\multiput(0,0)(0,10){3}{
  \put(0,0){\line(0,1){10}}
  \put(0,0){\line(1,1){10}}
  \put(10,0){\line(-1,1){10}}
  \put(10,0){\line(1,1){10}}
  \put(20,0){\line(-1,1){10}}
  \put(20,0){\line(1,1){10}}
  \put(30,0){\line(-1,1){10}}
  \put(30,0){\line(0,1){10}}

  \put(0,0){\line(2,1){20}}
  \put(10,0){\line(2,1){20}}
  \put(30,0){\line(-3,1){30}}
}

\multiput(0,30)(0,10){1}{
  \put(0,0){\line(0,1){3}}
  \put(0,0){\line(1,1){3}}
  \put(10,0){\line(-1,1){3}}
  \put(10,0){\line(1,1){3}}
  \put(20,0){\line(-1,1){3}}
  \put(20,0){\line(1,1){3}}
  \put(30,0){\line(-1,1){3}}
  \put(30,0){\line(0,1){3}}

  \put(0,0){\line(2,1){6}}
  \put(10,0){\line(2,1){6}}
  \put(30,0){\line(-3,1){10}}
}

\multiput(0,0)(0,10){1}{
  \put(0,0){\line(0,-1){3}}
  \put(0,0){\line(1,-1){3}}
  \put(10,0){\line(-1,-1){3}}
  \put(10,0){\line(1,-1){3}}
  \put(20,0){\line(-1,-1){3}}
  \put(20,0){\line(1,-1){3}}
  \put(30,0){\line(-1,-1){3}}
  \put(30,0){\line(0,-1){3}}

  \put(0,0){\line(3,-1){10}}
  \put(20,0){\line(-2,-1){6}}
  \put(30,0){\line(-2,-1){6}}
}
\end{picture}
\end{center}
\caption{An adjacency matrix and its associated level poset.
Note that this is an example of a level Eulerian poset.}
\label{figure_main}
\end{figure}
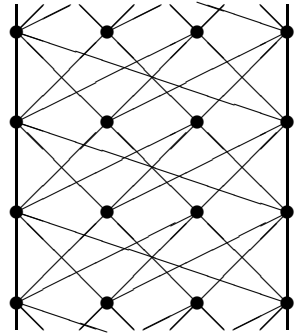

Informally speaking, the Hasse diagram of a level poset
can be thought of 
as a graph containing
a copy of the same vertex set $V$ at each ``level'' such that the portion
of the Hasse diagram containing the edges between elements of rank $i$
and rank $i+1$ may be obtained by vertically shifting the edges in the
Hasse diagram between the elements of rank $0$ and $1$. An example of a
level poset is shown in Figure~\ref{figure_main}. (The meaning of the term
Eulerian in this context will be explained in Section~\ref{sec:Eulerian}.)

Clearly it is sufficient to know the cover relations of the form
$(u,0)\prec (v,1)$ to obtain a complete description of a level
poset. Introducing the digraph $G$ with vertex set $V$ and edge set
$E:=\{(u,v)\in V\times V\::\: (u,0)<(v,1)\}$,  we obtain a digraph
representing a relation $E\subseteq V\times V$ on the vertex set $V$, i.e., a
digraph with no multiple edges but possibly containing loops. We call
$G$ the {\em underlying graph} of the level poset $P$ and $P$ the {\em
  level poset of $G$}. The poset $P$ and the digraph $G$
determine each other uniquely.
\begin{lemma}
\label{lemma_walks}
Let $P$ be a level poset on $V\times {\mathbb Z}$ and let $G$ be its underlying
digraph. Then for all $i,j\in{\mathbb Z}$ and for all $u,v\in V$ we have
$(u,i)<(v,j)$ in $P$ if and only if $i<j$ and there is a walk
$u=u_{0} \rightarrow u_{1} \rightarrowdots u_{j-i}=v$
of length $j-i$ from $u$ to $v$ in $G$.
\end{lemma}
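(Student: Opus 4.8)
The plan is to prove both directions of the equivalence, with the heart of the argument being a translation between cover relations in the level poset and single edges in $G$, extended to arbitrary rank differences by composition. First I would dispose of the easy observations: since the projection onto the second coordinate is the rank function, $(u,i) < (v,j)$ forces $i < j$, and the case $j = i$ is vacuous. For $j - i = 1$, note that by definition $(u,i) \prec (v,i+1)$ in $P$ if and only if $(u,0) < (v,1)$, which by the definition of $E$ is exactly the assertion that $(u,v) \in E$, i.e.\ that there is a walk of length $1$ from $u$ to $v$ in $G$. (Here one uses that a rank-$1$ relation in a graded poset is automatically a cover relation.)

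For the general case I would argue by induction on $j - i$. For the forward direction, suppose $(u,i) < (v,j)$ with $j - i = \ell \geq 2$. Since $P$ is graded with rank function the second coordinate, there is a saturated chain from $(u,i)$ to $(v,j)$, hence in particular some element $(w, i+1)$ with $(u,i) \prec (w,i+1) \leq (v,j)$. By the $\ell = 1$ case, $(u,w) \in E$; by the inductive hypothesis applied to $(w,i+1) \leq (v,j)$ (rank difference $\ell - 1$), there is a walk $w = u_1 \to u_2 \to \cdots \to u_\ell = v$ in $G$. Prepending the edge $u = u_0 \to w = u_1$ yields the desired walk of length $\ell$ from $u$ to $v$. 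For the converse, given a walk $u = u_0 \to u_1 \to \cdots \to u_\ell = v$ in $G$, each edge $(u_k, u_{k+1}) \in E$ gives $(u_k, i+k) \prec (u_{k+1}, i+k+1)$ in $P$ by the level-poset property (vertical shift of the rank-$0$-to-$1$ edges), and chaining these cover relations and using transitivity of $<$ yields $(u,i) < (v,j)$ when $i < j$.

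One subtlety I would be careful about is that the definition of a level poset only stipulates that the second coordinate \emph{is} a rank function, so I should confirm that $P$ genuinely has the structure needed for the saturated-chain step — but this is built into the hypothesis, since a rank function in the sense used in the paper comes with the covering property $\rho(y) - \rho(x) = 1$ for cover relations, and every relation of rank difference one is a cover relation. The main obstacle, such as it is, is purely bookkeeping: making sure the indices on the walk vertices and the ranks in $P$ stay synchronized, and that the "only if" direction does not secretly assume more connectivity in $G$ than is available (it does not — we only ever produce or consume walks, never paths, and multiplicities of edges are irrelevant since $E \subseteq V \times V$). I expect no genuine difficulty; the statement is essentially the observation that the level poset of $G$ is the "infinite iterated composition" of the relation $E$, and the proof is the standard induction establishing that the order relation at rank difference $\ell$ is the $\ell$-th relational power $E^\ell$.
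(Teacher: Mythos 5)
The paper explicitly leaves this verification to the reader, so there is no official proof to compare against; your inductive argument is precisely the routine filling-in the authors had in mind. The proof is correct, and you were right to pause on the one genuine subtlety — that a rank function with the covering property $\rho(y)-\rho(x)=1$ guarantees saturated chains through every intermediate rank in a finite interval, and that covers within an interval $[(u,i),(v,j)]$ are covers in $P$ — which is what licenses the decomposition $(u,i)\prec(w,i+1)\leq(v,j)$ in the inductive step.
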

The straightforward verification is left to the reader.
\begin{remark}
{\rm
By directing all the edges upwards in the Hasse diagram of $P$,
we obtain the {\em (right) derived graph} of the voltage
graph obtained from $G$ by assigning the voltage $1\in {\mathbb Z}$ to
each directed edge. For a
detailed discussion of the theory of voltage graphs, we refer the reader
to the work of Gross and Tucker~\cite{Gross_Tucker}. The
classical theory of voltage graphs focuses on the case where the
voltages belong to a finite group. Here we have to consider
${\mathbb Z}$, that is,
the simplest possible infinite group.
}
\end{remark}

Since the underlying digraph of a level poset $G$ is
uniquely determined by its adjacency matrix, every level poset is
uniquely determined by the adjacency matrix of its underlying
digraph. For brevity, we will use the term {\em underlying
matrix $M$} for ``adjacency matrix of the underlying digraph'' of a level
poset $P$, and the term {\em level poset of $M$} for
``level poset of the digraph whose adjacency matrix is~$M$''. The order
of the rows and columns of the
adjacency matrix corresponds to the 
order of vertices at the same level read from the left to the
right in a Hasse diagram of the corresponding level
poset. For any square matrix $M$ whose rows and columns are
indexed with elements of a set $V$, we will use the notation $M_{u,v}$
for the entry in the row indexed by $u\in V$ and in the column indexed
by $v\in V$.

Using Lemma~\ref{lemma_walks} we may describe the partial order of $P$ in
terms of its underlying matrix $M$ as follows.
\begin{corollary}
\label{corollary_walks}
Given a level poset $P$ with underlying matrix $M$, we have 
$(u,i)<(v,j)$ in $P$ if and only if $i<j$ and $M^{j-i}_{u,v}>0$ hold.
\end{corollary}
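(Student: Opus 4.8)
The plan is to deduce the statement directly from Lemma~\ref{lemma_walks} together with the standard combinatorial interpretation of powers of a nonnegative matrix. Recall that for the underlying digraph $G$ of the level poset $P$, the adjacency matrix $M$ has the property that, for any nonnegative integer $k$, the entry $M^{k}_{u,v}$ counts the number of directed walks of length $k$ from $u$ to $v$ in $G$; this is the usual proof by induction on $k$, expanding $M^{k} = M^{k-1}\cdot M$ and observing that a walk of length $k$ from $u$ to $v$ is a walk of length $k-1$ from $u$ to some $w$ followed by an edge $w\rightarrow v$. In particular, since walk counts are nonnegative integers, $M^{k}_{u,v}>0$ holds if and only if there exists at least one directed walk of length $k$ from $u$ to $v$ in $G$.

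First I would invoke Lemma~\ref{lemma_walks}: for $u,v\in V$ and $i,j\in\mathbb{Z}$, the relation $(u,i)<(v,j)$ in $P$ holds precisely when $i<j$ and there is a walk $u=u_{0}\rightarrow u_{1}\rightarrowdots u_{j-i}=v$ of length $j-i$ in $G$. Then I would apply the walk-counting interpretation with $k=j-i$ (which is a positive integer exactly when $i<j$): the existence of such a walk is equivalent to $M^{j-i}_{u,v}>0$. Combining the two equivalences yields that $(u,i)<(v,j)$ in $P$ if and only if $i<j$ and $M^{j-i}_{u,v}>0$, which is the claim.

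There is essentially no obstacle here; the corollary is a routine translation of Lemma~\ref{lemma_walks} from the language of walks to the language of matrix powers. The only point worth stating carefully is that $M$ has nonnegative (indeed, $\{0,1\}$) entries, so that no cancellation can occur in forming $M^{j-i}$, and hence positivity of the entry is genuinely equivalent to the existence — not merely a signed count — of a walk. This is exactly the setting of Section~2.3, where all matrices under consideration are nonnegative, so the hypothesis is automatically in force.
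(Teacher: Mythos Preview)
Your proposal is correct and matches the paper's approach: the paper states this corollary immediately after Lemma~\ref{lemma_walks} with no proof, treating it as the obvious translation from walks in $G$ to positivity of entries in powers of the adjacency matrix. Your write-up simply makes explicit the standard walk-counting interpretation of $M^{k}$ that the paper leaves implicit.
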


Using the operation $\bin$ we may rephrase Corollary~\ref{corollary_walks} as
follows.
\begin{corollary}
Given a level poset $P$ with underlying matrix $M$, we have 
$(u,i)<(v,j)$ in $P$ if and only if $i<j$ and $\bin(M^{j-i})_{u,v}=1$ hold.
\end{corollary}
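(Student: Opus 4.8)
The plan is to obtain this statement as an immediate reformulation of Corollary~\ref{corollary_walks} using only the definition of the binary reduction operator $\bin$. First I would recall that, since $M$ is the adjacency matrix of a digraph, all of its entries lie in $\{0,1\}$, and hence every power $M^{k}$ with $k\geq 1$ has nonnegative integer entries; in particular $\bin(M^{j-i})$ is defined whenever $i<j$. Next I would invoke the defining property of $\bin$: for a matrix $N$ with nonnegative entries, the entry $\bin(N)_{u,v}$ equals $1$ if $N_{u,v}>0$ and equals $0$ if $N_{u,v}=0$, so in all cases $\bin(N)_{u,v}=1$ holds if and only if $N_{u,v}>0$.

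Applying this with $N=M^{j-i}$ and chaining the equivalences, $(u,i)<(v,j)$ in $P$ holds if and only if $i<j$ and $M^{j-i}_{u,v}>0$ (by Corollary~\ref{corollary_walks}), which in turn holds if and only if $i<j$ and $\bin(M^{j-i})_{u,v}=1$. This completes the argument. I expect no genuine obstacle here; the only point deserving a sentence of care is that $\bin$ was introduced only for matrices with nonnegative entries, so one should explicitly note the nonnegativity of $M$ and of all its positive powers before writing $\bin(M^{j-i})$.
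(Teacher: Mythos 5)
Your argument is correct and matches the paper's (implicit) reasoning exactly: the paper simply notes that this corollary is a rephrasing of Corollary~\ref{corollary_walks} via the defining property of $\bin$, which is precisely the chain of equivalences you spell out.
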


Clearly the underlying digraph of a level poset is strongly connected if
and only if the underlying matrix $M$ is indecomposable. Equivalently,
for any pair of vertices $u,v\in V$ such that $u\neq v$ there is a $p>0$ such
that the adjacency matrix $M$ satisfies $\bin(M^{p})_{u,v}=1$.
If $|V|>1$ then the adjacency matrix $M$ of a
strongly connected digraph must also satisfy $\bin(M^{p})_{u,u}=1$ for some
$p$, given any $u\in V$. 

Having a strongly connected underlying digraph is
neither a necessary nor sufficient condition for the Hasse diagram of a
level poset (considered as an undirected graph) to be a connected
graph. An example of a connected level poset whose underlying digraph is
not strongly connected is the level poset with the underlying adjacency
matrix
$M=
\begin{pmatrix}
0 & 1 \\ 0 & 1
\end{pmatrix}$.
For level posets with strongly connected underlying digraphs, a 
necessary and sufficient condition for the connectivity of their Hasse
diagram may be stated using the notion of aperiodic
graphs.
A digraph on $n$ vertices is {\em aperiodic} if and only if
the underlying adjacency matrix $M$ is primitive.
\begin{theorem}
Assume that $P$ is the level poset of a strongly connected digraph
$G$. Then the Hasse diagram of $P$ is connected if and only
$G$ is aperiodic. 
\end{theorem}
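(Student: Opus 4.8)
The plan is to prove the two implications separately, translating throughout between paths in the (undirected) Hasse diagram of $P$ and walks in the underlying digraph $G$; write $M$ for the underlying matrix of $P$. Recall from Lemma~\ref{lemma_walks} that the cover relations of $P$ --- equivalently, the edges of its Hasse diagram --- are exactly the pairs $(u,i)\prec(v,i+1)$ with $M_{u,v}>0$, so that such an edge is a directed edge of $G$ traversed upward or downward. Two observations will drive both directions. First, if $(x,i)<(y,j)$ in $P$ then, again by Lemma~\ref{lemma_walks}, a walk of length $j-i$ from $x$ to $y$ in $G$ yields a saturated chain from $(x,i)$ to $(y,j)$, which is a path in the Hasse diagram; hence comparable elements of $P$ lie in the same connected component. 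Second, by Corollary~\ref{corollary_walks}, for $k>0$ the elements $(u,i)$ and $(v,i+k)$ are comparable precisely when $M^{k}_{u,v}>0$.

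\emph{Hasse diagram connected implies $G$ aperiodic.} I argue by contraposition: suppose $G$, though strongly connected, has period $d\geq 2$. By Theorem~\ref{theorem_indecomposable-block-matrix} there is a partition $V=C_{0}\uplus\cdots\uplus C_{d-1}$ into nonempty blocks such that every edge of $G$ runs from some $C_{q}$ to $C_{q+1}$, indices taken modulo $d$. Define $\phi\colon V\times\mathbb{Z}\to\mathbb{Z}/d\mathbb{Z}$ by $\phi(u,i)=q-i\bmod d$ when $u\in C_{q}$. If $(u,i)\prec(v,i+1)$ is an edge of the Hasse diagram then $u\to v$ is an edge of $G$, so $v\in C_{q+1}$ when $u\in C_{q}$, whence $\phi(v,i+1)=(q+1)-(i+1)=q-i=\phi(u,i)$ in $\mathbb{Z}/d\mathbb{Z}$. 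Thus $\phi$ is constant on every connected component of the Hasse diagram; since $\phi$ is surjective (fix $u_{0}\in C_{0}$ and let $i$ vary), the Hasse diagram has at least $d\geq 2$ components and is disconnected. As $G$ is strongly connected, aperiodicity is equivalent to having period $1$, so this is exactly the contrapositive of the implication.

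\emph{$G$ aperiodic implies Hasse diagram connected.} Since $M$ is primitive there is a $\gamma$ with all entries of $M^{\gamma}$ positive; as every vertex of $G$ has an out-edge, $M^{\gamma+1}=M M^{\gamma}$ also has all entries positive, and inductively so does every $M^{k}$ with $k\geq\gamma$. Fix a level $i$. Applying the two observations above with $k=\gamma$: for all $u,v\in V$ the elements $(u,i)$ and $(v,i+\gamma)$ are joined in the Hasse diagram, and also $(v,i)$ and $(w,i+\gamma)$ are joined for all $v,w\in V$; chaining through a common vertex of level $i+\gamma$ shows that the sets $V\times\{i\}$ and $V\times\{i+\gamma\}$ together lie in a single component, call it $K_{i}$. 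Repeating with $k=\gamma+1$ shows $V\times\{i\}$ and $V\times\{i+\gamma+1\}$ lie in a single component; as this component and $K_{i}$ share the vertices $V\times\{i\}$, it equals $K_{i}$, so $V\times\{i+\gamma\}$ and $V\times\{i+\gamma+1\}$ lie in $K_{i}$. Since $i$ was arbitrary, $V\times\{j\}$ and $V\times\{j+1\}$ lie in a common component for every $j$, and therefore the whole Hasse diagram is connected.

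The verifications that $\phi$ is edge-invariant and that the asserted pairs of elements are comparable are routine. The step needing the most care is the construction of the potential $\phi$ in the periodic case: one must invoke the cyclic block structure of Theorem~\ref{theorem_indecomposable-block-matrix} and combine the block index $q$ with the level coordinate in precisely the right way, so that periodicity of $G$ genuinely obstructs connectivity of the vertically doubled structure. Once that is in hand --- and once one observes that comparability in $P$ forces membership in the same component of the Hasse diagram --- both directions reduce to bookkeeping.
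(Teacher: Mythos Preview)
Your proof is correct and follows the same overall strategy as the paper. For the forward direction both arguments use primitivity of $M$ to connect arbitrary elements through a sufficiently high level; the paper walks both $(u,i)$ and $(v,j)$ up to a common element $(u,\max(i,j)+p)$, whereas you show that consecutive levels $V\times\{j\}$ and $V\times\{j+1\}$ share a component---minor bookkeeping variants of one idea. For the backward direction the paper likewise appeals to the cyclic block structure, colouring $V$ by block index and lifting the colouring to $P$, but its concluding sentence (``there is no walk between elements of the same color in the Hasse diagram of $P$'') is at best imprecisely stated, since for instance $(u,0)$ and $(u,d)$ have the same colour and are typically joined. Your potential $\phi(u,i)=q-i\bmod d$ makes the intended argument rigorous: it is precisely the combination of block index and level that is constant along Hasse edges, and its surjectivity forces at least $d$ components. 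Your write-up is, if anything, the cleaner of the two on this half.
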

\begin{proof}
Assume first $G$ is aperiodic and that its adjacency matrix $M$
satisfies $\bin(M^{p})=J$. Clearly $\bin(M^{n})=J$ for all
$n\geq p$. Given any $(u,i)$ and $(v,j)$ in $P$ there is a directed walk of
length $p+\max(i,j)-i$ from $u$ to $u$ and a directed walk of
length $p+\max(i,j)-j$ from $v$ to $u$ in~$G$. The first walk lifts to a
walk from $(u,i)$ to $(u,\max(i,j)+p)$ in the Hasse diagram of $P$,
whereas the second walk lifts to a walk from $(v,j)$ to
$(u,\max(i,j)+p)$ in the Hasse diagram. Thus we may walk from $(u,i)$ to
$(v,j)$ by first walking along the edges of the walk from $(u,i)$ to
$(u,\max(i,j)+p)$ and then following the edges of the walk from $(v,j)$ to
$(u,\max(i,j)+p)$ backwards.

Assume next that $G$ is not aperiodic. Let $k>1$ be an integer dividing
the length of every cycle. It is easy to see that
we may color the vertex set $V$ of $G$ using $k$ colors in such a way
that $(u,v)$ is an edge only if $u$ and $v$ have different colors. This
coloring may be lifted to the Hasse diagram of $P$ by setting the color
of $(u,i)$ to be the color of $u$ for each $(u,i)\in P$. There is no
walk between elements of the same color in the Hasse diagram of $P$.
\end{proof}

Let $\alpha = (\alpha_{1}, \ldots, \alpha_{r})$ be a composition
of~$m$, that is,
$\alpha_{1}, \ldots, \alpha_{r}$ are positive integers
whose sum is~$m$.
Let $S$ be the associated subset
of $\{1, \ldots, m-1\}$, that is,
$$ S = \{\alpha_{1}, \alpha_{1}+\alpha_{2},
             \ldots,
         \alpha_{1} + \cdots + \alpha_{r-1}\}  . $$
The flag $f$-vector entry $S$ of any interval
$[(u,i),(v,i+m)] \cong [(u,0),(v,m)]$ in
a level poset may computed using its underlying adjacency matrix as
follows.
\begin{lemma}
\label{lemma_flag_f}
Let $P$ be a level poset whose underlying digraph has vertex set $V$
of cardinality $n$.
Let $F_{S}$ be the $n \times n$ matrix whose $(u,v)$ entry is
$f_{S}([(u,0),(v,m)])$ if $(u,0) \leq (v,m)$ in $P$
and $0$ otherwise. Then the matrix~$F_{S}$
is given by
$$   F_{S} = \bin(M^{\alpha_{1}}) \cdot
             \bin(M^{\alpha_{2}}) \cdots
             \bin(M^{\alpha_{r}})              , $$
where $(\alpha_{1}, \ldots, \alpha_{r})$
is the composition associated with the subset
$S \subseteq \{1, \ldots, m-1\}$.
\end{lemma}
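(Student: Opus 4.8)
The plan is to prove the formula by induction on the number of parts $r$ of the composition, with the heart of the matter being the bijective interpretation of matrix multiplication in terms of chains. First I would handle the base case $r=1$, where $S=\emptyset$ and $\alpha=(m)$. Here $f_\emptyset([(u,0),(v,m)])$ counts the maximal chains in the rank-selected poset $[(u,0),(v,m)]_\emptyset$, which has only a bottom and top element, so this number is $1$ precisely when $(u,0)\le(v,m)$ and $0$ otherwise. By Corollary~\ref{corollary_walks}, or equivalently its $\bin$-reformulation, this is exactly the $(u,v)$ entry of $\bin(M^m)$, establishing the base case.

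For the inductive step I would split the composition as $\alpha=(\alpha_1,\ldots,\alpha_{r-1},\alpha_r)$ and write $\beta=(\alpha_1,\ldots,\alpha_{r-1})$, a composition of $m'=m-\alpha_r$ associated with the subset $S'=S\cap\{1,\ldots,m'-1\}=\{\alpha_1,\alpha_1+\alpha_2,\ldots,\alpha_1+\cdots+\alpha_{r-2}\}$. The largest element of $S$ is $m'=\alpha_1+\cdots+\alpha_{r-1}$. A maximal chain in $[(u,0),(v,m)]_S$ passes through exactly one element $(w,m')$ of rank $m'$, and it restricts to a maximal chain in $[(u,0),(w,m')]_{S'}$ together with the single cover relation from $(w,m')$ up to $(v,m)$ in the rank-selected interval (since $m'$ is the top selected rank below $m$). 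Conversely any such pair glues to a maximal chain in $[(u,0),(v,m)]_S$. This gives the counting identity
\begin{equation}
f_S([(u,0),(v,m)]) = \sum_{w\in V} f_{S'}([(u,0),(w,m')]) \cdot [\,(w,m')<(v,m)\,],
\end{equation}
where the bracket is $1$ if the relation holds and $0$ otherwise. Here one must be slightly careful about the degenerate cases $w=u$ with $m'=0$ (only when $r=1$, already handled) and the fact that $(w,m')<(v,m)$ in the rank-selected poset is the same as $(w,m')<(v,m)$ in $P$ since $m'$ and $m$ are adjacent selected ranks; one should also note that whenever $(u,0)\le(w,m')$ and $(w,m')\le(v,m)$ both hold, transitivity gives $(u,0)\le(v,m)$, so the nonzero terms are consistent with $F_S$ being supported on the relation $(u,0)\le(v,m)$.

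Now I would translate the displayed identity into matrix language. The bracket $[(w,m')<(v,m)]$ is, by Corollary~\ref{corollary_walks} and its $\bin$-version, the $(w,v)$ entry of $\bin(M^{m-m'})=\bin(M^{\alpha_r})$, and $f_{S'}([(u,0),(w,m')])$ is the $(u,w)$ entry of $F_{S'}$. Hence $F_S = F_{S'}\cdot \bin(M^{\alpha_r})$. Applying the induction hypothesis to $\beta$ and $S'$ gives $F_{S'}=\bin(M^{\alpha_1})\cdots\bin(M^{\alpha_{r-1}})$, and substituting yields $F_S=\bin(M^{\alpha_1})\cdots\bin(M^{\alpha_{r-1}})\bin(M^{\alpha_r})$, completing the induction. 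The main obstacle is purely bookkeeping: getting the correspondence between the subset $S$ and the composition $\alpha$ right at the truncation step, and making sure the ``last block'' of the product corresponds to the top gap between rank $m'$ and rank $m$ rather than to some interior gap; once the chain-splitting bijection in the displayed equation is stated cleanly, the matrix identity is immediate.
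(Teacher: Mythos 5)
The paper states this lemma without proof, treating it as an immediate consequence of Corollary~\ref{corollary_walks} and the definition of the rank-selected subposet; there is therefore no paper argument to compare against. Your proof is correct: the base case $F_\emptyset=\bin(M^m)$ is exactly the $\bin$-version of Corollary~\ref{corollary_walks}, the chain-splitting at the top selected rank $m'=\alpha_1+\cdots+\alpha_{r-1}$ is a clean bijection, and translating the resulting counting identity into the matrix product $F_S=F_{S'}\cdot\bin(M^{\alpha_r})$ closes the induction. The only thing I would add is that the induction is not strictly necessary: one can directly enumerate maximal chains in $[(u,0),(v,m)]_S$ as sequences $(u,0)<(w_1,s_1)<\cdots<(w_{r-1},s_{r-1})<(v,m)$ with $S=\{s_1<\cdots<s_{r-1}\}$, and the sum over $(w_1,\ldots,w_{r-1})\in V^{r-1}$ of the product of indicators given by Corollary~\ref{corollary_walks} is, by definition of matrix multiplication, the $(u,v)$ entry of $\bin(M^{\alpha_1})\cdots\bin(M^{\alpha_r})$. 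Your inductive argument is just the recursive form of this observation, so both are equally valid; your write-up is sound.
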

Note that every interval $[(u,0),(v,m)]$ in $P$ is isomorphic to all
intervals of the form $[(u,i),(v,i+m)]$ where $i\in {\mathbb Z}$
is an arbitrary integer.

\section{Level Eulerian posets}
\label{sec:Eulerian}

\begin{definition}
\label{definition_level_Eulerian}
We call a level poset $P$ a {\em level Eulerian poset} if every interval
is Eulerian.
\end{definition}
As a consequence of Lemma~\ref{lemma_flag_f}
we have the following condition for Eulerianness.
\begin{lemma}
A level poset is Eulerian
if and only if its adjacency matrix $M$
satisfies
\begin{equation}
   \sum_{i=0}^{p} (-1)^{i} \cdot \bin(M^{i}) \cdot \bin(M^{p-i}) = 0 
\label{equation_level_Eulerian}
\end{equation}
holds for all $p \geq 1$.
\label{lemma_level_Eulerian}
\end{lemma}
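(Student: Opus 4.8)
The plan is to combine the flag $f$-vector description from Lemma~\ref{lemma_flag_f} with the definition of Eulerianness for intervals, and then observe that the Euler--Poincar\'e relation for a single interval is exactly one entry of a matrix identity. First I would recall that for an interval $[(u,0),(v,p)]$ of rank $p$ in a level poset, the M\"obius-type relation $\sum_{(u,0)\leq z\leq (v,p)}(-1)^{\rho(z)}=0$ can be rewritten, after peeling off the contributions of the endpoints, as a statement about the numbers of elements at each intermediate rank $i$ that lie above $(u,0)$ and below $(v,p)$. The key point is that the number of such elements of rank $i$ is precisely the $(u,v)$ entry of $\bin(M^{i})\cdot\bin(M^{p-i})$: an element $(w,i)$ satisfies $(u,0)<(w,i)$ and $(w,i)<(v,p)$ if and only if there is a walk of length $i$ from $u$ to $w$ and a walk of length $p-i$ from $w$ to $v$, which by Corollary~\ref{corollary_walks} means $\bin(M^{i})_{u,w}=1$ and $\bin(M^{p-i})_{w,v}=1$.

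Next I would handle the boundary terms carefully. Summing $(-1)^{i}$ times the count of rank-$i$ intermediate elements over $i=1,\dots,p-1$ gives $\sum_{i=1}^{p-1}(-1)^{i}\bigl(\bin(M^{i})\bin(M^{p-i})\bigr)_{u,v}$. To the whole Euler sum we must also add the endpoints: $(-1)^{0}=1$ for $(u,0)$ and $(-1)^{p}$ for $(v,p)$, each contributing exactly when the interval is nonempty, i.e., when $(u,0)\leq(v,p)$, which is exactly when $\bin(M^{p})_{u,v}=1$. Since $\bin(M^{0})=I$, the $i=0$ and $i=p$ terms of $\sum_{i=0}^{p}(-1)^{i}\bin(M^{i})\bin(M^{p-i})$ are $I\cdot\bin(M^{p})$ and $(-1)^{p}\bin(M^{p})\cdot I$, whose $(u,v)$ entries are $\bin(M^{p})_{u,v}$ and $(-1)^{p}\bin(M^{p})_{u,v}$ respectively. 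Thus the $(u,v)$ entry of the left-hand side of~\eqref{equation_level_Eulerian} equals exactly the Euler characteristic sum $\sum_{(u,0)\leq z\leq(v,p)}(-1)^{\rho(z)}$ for the interval $[(u,0),(v,p)]$, with the convention that this entry is $0$ when $(u,0)\not\leq(v,p)$ (both sides vanish in that case).

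It then remains to assemble the equivalence. If $P$ is Eulerian, every interval of rank $\geq 1$ satisfies the Euler--Poincar\'e relation, so every $(u,v)$ entry of~\eqref{equation_level_Eulerian} vanishes for every $p\geq 1$; conversely, if~\eqref{equation_level_Eulerian} holds for all $p\geq 1$, then every interval $[(u,0),(v,p)]$ of rank $p\geq 1$ is Eulerian, and since by the remark following Lemma~\ref{lemma_flag_f} every interval of $P$ is isomorphic to one of the form $[(u,0),(v,p)]$, the poset $P$ is Eulerian. I should also note that for $p=1$ the identity reads $\bin(M)+(-1)\bin(M)=0$, which is automatic and matches the fact that rank-one intervals are trivially Eulerian; the content is in $p\geq 2$.

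I do not expect a serious obstacle here: the proof is essentially bookkeeping, and the only place to be careful is the treatment of the two endpoint terms and the convention that off-support entries are zero, so that the matrix identity genuinely encodes the Euler relation for \emph{every} interval simultaneously rather than just those with $(u,0)\leq(v,p)$. The one substantive input is the identification of intermediate-rank element counts with entries of $\bin(M^{i})\bin(M^{p-i})$, which is immediate from Lemma~\ref{lemma_walks} and Corollary~\ref{corollary_walks}.
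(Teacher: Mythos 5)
Your argument is correct and follows the same route the paper intends: the lemma is stated in the paper as an immediate consequence of Lemma~\ref{lemma_flag_f}, and your proof simply unpacks that reduction by identifying the $(u,v)$-entry of $\bin(M^{i})\bin(M^{p-i})$ with the number of rank-$i$ elements of the interval $[(u,0),(v,p)]$ (including the endpoint terms via $\bin(M^{0})=I$), and noting that when $(u,0)\not\leq(v,p)$ every summand vanishes since a walk $u\to w\to v$ of lengths $i$ and $p-i$ concatenates to a length-$p$ walk. Your bookkeeping of the $i=0$, $i=p$ terms and the reduction of all intervals to the form $[(u,0),(v,p)]$ by translation invariance are both sound, so the proposal matches the paper's proof in substance.
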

As it was noted in~\cite[Lemma~4.4]{Ehrenborg_k-Eulerian}
and~\cite[Lemma~2.6]{Ehrenborg_Readdy_b},
a graded poset of
odd rank is Eulerian if all of its proper intervals
are Eulerian. Thus it suffices to verify the condition
in Lemma~\ref{lemma_level_Eulerian} for even integers $p$. As a consequence
of Theorem~\ref{theorem_decomposable-block-matrix}, equation
\eqref{equation_level_Eulerian} only needs to be verified for
finitely-many values of $p$.
\begin{theorem}
Let $P$ be the level poset of an $n\times n$
indecomposable matrix $M$ with period $d$ and
index~$\gamma$.
Then $P$ is level Eulerian if and only if $M$ satisfies
the Eulerian condition~\eqref{equation_level_Eulerian}
for $p<2\gamma+4d$. For odd $d$, the bound for $p$ may be improved to 
$p<2\gamma+2d$. 
\label{theorem_Euler_bound}
\end{theorem}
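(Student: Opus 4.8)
The plan is to reduce the (a priori infinite) family of identities in Lemma~\ref{lemma_level_Eulerian} to a bounded range of $p$ by exploiting the periodicity $\bin(M^{t+d}) = \bin(M^{t})$ for $t \geq \gamma$ from Theorem~\ref{theorem_decomposable-block-matrix}, together with the parity reduction already noted (it suffices to check even $p$). Write $B_i := \bin(M^i)$ and, for $p \geq 1$, set
$$ E_p := \sum_{i=0}^{p} (-1)^i B_i B_{p-i}. $$
The goal is to show that if $E_p = 0$ for all even $p$ in a suitable bounded range, then $E_p = 0$ for all $p$. First I would dispose of the odd case: as cited, a graded poset of odd rank is Eulerian whenever all its proper intervals are, so $E_p$ for odd $p$ is forced once $E_{p'}=0$ for all smaller $p'$; alternatively one can observe this directly from the involution $i \mapsto p-i$ on the index set, which when $p$ is odd pairs up terms of opposite sign. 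So attention reduces to even $p$.

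Next I would compare $E_{p+d}$ with $E_p$ for $p \geq 2\gamma$ (or the appropriate threshold). The idea is that when $p$ is large, every index $i$ in the sum for $E_{p+d}$ has either $i \geq \gamma$ or $p+d-i \geq \gamma$ (since if both $i < \gamma$ and $p+d-i<\gamma$ then $p+d < 2\gamma$). Splitting the sum at $i \approx (p+d)/2$ and using $B_{i+d} = B_i$ on the appropriate half, one rewrites the sum for $E_{p+d}$ so that it telescopes against $E_p$ up to a correction supported on a window of width $O(d)$ of ``middle'' indices where both $i$ and $p+d-i$ are at least $\gamma$ — there $B_i = B_{i \bmod' }$ is genuinely periodic and the middle block collapses. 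Carrying this bookkeeping out carefully should yield a recursion of the shape $E_{p+d} = E_p$ (as matrices) once $p \geq 2\gamma$, or more precisely an identity expressing $E_{p+2d}$ in terms of $E_p$, $E_{p+d}$ and already-known vanishing terms. This is the step I expect to be the main obstacle: getting the index ranges exactly right so that the ``overlap'' region is clean, and accounting for the sign $(-1)^i$ versus $(-1)^{i+d}$ — this is precisely why the bound splits into the $d$ even and $d$ odd cases, since $(-1)^d = 1$ only when $d$ is even, and when $d$ is odd one pays an extra factor of $2$ somewhere in the window, shrinking the savings from $4d$ to $2d$.

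Having established such a recursion, the theorem follows: if $E_p = 0$ for all $p$ (even) with $p < 2\gamma + 4d$ (resp.\ $p < 2\gamma+2d$ for odd $d$), then the recursion propagates vanishing to all larger $p$ by induction on $p$, using that each new $E_p$ is a $\Zzz$-linear combination of $E_{p'}$ with $p'$ in the already-verified range shifted by multiples of $d$. One should double-check the base of the induction covers enough residues modulo $d$ and enough parities — this is the reason the constant is $4d$ rather than $d$: one needs both parities of $p$ and a full period's worth of ``seed'' values on each side of the telescoping identity. Finally I would combine with Lemma~\ref{lemma_level_Eulerian} to conclude that $P$ is level Eulerian, and remark that the odd-$d$ improvement uses the aperiodicity-flavored fact that for odd $d$ the canonical block form of $M^{td+1}$ (Theorem~\ref{theorem_indecomposable-block-matrix}) makes one of the two boundary windows redundant.
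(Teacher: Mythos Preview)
Your overall strategy---use the periodicity $B_{i+d}=B_i$ for $i\geq\gamma$ to derive a recursion among the $E_p$, then propagate vanishing by induction---is exactly the paper's approach. But several of your details are backward, and one is simply wrong.

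First, the shift to analyze is $p\mapsto p+2d$, not $p\mapsto p+d$: since only even $p$ matter and the recursion must stay within even $p$, shifting by an odd $d$ would mix parities. The paper computes, for $p\geq 2\gamma$,
\[
E_{p+2d}-E_p=\sum_{i=\gamma}^{\gamma+2d-1}(-1)^i\,B_i\,B_{p+2d-i},
\]
by cancelling the $i<\gamma$ terms (where $B_{p+2d-i}=B_{p-i}$) and reindexing the $i\geq\gamma$ terms (where $B_{i+2d}=B_i$). Second, you have the odd/even roles inverted. When $d$ is \emph{odd}, this difference vanishes identically: pair $i$ with $i+d$ and use $(-1)^{i+d}=-(-1)^i$ together with $B_{i+d}=B_i$ and $B_{p+2d-i}=B_{p+d-i}$. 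That is the entire odd-$d$ argument---there is nothing about canonical block forms or boundary windows, and your final sentence invoking Theorem~\ref{theorem_indecomposable-block-matrix} is off the mark. When $d$ is \emph{even}, the difference does not vanish, but one observes that the right-hand side is itself $d$-periodic in $p$ (once $p\geq 2\gamma+2d$), giving the second-order recursion $E_{p+2d}-E_p=E_p-E_{p-2d}$. This requires \emph{two} consecutive seed values (mod $2d$) to be zero, which is why the even-$d$ bound is $2\gamma+4d$ rather than $2\gamma+2d$: odd $d$ is the good case, not the one where you ``pay an extra factor.''
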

\begin{proof}
We introduce $\Sigma(p)$ as a shorthand for $\sum_{i=0}^{p} (-1)^{i}
\cdot \bin(M^{i}) \cdot \bin(M^{p-i})$.
We wish to calculate 
$\Sigma(p+2d)-\Sigma(p)$ for an arbitrary $p\geq 2\gamma$. 

For $i=0,1,\ldots,\gamma-1$, the term $(-1)^{i} \cdot \bin(M^{i})
\cdot \bin(M^{p-i})$ in $\Sigma (p)$ cancels with the term $(-1)^{i} \cdot
\bin(M^{i}) \cdot \bin(M^{p+2d-i})$ in $\Sigma(p+2d)$
since $i<\gamma (d)$ and $p\geq
2\gamma$ imply $p-i\geq \gamma$. 
For $i=\gamma, \gamma+1,\ldots, p$, the term $(-1)^{i} \cdot
\bin(M^{i}) \cdot \bin(M^{p-i})$ in $\Sigma (p)$
cancels with the term
$(-1)^{i} \cdot \bin(M^{i+2d}) \cdot \bin(M^{p-i})$ in $\Sigma (p+2d)$
since $\bin(M^{i})=\bin(M^{i+2d})$ for $i\geq
\gamma$.   
After these cancellations, we obtain
\begin{equation}
\label{equation_p2d}
\Sigma(p+2d)-\Sigma(p)=\sum_{i=\gamma}^{\gamma+2d-1} (-1)^{i}
\cdot \bin(M^{i}) \cdot \bin(M^{p+2d-i})\quad\mbox{for $p\geq 2\gamma$}. 
\end{equation} 
If $d$ is odd, then the right-hand side of~\eqref{equation_p2d} is
zero. Indeed, for each $i$ satisfying $\gamma\leq i\leq \gamma+d-1$, 
the term $(-1)^{i} \cdot \bin(M^{i}) \cdot \bin(M^{p+2d-i})$ cancels with 
$(-1)^{d+i} \cdot \bin(M^{d+i}) \cdot \bin(M^{p+d-i})$ since $(-1)^d=-1$, 
$\bin(M^{i})=\bin(M^{d+i})$ and $\bin(M^{p+2d-i})=\bin(M^{p+d-i})$.
This concludes the proof of the theorem in the case when $d$ is odd. 

Assume from now on that $d$ is even. Substituting any $p\geq 2\gamma+2d$
in~\eqref{equation_p2d} yields 
\begin{align*}
\Sigma(p+2d)-\Sigma(p)
&=\sum_{i=\gamma}^{\gamma+2d-1} (-1)^{i} \cdot \bin(M^{i}) \cdot
\bin(M^{p+2d-i})\\ 
&=\sum_{i=\gamma}^{\gamma+2d-1} (-1)^{i} \cdot \bin(M^{i}) \cdot \bin(M^{p-i})
=\Sigma(p)-\Sigma(p-2d).\\ 
\end{align*}
We obtain that 
\begin{equation}
\label{equation_sigma_prec}
\Sigma(p+2d)-\Sigma(p)=\Sigma(p)-\Sigma(p-2d) \quad\mbox{holds for
  $p\geq 2\gamma+2d$.} 
\end{equation}
Therefore, if we verify that $\Sigma(p)=0$ holds for $p\leq
2\gamma+4d$, the equality $\Sigma(p)=0$ for $p>2(\gamma+d)$ may
be shown by induction on $p$ using~\eqref{equation_sigma_prec}. 
\end{proof}
As a consequence of Theorems~\ref{theorem_exp-primitive} and~\ref{theorem_Euler_bound}
we obtain the following upper bound.
\begin{corollary}
To determine whether the level poset $P$ of an $n\times n$ 
primitive
binary matrix $M$ is a level Eulerian poset one must only verify
the Eulerian condition~\eqref{equation_level_Eulerian} 
for $p\leq 2n^2-4n+6$.
\end{corollary}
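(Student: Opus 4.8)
The plan is to combine the two theorems cited as its antecedents. First I would invoke Theorem~\ref{theorem_exp-primitive} (Holladay--Varga): since $M$ is primitive, it is indecomposable with period $d=1$, and its exponent $\gamma$ satisfies $\gamma \leq n^2 - 2n + 2$. Here the exponent of a primitive matrix coincides with the index $\gamma$ appearing in Theorem~\ref{theorem_decomposable-block-matrix}, as remarked in the text right after that theorem.

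Next I would feed this into Theorem~\ref{theorem_Euler_bound}. Since $M$ is primitive we have $d=1$, which is odd, so the improved bound applies: $P$ is level Eulerian if and only if the Eulerian condition~\eqref{equation_level_Eulerian} holds for all $p < 2\gamma + 2d = 2\gamma + 2$. Substituting the Holladay--Varga bound $\gamma \leq n^2 - 2n + 2$ gives $2\gamma + 2 \leq 2(n^2 - 2n + 2) + 2 = 2n^2 - 4n + 6$. Hence it suffices to check~\eqref{equation_level_Eulerian} for all $p < 2n^2 - 4n + 6$, i.e.\ for $p \leq 2n^2 - 4n + 5$; a fortiori the condition ``for $p \leq 2n^2 - 4n + 6$'' stated in the corollary is enough. (One should double-check whether the intended reading of the corollary's ``$p \leq 2n^2-4n+6$'' is meant to be the same as ``$p < 2\gamma+2$'' with $\gamma$ replaced by its worst-case value, or whether the extra slack is deliberate; in either case the argument goes through, since a larger range of $p$ only makes the sufficient condition stronger than necessary.)

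There is essentially no obstacle here: the corollary is a direct numerical consequence of plugging one bound into another, and the only subtlety worth a sentence of care is the identification of the ``exponent'' of a primitive matrix with the ``index'' $\gamma$ used in Theorem~\ref{theorem_Euler_bound}, together with the observation that $d=1$ is odd so the sharper bound $p < 2\gamma + 2d$ from that theorem is available rather than the generic $p < 2\gamma + 4d$. I would state the proof in two lines accordingly.
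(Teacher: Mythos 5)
Your proof is correct and follows the route the paper clearly intends: plug the Holladay--Varga exponent bound into Theorem~\ref{theorem_Euler_bound} with $d=1$ (odd), obtaining $p < 2\gamma+2 \leq 2n^2-4n+6$. Your side observation that the corollary's stated ``$p \leq 2n^2-4n+6$'' is actually one unit slacker than what the theorem delivers (which gives $p \leq 2n^2-4n+5$) is a fair reading of the paper and does not affect correctness.
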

\begin{remark}
{\rm
For a general $n \times n$
adjacency matrix it seems hard to give a better than
exponential estimate as a function of $n$ for the bounds
given in Theorem~\ref{theorem_Euler_bound}. However, for indecomposable
matrices we may still obtain a polynomial estimate using
Theorem~\ref{theorem_exp-indecomposable}. 
}
\end{remark}

\begin{example}
{\rm
The simplest level Eulerian poset is the butterfly poset
whose underlying adjacency matrix is
$$ M = \begin{pmatrix} 1 & 1 \\ 1 & 1 \end{pmatrix} . $$
This matrix has exponent $\gamma = 1$ and hence by
Theorem~\ref{theorem_Euler_bound} it is enough
to verify the Eulerian
condition~\eqref{equation_level_Eulerian} for $p=2$.
}
\label{example_order_2}
\end{example}

\begin{example}
{\rm
Consider the level poset shown in Figure~\ref{figure_main}. Its underlying
adjacency matrix $M$ satisfies 
$$
\bin(M^{2})=
\begin{pmatrix}
1&1&1&1\\
1&1&1&1\\
1&0&1&1\\
1&1&1&1\\
\end{pmatrix} $$
and $\bin(M^{3})=J$.
Thus $M$ is primitive and the exponent is given by
$\gamma=3$. 
Theorem~\ref{theorem_Euler_bound} gives the bound $p < 8$.
Hence to show that this matrix
produces a level Eulerian poset, we need verify
the Eulerian condition~\eqref{equation_level_Eulerian}
for the three values $p = 2, 4, 6$, which is a straightforward task.
}
\label{example_order_4}
\end{example}

Starting from the butterfly poset, for each $n\geq 2$ we may construct
a level Eulerian poset whose underlying digraph has $n$ vertices by
repeatedly using the following lemma.
The drawback to this construction is that it does
not add any more strongly connected components to the underlying graph.
\begin{lemma}
Let $M$ be a $n \times n$ matrix whose poset is level Eulerian
and let $\vec{v}$ be a column vector of~$M$.
Then\\[2mm] 
(i)
the level poset of the transpose matrix $M^{T}$ is
also level Eulerian.
\\
(ii)
the $(n+1) \times (n+1)$ matrix
$$  \begin{pmatrix} M & \vec{v} \\ 0 & 0 \end{pmatrix} $$ 
is also level Eulerian.
\end{lemma}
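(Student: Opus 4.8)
The plan is to verify in both parts the matrix criterion of Lemma~\ref{lemma_level_Eulerian}. For a nonnegative square matrix $A$ write $\Sigma_{A}(p) = \sum_{i=0}^{p} (-1)^{i} \cdot \bin(A^{i}) \cdot \bin(A^{p-i})$, so that the level poset of $A$ is Eulerian exactly when $\Sigma_{A}(p) = 0$ for all $p \geq 1$. We are given that $\Sigma_{M}(p) = 0$ for every $p \geq 1$ and must deduce the same identity for $M^{T}$ and for $N := \begin{pmatrix} M & \vec{v} \\ 0 & 0 \end{pmatrix}$.

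For part (i) the point is that binary reduction commutes with transposition, so $\bin((M^{T})^{i}) = \bin((M^{i})^{T}) = \bin(M^{i})^{T}$. Using $(XY)^{T} = Y^{T}X^{T}$,
$$ \Sigma_{M^{T}}(p) \; = \; \sum_{i=0}^{p} (-1)^{i} \cdot \bin(M^{i})^{T} \cdot \bin(M^{p-i})^{T} \; = \; \left( \sum_{i=0}^{p} (-1)^{i} \cdot \bin(M^{p-i}) \cdot \bin(M^{i}) \right)^{T} . $$
Reindexing the inner sum by $j = p - i$ turns it into $(-1)^{p}\,\Sigma_{M}(p)$, so $\Sigma_{M^{T}}(p) = (-1)^{p}\,\Sigma_{M}(p)^{T} = 0$ for all $p \geq 1$, which is part (i).

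For part (ii) I would first record the block identity $N^{k} = \begin{pmatrix} M^{k} & M^{k-1}\vec{v} \\ 0 & 0 \end{pmatrix}$ for $k \geq 1$, which is immediate by induction on $k$. Here the hypothesis that $\vec{v}$ is a \emph{column} of $M$, say $\vec{v} = M e_{c}$, is exactly what is needed: then $M^{k-1}\vec{v} = M^{k} e_{c}$ is literally the $c$-th column of $M^{k}$, so $\bin(M^{k-1}\vec{v}) = \bin(M^{k})\,e_{c}$ and hence $\bin(N^{k}) = \begin{pmatrix} \bin(M^{k}) & \bin(M^{k})\,e_{c} \\ 0 & 0 \end{pmatrix}$ for $k \geq 1$, while $\bin(N^{0}) = I_{n+1}$. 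Multiplying these block matrices and summing --- treating the terms $i = 0$ and $i = p$ separately, since $\bin(N^{0})$ breaks the block pattern --- one finds that $\Sigma_{N}(p)$ has zero bottom row, top-left $n \times n$ block equal to $\Sigma_{M}(p)$, and top-right column equal to $\Sigma_{M}(p)\,e_{c}$. Both are zero by hypothesis, so $\Sigma_{N}(p) = 0$ for all $p \geq 1$, proving part (ii).

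The only real subtlety, and the step I expect to demand the most care, is that $\bin$ does not commute with matrix multiplication, so products such as $\bin(N^{i})\bin(N^{j})$ cannot be manipulated freely; the computation in (ii) works precisely because $\vec{v}$ is a genuine column of $M$, which makes each $M^{k-1}\vec{v}$ an honest column of a power of $M$ with no spurious binary reduction introduced, and the boundary terms $i = 0, p$ need separate handling since $\bin(N^{0}) = I_{n+1}$ is not of the displayed block form. An alternative, purely order-theoretic proof of (ii) runs as follows: the new vertex $v_{n+1}$ has no out-edges in the underlying digraph of $N$ and its in-neighbourhood is the support of $\vec{v}$, whence every non-singleton interval of the level poset of $N$ either already lies inside the level poset of $M$, or is isomorphic --- via the bijection fixing all elements other than the maximum and identifying $(v_{n+1}, j)$ with $(c, j)$ --- to an interval of the level poset of $M$, and is therefore Eulerian.
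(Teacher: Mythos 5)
Your proof is correct. The paper states this lemma without proof, so there is no argument to compare against, but your computation does exactly what the authors presumably intended: verify the matrix criterion of Lemma~\ref{lemma_level_Eulerian} directly. Part~(i) is a clean transposition-and-reindexing argument. In part~(ii), the identification $\vec{v} = M e_{c}$ giving $M^{k-1}\vec{v} = M^{k}e_{c}$ and hence $\bin(N^{k}) = \begin{pmatrix} \bin(M^{k}) & \bin(M^{k}) e_{c} \\ 0 & 0 \end{pmatrix}$ for $k \geq 1$ is the right observation, and your careful handling of the $i=0$ and $i=p$ terms (where $\bin(N^{0}) = I_{n+1}$ breaks the block pattern, but the product $I_{n+1}\cdot\bin(N^{p})$ still lands in the right block form when $p\geq 1$) is exactly what is needed; one gets $\Sigma_{N}(p) = \begin{pmatrix} \Sigma_{M}(p) & \Sigma_{M}(p)\,e_{c} \\ 0 & 0 \end{pmatrix} = 0$. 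Your alternative order-theoretic argument for (ii) --- that the new vertex is a sink whose in-neighbourhood coincides with that of the vertex $c$, so every interval of the new level poset is either already an interval of the old one or isomorphic to an interval with top element over $c$ --- is also valid and arguably more illuminating, since it explains structurally \emph{why} the doubling-the-column construction preserves Eulerianness rather than verifying it by brute-force cancellation.
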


If we restrict our attention to level posets with strongly
connected underlying digraphs, we obtain the following restriction
on the order.

\begin{theorem}
Let $P$ be a level Eulerian poset whose underlying matrix $M$
is indecomposable.
Then the order of the matrix $M$ is even.
\label{theorem_even_order}
\end{theorem}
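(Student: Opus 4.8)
The plan is to read enough arithmetic out of the Eulerian condition, applied to one well-chosen family of intervals, to force the order $n$ to be even. I would fix the canonical block decomposition of the indecomposable matrix $M$ of period $d$ (Theorem~\ref{theorem_indecomposable-block-matrix}), so that $V=C_{0}\cup C_{1}\cup\cdots\cup C_{d-1}$, and set $n_{r}=|C_{r}|$, so $n=n_{0}+\cdots+n_{d-1}$. The one structural input needed is that for every $k\geq\gamma$ (the index of $M$) one has $\bin(M^{k})_{x,y}=1$ if and only if the class of $y$ lies $k$ steps ahead of the class of $x$ modulo $d$; this follows by combining Theorem~\ref{theorem_decomposable-block-matrix} (which gives $\bin(M^{k})=\bin(M^{k+Nd})$ for $k\geq\gamma$ and all $N$) with Theorem~\ref{theorem_indecomposable-block-matrix}.

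Next I would analyse a generic interval $[(u,0),(v,m)]$ for large $m$. Writing $a_{k}$ for its number of rank-$k$ elements, Corollary~\ref{corollary_walks} identifies $a_{k}$, for $0<k<m$, with the number of $w$ satisfying $\bin(M^{k})_{u,w}=1$ and $\bin(M^{m-k})_{w,v}=1$, and Eulerianness reads $\sum_{k=0}^{m}(-1)^{k}a_{k}=0$. Using the structural input, together with the fact that a nonempty interval forces the two walk-length residues to be compatible, one sees that for large $m$ the ``far'' reachability condition is automatically implied by the ``near'' one. Hence $a_{k}=n_{k\bmod d}$ for $\gamma\leq k\leq m-\gamma$ (independent of $u,v$), while $a_{k}=\rho_{k}(u):=\#\{w:\bin(M^{k})_{u,w}=1\}$ for $k<\gamma$ (independent of $v,m$) and $a_{m-\ell}=\sigma_{\ell}(v):=\#\{w:\bin(M^{\ell})_{w,v}=1\}$ for $\ell<\gamma$ (independent of $u,m$). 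Taking $m$ even, the Eulerian relation becomes
$$ P(u)+S(m)+R(v)=0, $$
where $P(u)=\sum_{k=0}^{\gamma-1}(-1)^{k}\rho_{k}(u)$, $R(v)=\sum_{\ell=0}^{\gamma-1}(-1)^{\ell}\sigma_{\ell}(v)$, and $S(m)=\sum_{k=\gamma}^{m-\gamma}(-1)^{k}n_{k\bmod d}$. I would also record that $\sum_{u}P(u)=\sum_{v}R(v)=\sum_{k=0}^{\gamma-1}(-1)^{k}e_{k}$, where $e_{k}$ is the number of $1$'s in $\bin(M^{k})$ (both sums count the $1$'s of $\bin(M^{k})$, once by rows and once by columns).

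If $d$ is even, put $v=u$ and let $m$ run through large multiples of $d$ (which are automatically even): then $P(u)+R(u)$ is a fixed integer, so $S(m)$ is constant along $m,m+2d,\ldots$. Since $k\mapsto(-1)^{k}n_{k\bmod d}$ has period $d$, one gets $S(m+2d)-S(m)=2\sum_{r=0}^{d-1}(-1)^{r}n_{r}$, whence $\sum_{r=0}^{d-1}(-1)^{r}n_{r}=0$ and $n=2\sum_{r\ \mathrm{even}}n_{r}$ is even. If $d$ is odd, every residue modulo $d$ is represented by some large even $m$, so the relation $P(u)+R(v)=-S(m)$ holds for all pairs $u,v$, with $-S(m)$ depending only on the class difference of $v$ and $u$. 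In particular $P$ and $R$ are constant on each block, and a telescoping argument around the cycle of blocks forces $P\equiv p$ and $R\equiv q$ globally, and forces $S(m)$ to take a single value $-(p+q)$ for all large even $m$. Constancy of $S$ then forces $n_{0}=\cdots=n_{d-1}$ (compare $S(m)$ with $S(m+2)$), so $n=d\,n_{0}$ and $S(m)=(-1)^{\gamma}n_{0}$; averaging $P(u)+R(v)=-S(m)$ over all $u$ and $v$ gives $p=q=\frac{1}{n}\sum_{k=0}^{\gamma-1}(-1)^{k}e_{k}$, and combining everything yields $d\,n_{0}^{2}=2\,(-1)^{\gamma+1}\sum_{k=0}^{\gamma-1}(-1)^{k}e_{k}$. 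Since $d$ is odd this makes $n_{0}$ even, so $n=d\,n_{0}$ is even.

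The routine parts are the interval bookkeeping and the two telescoping sums. The delicate point is the odd-$d$ case: one must verify carefully that the boundary sums $P(u),R(v)$ are genuinely independent of $m$ (so that the functional equation $P(u)+R(v)=-S(m)$ really constrains the block sizes) and then extract the extra factor that upgrades ``$n$ times something is even'' to ``$n_{0}$ is even''. I expect that to be where the real work lies.
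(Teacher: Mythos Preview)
Your even-$d$ case is fine and does give $\sum_{r}(-1)^{r}n_{r}=0$, hence $n$ even. The odd-$d$ case, however, has a genuine gap. Your assertion that $a_{k}=n_{k\bmod d}$ for $\gamma\le k\le m-\gamma$ is ``independent of $u,v$'' is not correct: if $u\in C_{s}$ then $a_{k}=n_{(s+k)\bmod d}$, so the middle sum $S(m)$ carries a hidden dependence on the class $s$ of $u$. Thus the relation $P(u)+R(v)=-S(m)$ does not show that $-S(m)$ depends only on the class difference $b-a$; it depends on $a$ and $b-a$ separately, i.e.\ on the pair $(a,b)$. Once you track this, the system becomes $p_{a}+q_{b}=-\sigma(a,b-a)$, and a short computation shows only that $\sigma(a,r)-\sigma(0,a+r)$ is independent of $r$; this is far weaker than your claimed ``$P\equiv p$, $R\equiv q$ globally'' and does not by itself force $n_{0}=\cdots=n_{d-1}$. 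Your final identity $d\,n_{0}^{2}=2(-1)^{\gamma+1}\sum_{k}(-1)^{k}e_{k}$ is derived \emph{assuming} all $n_{r}$ equal, so the argument is circular at that point. It may be that a more careful analysis of the $d^{2}$ equations $p_{a}+q_{b}=-\sigma(a,b-a)$, using the explicit shape of $\sigma$, rescues the conclusion, but that is real work you have not done---and the paper in fact never asserts the block sizes are equal for odd $d$.

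The paper's proof sidesteps all of this with a one-line trace trick. It applies the matrix Eulerian condition~\eqref{equation_level_Eulerian} at $p=2\delta$ (where $\delta$ is the least multiple of $d$ that is $\ge\gamma$), takes the trace modulo~$2$, and uses $\trace(AB)=\trace(BA)$ to pair the term $i$ with the term $2\delta-i$; these cancel mod~$2$, leaving only $\trace\bigl(\bin(M^{\delta})^{2}\bigr)\equiv 0\bmod 2$. Since $\bin(M^{\delta})$ is block-diagonal with all-ones blocks of sizes $c_{0},\ldots,c_{d-1}$, this trace is $\sum_{q}c_{q}^{2}\equiv\sum_{q}c_{q}=n\bmod 2$. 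No case split on the parity of $d$ is needed.
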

\begin{proof}
Let $d$ and $\gamma$ be respectively the period and index
of the matrix $M$.
Let $\delta$ be the least multiple of $d$ which is greater than or equal
to $\gamma$. (An upper bound for $\delta$ is $\gamma + d-1$.)
By reordering the vertices of the graph $G$,
we may assume that the matrix $M$ has the block form
given in equation~\eqref{equation_block-matrix}. 
Hence $M^{\delta}$ is also a block matrix.
Since $\delta \geq \gamma$, each block in 
$\bin(M^{\delta})$ is either the zero matrix or the matrix $J$
of all ones.
Since $\delta$ is a multiple of $d$,
the matrix $\bin(M^{\delta})$ has the form
\begin{equation}
\label{equation_M_to_delta}
\bin(M^{\delta})
   =
\begin{pmatrix}
J&0&\cdots&0\\
0&J&\cdots&0\\
\vdots&\vdots&\ddots&\vdots\\
0&0&\cdots&J\\
\end{pmatrix} ,
\end{equation}
where the $q$th block is a $c_{q}\times c_{q}$ square matrix
whose entries are all $1$'s. 
Apply the trace to 
the Eulerian condition~\eqref{equation_level_Eulerian}
for $p=2\delta$
and consider this equation modulo~$2$.
Recall $\trace(A B) = \trace(B A)$
holds for any pair of square matrices,
and in particular, it holds for
$A = \bin(M^{i})$ and
$B = \bin(M^{2\delta-i})$.
Hence the Eulerian condition~\eqref{equation_level_Eulerian}
collapses to
$$  \trace\left({\bin(M^{\delta})}^{2}\right) \equiv 0 \bmod 2  . $$
Note that the square of the
matrix $\bin(M^{\delta})$ is given by
$$
{\bin(M^{\delta})}^{2}
   =
\begin{pmatrix}
c_{0} \cdot J&0&\cdots&0\\
0&c_{1} \cdot J&\cdots&0\\
\vdots&\vdots&\ddots&\vdots\\
0&0&\cdots&c_{q-1} \cdot J\\
\end{pmatrix} . $$
Hence the trace of the above matrix is
$\sum_{q=0}^{d-1} c_{q}^{2} \equiv \sum_{q=0}^{d-1} c_{q} \bmod 2$.
Hence we conclude that the order of $M$ is an even number.
\end{proof}

\section{Level half-Eulerian posets}

In analogy to level Eulerian posets
(Definition~\ref{definition_level_Eulerian}) we define level
half-Eulerian posets as follows. 
\begin{definition}
\label{definition_level_half-Eulerian}
A level poset $P$ is said to be a
{\em level half-Eulerian poset} if every interval
is half-Eulerian.
\end{definition}
In analogy to the horizontal doubling operation introduced
in~\cite{Bayer_Hetyei_E,Bayer_Hetyei_G}, we define the {\em horizontal
double $D_{\leftrightarrow}(P)$ of a level poset $P$} as the poset
obtained by replacing each $(u,i)\in P$ by two copies $(u_{1},i)$, $(u_{2},i)$
and preserving the partial order of $P$, i.e., setting $(u_{k},i)<(v_{l},j)$
in $D_{\leftrightarrow}(P)$ if and only if $(u,i)<(v,j)$ holds in $P$. 
\begin{proposition}
The horizontal double $D_{\leftrightarrow}(P)$ of a level poset $P$ is a
level poset. In particular, if the underlying adjacency matrix of $P$ is
$M$ then the underlying adjacency matrix $D_{\leftrightarrow}(M)$ of
$D_{\leftrightarrow}(P)$ is
\begin{equation}
\label{equation_horizontal_doubling_matrix}
D_{\leftrightarrow}(M)=
\begin{pmatrix}
M & M \\ M & M 
\end{pmatrix} .
\end{equation}
\end{proposition}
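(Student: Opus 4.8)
The plan is to verify directly from the definition of a level poset that $D_{\leftrightarrow}(P)$ satisfies all three required properties and that its underlying matrix is the claimed $2 \times 2$ block matrix. First I would set up notation: let $P$ have vertex set $V$ and underlying matrix $M$, so that $D_{\leftrightarrow}(P)$ has element set $(V \times \{1,2\}) \times {\mathbb Z}$, which we identify with $V' \times {\mathbb Z}$ where $V' = V \times \{1,2\}$ is finite and nonempty. The projection onto the ${\mathbb Z}$-coordinate is still a rank function, since for a cover relation $(u_k,i) \prec (v_l,j)$ in $D_{\leftrightarrow}(P)$ we need $(u,i) < (v,j)$ in $P$; I would check that covers in the double correspond exactly to covers in $P$ (if $(u,i) < (w,i') < (v,j)$ in $P$ strictly between, then lifting through either copy gives a strict intermediate element in the double), so $j - i = 1$.

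Next I would verify the shift-invariance axiom. By construction $(u_k,i) < (v_l,i+1)$ in $D_{\leftrightarrow}(P)$ iff $(u,i) < (v,i+1)$ in $P$, and since $P$ is a level poset this holds iff $(u,0) < (v,1)$ in $P$, which in turn holds iff $(u_k,0) < (v_l,1)$ in $D_{\leftrightarrow}(P)$. This is the defining condition, so $D_{\leftrightarrow}(P)$ is a level poset. It remains to identify its underlying matrix. By Corollary~\ref{corollary_walks} (or directly from the definition of the underlying graph), the $((u,k),(v,l))$ entry of $D_{\leftrightarrow}(M)$ is $1$ precisely when $(u_k,0) \prec (v_l,1)$ in $D_{\leftrightarrow}(P)$, which happens iff $(u,0) < (v,1)$ in $P$, i.e.\ iff $M_{u,v} > 0$. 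This value is independent of $k$ and $l$, so each of the four $n \times n$ blocks of $D_{\leftrightarrow}(M)$ equals $\bin(M) = M$ (recalling $M$ is a $0/1$ matrix as an adjacency matrix), giving exactly~\eqref{equation_horizontal_doubling_matrix}.

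I do not anticipate a serious obstacle here; the statement is essentially a bookkeeping check that the horizontal doubling operation on finite posets, transported to the level setting, respects the level-poset axioms and behaves functorially on adjacency matrices. The one point requiring a small amount of care is confirming that the rank function is preserved, that is, that doubling does not create new cover relations spanning rank difference greater than one — this follows because any element of $P$ strictly between $(u,i)$ and $(v,j)$ produces (via both of its copies) elements of $D_{\leftrightarrow}(P)$ strictly between $(u_k,i)$ and $(v_l,j)$, so a cover in the double forces a cover in $P$. With that observed, the remaining verifications are immediate from the definitions.
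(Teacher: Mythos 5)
Your proof is correct and carries out precisely the ``straightforward verification'' that the paper leaves to the reader: identify the element set with $(V\times\{1,2\})\times\mathbb{Z}$, check that the $\mathbb{Z}$-projection is a rank function (covers in the double force covers in $P$ because any strictly intermediate element of $P$ lifts to a strictly intermediate element of the double), verify the shift-invariance axiom via the corresponding axiom for $P$, and read off the adjacency matrix blockwise. No comparison of approaches is needed since the paper supplies none.
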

The straightforward verification is left to the reader. As an immediate
consequence of Lemma~\ref{lemma_h_double}, we obtain the following corollary.
\begin{corollary}
\label{corollary_h_double}
A level poset $P$ is half-Eulerian if and only if its horizontal
double $D_{\leftrightarrow}(P)$ is level Eulerian. 
\end{corollary}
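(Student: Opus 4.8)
The plan is to reduce the statement entirely to Lemma \ref{lemma_h_double}, applied level by level, and then chain it through the definition of the horizontal double. The key observation is that every non-singleton interval of $D_{\leftrightarrow}(P)$ is, up to isomorphism, the horizontal double of a non-singleton interval of $P$, together with the degenerate cases that arise because only the interior elements get doubled.

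First I would record the interval structure. Take an interval $[(u_k,i),(v_l,j)]$ in $D_{\leftrightarrow}(P)$ with $i<j$. By the definition of the horizontal double, its interior consists of all pairs $(w_m,h)$ with $i<h<j$ and $(u,i)<(w,h)<(v,j)$ in $P$, and the order among them is inherited from $P$ ignoring the subscript $m\in\{1,2\}$. Thus $[(u_k,i),(v_l,j)]$ in $D_{\leftrightarrow}(P)$ is isomorphic to the horizontal double of the interval $[(u,0),(v,j-i)]$ in $P$ (using that intervals of a level poset depend only on the rank difference). So $D_{\leftrightarrow}(P)$ is Eulerian if and only if the horizontal double of every non-singleton interval of $P$ is Eulerian.

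Next I would invoke Lemma \ref{lemma_h_double} directly: a graded poset $Q$ is half-Eulerian precisely when its horizontal double is Eulerian, which is exactly the alternating-sum condition stated there. Since "$P$ is half-Eulerian'' means by Definition \ref{definition_level_half-Eulerian} that \emph{every} interval of $P$ is half-Eulerian, the two conditions "every interval of $P$ is half-Eulerian'' and "the horizontal double of every interval of $P$ is Eulerian'' coincide termwise. Combined with the interval identification from the previous paragraph, this gives that $P$ is level half-Eulerian if and only if $D_{\leftrightarrow}(P)$ is level Eulerian. One should also note $D_{\leftrightarrow}(P)$ is genuinely a level poset, which is the content of the preceding proposition, so that "level Eulerian'' applies to it in the sense of Definition \ref{definition_level_Eulerian}.

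The only real subtlety — the step I would be most careful about — is the treatment of length-$1$ intervals and the rank-$0$ versus rank-$n+1$ endpoints. In $D_{\leftrightarrow}(P)$ a cover relation $(u_k,i)\prec(v_l,i+1)$ has empty open interval, so it is trivially Eulerian and imposes no condition, matching the fact that Lemma \ref{lemma_h_double} also says nothing for $\rho(x,y)=0$; and because the minimum and maximum of an interval are not themselves doubled in the relevant way (they are the fixed endpoints $(u,i)$ and $(v,j)$, not split copies), no spurious intervals are created. Once this bookkeeping is checked, the corollary follows with no computation. I would therefore present the argument as: (1) identify intervals of $D_{\leftrightarrow}(P)$ with horizontal doubles of intervals of $P$; (2) quote Lemma \ref{lemma_h_double} for each; (3) quantify over all intervals to pass from half-Eulerian to level half-Eulerian and from Eulerian to level Eulerian.
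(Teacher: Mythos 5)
Your argument is correct and follows the route the paper intends: the paper calls the corollary an immediate consequence of Lemma~\ref{lemma_h_double} and leaves the verification, namely the interval identification you carry out (that every non-singleton interval of $D_{\leftrightarrow}(P)$ is the horizontal double of the corresponding interval of $P$), to the reader. One small remark: the equivalence between $Q$ being half-Eulerian and $D_{\leftrightarrow}(Q)$ being Eulerian is the \emph{definition} of half-Eulerian rather than the content of Lemma~\ref{lemma_h_double} (which is the alternating-sum reformulation), so your step (2) can cite the definition directly and the lemma is not strictly needed.
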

Lemma~\ref{lemma_level_Eulerian} has the following half-Eulerian
analogue.
\begin{lemma}
A level poset is half-Eulerian
if and only if its adjacency matrix $M$
satisfies
\begin{equation}
   \sum_{i=1}^{p-1} (-1)^{i-1} \cdot \bin(M^{i}) \cdot \bin(M^{p-i}) = 
\left\{
\begin{array}{ll}
J&\mbox{if $p$ is even,}\\
0&\mbox{if $p$ is odd,}\\
\end{array}
\right. 
\label{equation_level_half-Eulerian}
\end{equation}
for all $p \geq 1$.
\label{lemma_level_half-Eulerian}
\end{lemma}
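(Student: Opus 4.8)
The plan is to obtain this as a corollary of the Eulerian criterion already proved in Lemma~\ref{lemma_level_Eulerian}, by passing through the horizontal double. By Corollary~\ref{corollary_h_double} the level poset $P$ is level half-Eulerian if and only if its horizontal double $D_{\leftrightarrow}(P)$ is level Eulerian, and by~\eqref{equation_horizontal_doubling_matrix} the underlying matrix of $D_{\leftrightarrow}(P)$ is the $2n\times 2n$ block matrix $D_{\leftrightarrow}(M)=\left(\begin{smallmatrix}M&M\\M&M\end{smallmatrix}\right)$. So the task is to write out the Eulerian condition~\eqref{equation_level_Eulerian} for $D_{\leftrightarrow}(M)$ and rewrite it as a condition on $M$ alone.

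The first step is to compute $\bin$ of the powers of the doubled matrix. A one-line induction gives $D_{\leftrightarrow}(M)^{k}=2^{k-1}\left(\begin{smallmatrix}M^{k}&M^{k}\\M^{k}&M^{k}\end{smallmatrix}\right)$ for every $k\geq 1$, and since $2^{k-1}>0$ this yields $\bin\!\left(D_{\leftrightarrow}(M)^{k}\right)=\left(\begin{smallmatrix}\bin(M^{k})&\bin(M^{k})\\\bin(M^{k})&\bin(M^{k})\end{smallmatrix}\right)$ for $k\geq 1$, whereas for $k=0$ one has instead $\bin\!\left(D_{\leftrightarrow}(M)^{0}\right)=\bin(I_{2n})=I_{2n}$. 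The second step is to substitute these into $\sum_{i=0}^{p}(-1)^{i}\bin(D_{\leftrightarrow}(M)^{i})\bin(D_{\leftrightarrow}(M)^{p-i})=0$ and simplify. For each interior index $1\leq i\leq p-1$ the product of two matrices of the form $\left(\begin{smallmatrix}A&A\\A&A\end{smallmatrix}\right)$ and $\left(\begin{smallmatrix}B&B\\B&B\end{smallmatrix}\right)$ has every block equal to $2AB$, giving $2\bin(M^{i})\bin(M^{p-i})$ in each block; the two boundary indices $i=0$ and $i=p$ each contribute a matrix every block of which is $\bin(M^{p})$, carrying signs $+1$ and $(-1)^{p}$. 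All four $n\times n$ blocks of the resulting $2n\times 2n$ identity coincide, so the identity is equivalent to the vanishing of that common block; dividing by $2$ and moving the boundary term to the other side one arrives at~\eqref{equation_level_half-Eulerian}, the two cases there corresponding to $(-1)^{p}=\pm 1$.

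I expect the one genuinely delicate point to be the bookkeeping of the two boundary terms $i=0$ and $i=p$: these are precisely the indices at which $D_{\leftrightarrow}(M)^{i}$ is the identity matrix rather than a matrix of the block form above, so one must not replace $\bin(I_{2n})$ by the all-ones block, and one must check that together they produce the right-hand side of~\eqref{equation_level_half-Eulerian} when $p$ is even and cancel when $p$ is odd. The rest is routine block-matrix algebra together with the observation that the parity of $p$ controls the sign pattern.

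As an independent check --- and as an alternative argument avoiding the doubling --- one can proceed directly from the Bayer--Hetyei criterion of Lemma~\ref{lemma_h_double}. Every non-singleton interval of $P$ is isomorphic to an interval $[(u,0),(v,p)]$ with $p\geq 1$ and $\bin(M^{p})_{u,v}=1$, and by Corollary~\ref{corollary_walks} its open part contains exactly $(\bin(M^{i})\bin(M^{p-i}))_{u,v}$ elements of rank $i$ for each $1\leq i\leq p-1$. Substituting this count into Lemma~\ref{lemma_h_double} reproduces~\eqref{equation_level_half-Eulerian} entry by entry, once one notes that if $\bin(M^{p})_{u,v}=0$ then no walk of length $p$ joins $u$ to $v$, so that every product $(\bin(M^{i})\bin(M^{p-i}))_{u,v}$ with $1\leq i\leq p-1$ already vanishes and the corresponding entry of~\eqref{equation_level_half-Eulerian} holds trivially.
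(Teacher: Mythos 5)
Both of your routes are computationally sound, but notice where they actually land: carrying out the block-matrix algebra in your first argument (or applying Lemma~\ref{lemma_h_double} entry by entry as in your second) produces
$$\sum_{i=1}^{p-1}(-1)^{i-1}\bin(M^{i})\bin(M^{p-i})=\bin(M^{p})\quad(p\text{ even}),\qquad\sum_{i=1}^{p-1}(-1)^{i-1}\bin(M^{i})\bin(M^{p-i})=0\quad(p\text{ odd}),$$
with $\bin(M^{p})$, not $J$, on the right for even $p$. You assert that this ``arrives at~\eqref{equation_level_half-Eulerian}'' and, in the second argument, that entries with $\bin(M^{p})_{u,v}=0$ make~\eqref{equation_level_half-Eulerian} ``hold trivially.'' Taken literally neither claim is correct: at an entry where $\bin(M^{p})_{u,v}=0$ the left side of~\eqref{equation_level_half-Eulerian} is $0$ while the stated right side $J$ has entry $1$, so the displayed equation would \emph{fail} there, not hold. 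These two statements agree only when $\bin(M^{p})=J$ for every even $p\geq 2$, and that is not a consequence of level half-Eulerianness: take $M=\left(\begin{smallmatrix}0&1\\1&0\end{smallmatrix}\right)$, whose level poset has every interval a chain (hence is level half-Eulerian), yet $\bin(M^{2})=I\neq J$. So your computation is correct and in fact uncovers a misprint in the lemma as printed --- the right-hand side for even $p$ should read $\bin(M^{p})$. (The distinction is immaterial everywhere the paper uses~\eqref{equation_level_half-Eulerian}, e.g.\ in Theorem~\ref{theorem_odd_order} with $M$ primitive and $p=2\gamma$, where $\bin(M^{p})=J$.) You should state the corrected right-hand side rather than claim agreement with the printed one.

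As for method: the paper's own argument is the one-line deduction from Lemma~\ref{lemma_h_double} together with Lemma~\ref{lemma_flag_f}, which is exactly your ``independent check.'' Your primary route through the horizontal double $D_{\leftrightarrow}(M)$ and Lemma~\ref{lemma_level_Eulerian} is a perfectly valid, slightly longer alternative; it has the mild advantage of reusing the already-proved Eulerian matrix criterion and illustrating why the paper notes that~\eqref{equation_level_half-Eulerian} for $M$ is equivalent to~\eqref{equation_level_Eulerian} for $D_{\leftrightarrow}(M)$.
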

Directly from
Lemmas~\ref{lemma_h_double} and~\ref{lemma_flag_f},
a level poset $P$ is half-Eulerian if and only if its underlying
adjacency matrix $M$ satisfies~\eqref{equation_level_half-Eulerian} for
all $p>0$. It is straightforward to show 
directly that the adjacency matrix
$M$ of a level poset $P$ satisfies~\eqref{equation_level_half-Eulerian}
for a given $p>0$ if and only if the matrix $D_{\leftrightarrow}(M)$
given in~\eqref{equation_horizontal_doubling_matrix}
satisfies~\eqref{equation_level_Eulerian} for
the same $p$. As a consequence, we only need to verify
\eqref{equation_level_half-Eulerian} for values of $p$ up to the bound stated 
in Theorem~\ref{theorem_Euler_bound}. 

Unfortunately 
the natural generalization of the vertical doubling operation to a level
poset by replacing each element $(u,i)$ of a level poset $P$
with two copies $(u_{1},2i)$ and $(u_{2},2i+1)$, 
setting $(u_{1},2i)<(u_{2},2i+1)$ for each $u$,
and setting $(u_{2},2i+1)<(v_{1},2j)$ whenever
$(u,i)<(v,j)$ does not work.
This operation does not result in
a level poset because
the cover relations between levels
$2i$ and $2i+1$ would be different from the cover relations between
levels $2i-1$ and $2i$. 
However, we may perform this operation, take two
copies, shift the Hasse diagram of one of the copies
one step up, and finally intertwine the two copies.
In short, consider the level poset with the adjacency matrix
\begin{equation}
\label{equation_vertical_doubling_matrix}
D_{\updownarrow}(M) = \begin{pmatrix} 0&I\\ M&0 \end{pmatrix} ,
\end{equation}
where $M$ is the $n\times n$ underlying adjacency matrix
of the level poset $P$ and $I$ is the $n\times n$ identity matrix. 
\begin{definition}
Let $P$ be a level poset with adjacency matrix $M$. 
Define the {\em vertical double $D_{\updownarrow}(P)$} 
of $P$ to be the level poset whose
adjacency matrix $D_{\updownarrow}(M)$ is given
by~\eqref{equation_vertical_doubling_matrix}. 
\end{definition}
As a direct consequence of Lemma~\ref{lemma_v_double}, we have the following
corollary. 
\begin{corollary}
\label{corollary_v_double}
The vertical double $D_{\updownarrow}(P)$ of an arbitrary level poset
$P$ is level half-Eulerian.
\end{corollary}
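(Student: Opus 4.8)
The plan is to reduce the statement to the already-established Lemma~\ref{lemma_v_double}, by identifying, for each non-singleton interval of the level poset $D_{\updownarrow}(P)$ with adjacency matrix $D_{\updownarrow}(M)$, a graded poset to which the vertical doubling operation $D_{\updownarrow}(\cdot)$ of the Preliminaries has been applied. Concretely, first I would fix an interval $[((u,\epsilon),i),((v,\epsilon'),j)]$ in $D_{\updownarrow}(P)$, where the vertex set of $D_{\updownarrow}(M)$ is indexed by pairs $(u,\epsilon)$ with $\epsilon\in\{1,2\}$ (the block $\begin{pmatrix}0&I\\M&0\end{pmatrix}$ is read as: edges from level-$2$ copies to level-$1$ copies governed by $M$, edges from level-$1$ copies to level-$2$ copies governed by $I$). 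I would then observe that the powers of $D_{\updownarrow}(M)$ alternate in shape: $D_{\updownarrow}(M)^{2k}=\begin{pmatrix}M^{k}&0\\0&M^{k}\end{pmatrix}$ and $D_{\updownarrow}(M)^{2k+1}=\begin{pmatrix}0&M^{k}\\M^{k+1}&0\end{pmatrix}$. Via Corollary~\ref{corollary_walks} this gives an explicit description of the interval.

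Next I would set up the comparison poset. Given the interval $[((u,1),i),((v,2),j)]$ — the identity-block structure forces the parities of $i,j$ and the copies $\epsilon,\epsilon'$ to be linked, so one reduces to a few cases of this type — consider the interval $[(u,i'),(v,j')]$ in $P$ itself (for the appropriate $i',j'$ obtained by halving), which is a graded poset $Q$ in its own right. The key claim is that $[((u,1),i),((v,2),j)]$ in $D_{\updownarrow}(P)$ is isomorphic as a poset to $D_{\updownarrow}(Q)$, the vertical double of $Q$ in the sense of the Definition preceding Lemma~\ref{lemma_v_double}. The four clauses (i)--(iv) of that definition should match up term-by-term with the cover/order relations of $D_{\updownarrow}(M)$: clause (iii), $x_1<x_2$, corresponds exactly to the identity block $I$; clause (iv) corresponds to the $M$ block after rescaling ranks; and clauses (i),(ii) handle the bottom and top elements $\hz,\ho$ of the interval. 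Once this isomorphism is in place, Lemma~\ref{lemma_v_double} tells us $D_{\updownarrow}(Q)$ is half-Eulerian, hence every non-singleton interval of it satisfies the condition of Lemma~\ref{lemma_h_double}; but subintervals of $D_{\updownarrow}(Q)$ are again of the same form, so the interval we started with is half-Eulerian. Since the interval was arbitrary, $D_{\updownarrow}(P)$ is level half-Eulerian.

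An alternative, more computational route avoids case analysis: apply Lemma~\ref{lemma_level_half-Eulerian} directly to $N:=D_{\updownarrow}(M)$, i.e.\ verify $\sum_{i=1}^{p-1}(-1)^{i-1}\bin(N^{i})\bin(N^{p-i})$ equals $J$ for $p$ even and $0$ for $p$ odd. Using the block formulas above, $\bin(N^{2k})=\begin{pmatrix}\bin(M^{k})&0\\0&\bin(M^{k})\end{pmatrix}$ and $\bin(N^{2k+1})=\begin{pmatrix}0&\bin(M^{k})\\\bin(M^{k+1})&0\end{pmatrix}$ (with $M^{0}=I$), so the alternating sum splits into a diagonal part and an off-diagonal part; each reduces to a telescoping-type cancellation among the $\bin(M^{k})$, with the leftover $\bin(M^{0})=I$ accounting precisely for the $J$ on the right-hand side when $p$ is even. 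This mirrors the proof that $D_{\leftrightarrow}(M)$ satisfies \eqref{equation_level_Eulerian}.

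The main obstacle I expect is bookkeeping rather than conceptual: in the first approach, correctly matching the four clauses of the abstract vertical-doubling definition with the $2\times 2$ block structure of $D_{\updownarrow}(M)$, being careful about which copy ($\epsilon=1$ or $2$) sits at which parity of rank and about the endpoints $\hz,\ho$ of the interval (which are \emph{not} doubled). In the second approach the only delicate point is the treatment of the $M^{0}=I$ term and the odd/even split; everything else is a routine index shift. I would present the first approach as the main proof, since it makes the appeal to Lemma~\ref{lemma_v_double} transparent, and perhaps remark that the direct verification via Lemma~\ref{lemma_level_half-Eulerian} also works.
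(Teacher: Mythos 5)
The paper's own proof is a one-line citation of Lemma~\ref{lemma_v_double}: every interval of the level poset $D_{\updownarrow}(P)$ is (or sits inside) a vertical double, in the sense of that lemma, of an interval of $P$, hence is half-Eulerian. Your first approach tries to make this identification explicit, which is the right instinct, but the key isomorphism is stated the wrong way around. The interval $[((u,1),i),((v,2),j)]$ has a \emph{unique} atom $((u,2),i+1)$ — the identity block forces the only outgoing edge of $(u,1)$ to go to $(u,2)$ — and a unique coatom $((v,1),j-1)$, whereas $D_{\updownarrow}(Q)$ has one atom $x_1$ for \emph{every} atom $x$ of $Q$ and likewise for coatoms. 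Already for the butterfly matrix this fails: $Q=[(1,0),(1,2)]$ is a diamond, so $D_{\updownarrow}(Q)$ has two atoms, while $[((1,1),0),((1,2),3)]$ in $D_{\updownarrow}(P)$ is a three-step chain. The intervals that genuinely are vertical doubles are those of the opposite shape $[((u,2),i),((v,1),j)]$, where $(w,1)$ plays $x_1$, $(w,2)$ plays $x_2$, and the endpoints $(u,2),(v,1)$ are undoubled. Intervals of the other three shapes are not vertical doubles; rather, each embeds as a subinterval of one by stepping one level down to a predecessor $(u',2)$ of $(u,1)$ and/or one level up to a successor $(v',1)$ of $(v,2)$, which exist because a level poset has no sources or sinks. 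Since the condition of Lemma~\ref{lemma_h_double} is inherited by subintervals, the argument then closes. Your write-up neither handles this extension step nor matches the four clauses of the doubling definition to the correct block of $D_{\updownarrow}(M)$.

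Your second, computational route is structurally sound and your block formulas for the powers of $N=D_{\updownarrow}(M)$ are correct, but the telescoping does \emph{not} produce $J$ when $p$ is even: the diagonal blocks of $\sum_{i=1}^{p-1}(-1)^{i-1}\bin(N^i)\bin(N^{p-i})$ collapse to $\bin(M^0)\bin(M^{p/2})=\bin(M^{p/2})$ and $\bin(M^{p/2})\bin(M^0)=\bin(M^{p/2})$ respectively, so the whole sum equals the block-diagonal matrix $\bin(N^p)$, not $J$. The right-hand side of~\eqref{equation_level_half-Eulerian} as printed also reads $J$, but that appears to be a slip in the paper itself: the $(u,v)$-entry of the left-hand side can only be nonzero when $(u,0)\leq(v,p)$, so the correct right-hand side is $\bin(M^p)$ for even $p$ (one sees this also by tracing the stated equivalence with the Eulerian condition for $D_{\leftrightarrow}(M)$, or by testing $M=\begin{pmatrix}0&1\\1&0\end{pmatrix}$ with $p=2$). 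With that correction in place, your telescoping calculation is exactly what is needed and gives a clean direct proof.
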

Combining Corollaries~\ref{corollary_h_double}
and~\ref{corollary_v_double}, we obtain the
following statement.
\begin{corollary}
\label{corollary_h_v_double}
For any square binary matrix $M$, the matrix 
$$
D_{\leftrightarrow}(D_{\updownarrow} (M))=
\begin{pmatrix}
0&I&0&I\\
M&0&M&0\\
0&I&0&I\\
M&0&M&0
\end{pmatrix}
$$
is the adjacency matrix of a level Eulerian poset.
\end{corollary}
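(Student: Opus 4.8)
The plan is to get this statement for free by chaining the two doubling corollaries, with no new computation beyond unwinding a block matrix. First I would let $P$ denote the level poset whose underlying matrix is $M$. By Corollary~\ref{corollary_v_double}, the vertical double $D_{\updownarrow}(P)$ is level half-Eulerian, and its underlying adjacency matrix is, by~\eqref{equation_vertical_doubling_matrix},
$$
D_{\updownarrow}(M)=\begin{pmatrix} 0 & I \\ M & 0 \end{pmatrix}.
$$
Next I would apply Corollary~\ref{corollary_h_double} to the level poset $D_{\updownarrow}(P)$: since it is (level) half-Eulerian, its horizontal double $D_{\leftrightarrow}\bigl(D_{\updownarrow}(P)\bigr)$ is level Eulerian.

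It then only remains to identify the underlying matrix of $D_{\leftrightarrow}\bigl(D_{\updownarrow}(P)\bigr)$. Writing $N:=D_{\updownarrow}(M)$ and applying the horizontal doubling formula~\eqref{equation_horizontal_doubling_matrix} gives
$$
D_{\leftrightarrow}(N)=\begin{pmatrix} N & N \\ N & N \end{pmatrix}
=\begin{pmatrix}
0&I&0&I\\
M&0&M&0\\
0&I&0&I\\
M&0&M&0
\end{pmatrix},
$$
which is precisely the claimed $4n\times 4n$ matrix when $M$ is $n\times n$ (here $I$ is the $n\times n$ identity). This finishes the argument.

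The only point requiring a word of care is the compatibility of terminology: "half-Eulerian" in the hypothesis of Corollary~\ref{corollary_h_double} must be read as "level half-Eulerian" in the sense of Definition~\ref{definition_level_half-Eulerian} (equivalently, every interval is half-Eulerian in the sense of Lemma~\ref{lemma_h_double}), so that the conclusion of Corollary~\ref{corollary_v_double} feeds directly into it. I do not anticipate a genuine obstacle here, since both ingredients are already in place; alternatively, one could bypass the corollaries altogether and verify the Eulerian condition~\eqref{equation_level_Eulerian} for the explicit $4\times 4$ block matrix directly via Lemma~\ref{lemma_level_Eulerian}, but the two-step route is shorter and more transparent.
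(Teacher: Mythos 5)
Your argument is exactly the one the paper intends: Corollary~\ref{corollary_v_double} gives that $D_{\updownarrow}(P)$ is level half-Eulerian, Corollary~\ref{corollary_h_double} then gives that its horizontal double is level Eulerian, and the block-matrix identification via~\eqref{equation_vertical_doubling_matrix} and~\eqref{equation_horizontal_doubling_matrix} is the routine unwinding. The paper records precisely this chain ("Combining Corollaries~\ref{corollary_h_double} and~\ref{corollary_v_double}...") with no further detail, so your proposal matches.
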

\begin{remark}
{\rm
The vertical doubling operation induces a widely used
operation on the underlying digraph. If $G$ is a digraph with adjacency
matrix $M$ 
then $D_{\updownarrow}(M)$ is the adjacency matrix of the digraph
$D_{\updownarrow}(G)$ obtained from $G$ as follows.
\begin{enumerate}
\item Replace each vertex $u$ of $G$ with two copies $u_{1}$ and $u_{2}$. 
\item The edge set of $D_{\updownarrow}(G)$ consists of all edges of the
  form $u_{1}\rightarrow u_{2}$ and of all edges of the form $u_{2}\rightarrow
  v_{1}$ where $u\rightarrow v$ is an edge in $G$.  
\end{enumerate}
Introducing a graph identical to or very similar to $D_{\updownarrow}(G)$ 
is often used in the study of network flows.
These type of constructions  also appear in proofs of the
vertex-disjoint path variant of Menger's theorem
as a way to reduce the
study of vertex capacities to that of edge capacities.
}
\end{remark}

Every half-Eulerian poset arising as a vertical double of a level poset
has an even number of elements at each level, and each canonical block of its
underlying adjacency matrix has even period. This observation may be
complemented by the following analogue of
Theorem~\ref{theorem_even_order}
for half-Eulerian posets.
\begin{theorem}
\label{theorem_odd_order}
Let $P$ be a level half-Eulerian poset
whose underlying matrix $M$ is primitive.
Then the order of the matrix $M$ is odd. 
\end{theorem}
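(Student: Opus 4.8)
The plan is to mirror the proof of Theorem~\ref{theorem_even_order}, but applied to the half-Eulerian condition~\eqref{equation_level_half-Eulerian} instead of the Eulerian condition, and with a parity count that now forces oddness rather than evenness. Since $M$ is primitive, its period is $d=1$ and there is an exponent $\gamma$ with $\bin(M^{t})=J$ for all $t\geq\gamma$. First I would pick any $\delta\geq\gamma$; then $\bin(M^{\delta})=J$, the $n\times n$ all-ones matrix, where $n$ is the order of $M$. I would then write down the half-Eulerian condition~\eqref{equation_level_half-Eulerian} for $p=2\delta$, namely
\[
\sum_{i=1}^{2\delta-1} (-1)^{i-1} \cdot \bin(M^{i}) \cdot \bin(M^{2\delta-i}) = J,
\]
and apply the trace to both sides.

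The key computation is to reduce the left-hand side modulo $2$ using $\trace(AB)=\trace(BA)$ with $A=\bin(M^{i})$, $B=\bin(M^{2\delta-i})$: the terms for $i$ and $2\delta-i$ carry opposite signs $(-1)^{i-1}$ and $(-1)^{2\delta-i-1}$, which are actually equal, so over $\Zzz$ they would add, but we only care about parity, and more usefully the pairing $i\leftrightarrow 2\delta-i$ lets the terms with $i\neq\delta$ be grouped so that each pair contributes an even multiple of a fixed trace modulo $2$. Concretely, for $i<\delta$ we have $\bin(M^{i})\cdot\bin(M^{2\delta-i})$ and its ``mirror'' $\bin(M^{2\delta-i})\cdot\bin(M^{i})$ have the same trace; after collecting, the whole alternating sum on the left collapses modulo $2$ to $(-1)^{\delta-1}\trace\bigl(\bin(M^{\delta})^{2}\bigr)=\trace(J^{2})=\trace(nJ)=n^{2}$. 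Meanwhile $\trace(J)=n$. Hence the traced, mod-$2$ reduced identity reads $n^{2}\equiv n\bmod 2$, which is automatic and therefore yields nothing; so I expect the genuine argument to instead take the trace of~\eqref{equation_level_half-Eulerian} at $p=2\delta$ \emph{before} reduction and read off $\trace\bigl(\bin(M^{\delta})^{2}\bigr)$ against $\trace(J)=n$ after the pairwise cancellation of all the non-central terms, giving $n^{2}\equiv n\bmod 2$ again — so the oddness must come from a \emph{different} value of $p$, most likely an odd $p=2\delta+1$, where the right-hand side is $0$ and the central term of the alternating sum survives.

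So the refined plan is: apply~\eqref{equation_level_half-Eulerian} with $p=2\delta+1$ for $\delta\geq\gamma$. The right-hand side is $0$. On the left, $\bin(M^{i})=\bin(M^{2\delta+1-i})=J$ for all $\gamma\leq i\leq 2\delta+1-\gamma$, so in that range each summand equals $\pm J^{2}=\pm nJ$; the signs $(-1)^{i-1}$ alternate, and since the range has an odd number $2\delta+1-2\gamma+1 = 2(\delta-\gamma+1)$ of indices — wait, that is even, so those terms cancel in pairs. The leftover terms are the $2(\gamma-1)$ ``boundary'' terms with $i<\gamma$ or $i>2\delta+1-\gamma$; pairing $i\leftrightarrow 2\delta+1-i$ and taking the trace, each boundary pair contributes $\trace\bigl(\bin(M^{i})\bin(M^{2\delta+1-i})\bigr)+\trace\bigl(\bin(M^{2\delta+1-i})\bin(M^{i})\bigr)$, which is $2\trace\bigl(\bin(M^{i})\bin(M^{2\delta+1-i})\bigr)\equiv 0\bmod 2$. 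That gives $0\equiv 0$, again vacuous. This tells me the trace-mod-2 trick alone is too weak at generic $p$, and the real content must use a \emph{small} value of $p$ where boundary and bulk interact, combined with the $p=2$ instance $\bin(M)+\bin(M)^{?}$. The hard part — and the crux I would focus on — is identifying the single value of $p$ (I conjecture $p=2\gamma$ together with the structural fact $\bin(M^{\gamma})=\bin(M^{\gamma-1})\cdot\bin(M)$ need not be $J$) at which the trace-mod-2 reduction of~\eqref{equation_level_half-Eulerian} becomes $n\equiv 1\bmod 2$; once that $p$ is pinned down, the rest is the same bookkeeping as in Theorem~\ref{theorem_even_order}. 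I would therefore spend the bulk of the write-up isolating that value of $p$ and verifying that all terms other than a single $\trace(J)=n$ and the constant on the right-hand side cancel modulo $2$.
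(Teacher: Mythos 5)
Your proposal correctly identifies that a naive imitation of the trace argument from Theorem~\ref{theorem_even_order} produces only vacuous identities, but it never finds the actual mechanism, and the conjectured fix (a single cleverly chosen $p$ plus trace-mod-2) is not the right one. The gap is twofold. First, the paper does \emph{not} take traces in this proof at all --- it explicitly remarks afterward that, unlike the even-order case, the trace operation plays no role here. The cyclicity of the trace is precisely what kills all information, as you observed, so you should abandon it rather than search for a value of $p$ where it miraculously survives. Second, the actual argument combines \emph{two} values of $p$, namely $p=2\gamma$ and $p=2\gamma+1$. Writing $X=\sum_{i=1}^{\gamma-1}(-1)^{i-1}\bin(M^{i})$, the half-Eulerian condition~\eqref{equation_level_half-Eulerian} at $p=2\gamma$ collapses (using $\bin(M^{t})=J$ for $\gamma\le t\le 2\gamma$) to the matrix identity $XJ+(-1)^{\gamma-1}J^{2}+JX=J$, while at $p=2\gamma+1$ the two central terms cancel each other and the rest collapses to $XJ-JX=0$. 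Adding these modulo~$2$ eliminates the unknown $XJ+JX$ and leaves $J^{2}\equiv J\bmod 2$; since $J^{2}=nJ$ this forces $n$ odd. So the insight you were missing is to retain the full $n\times n$ matrix equations (where $J^{2}=nJ$ carries the order $n$ in every entry, not just on the diagonal) and to play the even-$p$ and odd-$p$ instances against each other to cancel the boundary contributions.
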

\begin{proof}
Since $M$ is primitive,
let $\gamma$ be the exponent of the matrix $M$.
Recall that $\bin(M^{\gamma})=\bin(M^{\gamma+1})=\cdots=\bin(M^{2\gamma})=J$.
Hence we may rewrite the half-Eulerian
condition~\eqref{equation_level_half-Eulerian}
for $p=2\gamma$
as
\begin{equation}
\label{equation_p-even}
X J + (-1)^{\gamma-1} J^{2} + J X = J,
\end{equation}
where 
$X=\sum_{i=1}^{\gamma-1} (-1)^{i-1} \cdot \bin(M^{i})$.
Similarly, the half-Eulerian condition~\eqref{equation_level_half-Eulerian}
for $p=2\gamma+1$ yields
\begin{equation}
\label{equation_p-odd}
X J - J X = 0.
\end{equation}
Combining equations~\eqref{equation_p-even} and~\eqref{equation_p-odd}
modulo $2$
yields that
$J^{2} \equiv J \bmod 2$.
Since $J^{2} = n J$,
the order $n$ must be odd.
\end{proof}

It is interesting to note that
unlike the proof of Theorem~\ref{theorem_even_order},
the trace operation does not appear
in the argument for
Theorem~\ref{theorem_odd_order}.

\section{Shellable level posets}

Labelings that induce a shelling of the order complex
of a graded poset
have a vast literature. In the case of level posets it is natural to
seek a labeling that may be defined in a uniform fashion for the order
complex of every interval. The next definition is an example of such a
uniform labeling.
\begin{definition}
\label{definition_vertex_shelling}
Let $G$ be a directed graph on the vertex set $V$. A linear
order on $V$ is a {\em vertex shelling order} if for any $u,v\in V$ and
every pair of walks
$u=v_{0}\rightarrow v_{1} \rightarrowdots v_{k}=v$
and $u=v_{0}'\rightarrow v_{1}'\rightarrowdots v_{k}'=v$
of the same length such that $v_{1}'<v_{1}$ holds, there is a $j\in [1,k-1]$
and a vertex $w\in V$ such that $w<v_{j}$ holds and $v_{j-1}\rightarrow w$ and
$w\rightarrow v_{j+1}$ are edges of $G$. 
\end{definition} 
The term ``vertex shelling order'' is justified by the following
result. 
\begin{theorem}
\label{theorem_vertex_shelling}
Let $P$ be a level poset and $<$ be a vertex shelling order on the
vertex set of the underlying digraph of $P$.
Associate to each
maximal chain $(u,i)=(v_{i},i)\prec (v_{i+1},i+1)\prec \cdots \prec
(v_{j-1},j-1)\prec (v_{j},j)=(v,j)$ in the interval $[(u,i),(v,j)]$
the word $v_{i} \cdots v_{j}$.
Then ordering the maximal chains of the interval $[(u,i),(v,j)]$
in $P$ by increasing lexicographic order of
the associated words is a shelling
of the order complex $\Delta([(u,i),(v,j)])$.
\end{theorem}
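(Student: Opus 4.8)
The plan is to show directly that the lexicographic ordering of maximal chains is a shelling by exhibiting, for each maximal chain that is not first, a unique minimal ``new'' face (the facet restriction) and verifying the two defining properties of a shelling. The cleanest route is to recognize the vertex shelling order condition as an instance of Kozlov's $CC$-labeling and invoke his theorem; but since we want a self-contained argument, I would instead argue from scratch using the standard exchange/interpolation technique. Concretely, label the cover relation $(v_{t-1},t-1)\prec(v_t,t)$ in a maximal chain $C$ with the letter $v_t$ (or with the pair recording $v_{t-1}\to v_t$); the word $w(C)=v_i\cdots v_j$ then determines $C$, and the lexicographic order on words is the proposed shelling order.

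First I would set up the facet restriction. Given a maximal chain $C$ with word $v_i v_{i+1}\cdots v_j$, define $R(C)$ to be the set of ``descent'' vertices: those $(v_t,t)$ with $i<t<j$ for which there exists a vertex $w<v_t$ with $v_{t-1}\to w\to v_{t+1}$ edges of $G$. The claim to verify is the usual pair of shelling axioms: (a) $R(C)$ is not contained in any lexicographically earlier maximal chain of the interval, and (b) if $C'$ precedes $C$ lexicographically then the intersection face $C\cap C'$ is contained in some maximal chain $C''$ preceding $C$ with $|C\setminus C''|=1$ — equivalently, every $\sigma\subseteq C$ not lying in an earlier facet contains $R(C)$. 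For (a): if $(v_t,t)\in R(C)$, pick the witness $w<v_t$; replacing $v_t$ by $w$ along the chain through $(v_{t-1},t-1)$ and $(v_{t+1},t+1)$ (legal by Lemma~\ref{lemma_walks}, since $v_{t-1}\to w$ and $w\to v_{t+1}$ are edges) produces a maximal chain agreeing with $C$ below level $t-1$ and having a smaller letter in position $t$, hence lexicographically earlier; and any chain containing all of $R(C)$ must contain $(v_t,t)$, so it cannot be this particular earlier chain — doing this simultaneously over all descent levels shows no single earlier chain contains all of $R(C)$, but more care is needed here, so I would instead argue that a chain containing $R(C)$ and differing from $C$ must first differ at some non-descent level, and then use the vertex shelling order hypothesis to derive a contradiction. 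For (b): suppose $C'<C$ and let $t$ be the first level where the words differ, so $v'_t<v_t$ while $v_s=v'_s$ for $s<t$; the two chains restricted to the interval $[(v_{t-1},t-1),(v_j,j)]$ (with $v_{t-1}=v'_{t-1}$ the common ancestor, $v_j=v'_j$) are two walks of the same length with $v'_t<v_t$, so Definition~\ref{definition_vertex_shelling} supplies a level $t\le r\le j-1$ (after reindexing) and a vertex $w<v_r$ with $v_{r-1}\to w\to v_{r+1}$. Then $(v_r,r)\in R(C)$, and replacing $v_r$ by $w$ in $C$ gives a maximal chain $C''$ that is lexicographically earlier than $C$ (it agrees with $C$ up through level $r-1\ge t-1$, wait — it agrees through $r-1$ which may be $\ge t$, so its word could be larger than $C'$'s but it is still smaller than $C$'s because position $r$ dropped), differs from $C$ in exactly the vertex $(v_r,r)$, and contains $C\cap C'$ since $C\cap C'$ omits $(v_r,r)$ (as $(v_r,r)\notin C'$, the chains already having parted ways at level $t\le r$, provided we chose things so that $v'_r\ne v_r$ — this requires taking $r$ minimal, or noting $C\cap C'\subseteq C$ has no element at any level where $C$ and $C'$ disagree, in particular not at level $r$). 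This establishes $C\cap C'\subseteq C''$ with $|C\setminus C''|=1$.

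The main obstacle I anticipate is the bookkeeping in property (a) — specifically, ruling out that an earlier facet contains \emph{all} of $R(C)$, since each individual descent vertex $(v_t,t)$ can be avoided by a different earlier chain but we must show no single one avoids them all. The resolution is the standard one: a face $\sigma\subseteq C$ lies in an earlier facet if and only if $\sigma$ misses at least one element of $R(C)$ (this biconditional, together with purity of $\Delta$ and the computation in (b), is exactly the equivalent reformulation of shellability recalled in the preamble via the facet restriction $R(F_k)$). So the logical structure is: prove (b) in the strong form ``every $\sigma\subseteq C$ with $R(C)\not\subseteq\sigma$ lies in an earlier facet'' (immediate: drop to the first missed descent vertex $(v_r,r)$ and replace it by its witness $w$), and prove (a) in the form ``$R(C)$ itself lies in no earlier facet'' by showing any facet $C'$ containing $R(C)$ must equal $C$ — if $C'\ne C$, take the first level $t$ where they differ; since $(v_t,t)$ may or may not be in $R(C)$, split into cases, and in the case $(v_t,t)\notin R(C)$ use that $C'$ contains every descent vertex of $C$ above level $t$ too, then apply Definition~\ref{definition_vertex_shelling} to the pair $(C,C')$ to produce a descent vertex of $C$ at some level $>t-1$ \emph{strictly between} the two chains, which $C'$ cannot contain — contradiction. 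I would also remark (matching Example~\ref{example_order_4_shelling}) that when each interval's order complex is additionally a sphere, shellability upgrades homotopy equivalence to homeomorphism via the Danaraj–Klee criterion, but that is outside the scope of this theorem.
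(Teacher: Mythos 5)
Your overall plan is genuinely different in organization from the paper's proof: rather than verifying the first definition of shelling directly (given a later facet $C$ and an earlier facet $C'$, exhibit an earlier facet $C''$ with $C\cap C'\subseteq C''$ and $|C\setminus C''|=1$, which is what the paper does), you identify the facet restriction $R(C)$ explicitly as the set of descent vertices and argue via the equivalent facet-restriction characterization recalled in the preamble. That is a reasonable and potentially cleaner packaging.

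However, there is a concrete gap in the way you invoke the vertex shelling order condition, visible both in your first pass at (b) and, unaddressed, in (a). You apply Definition~\ref{definition_vertex_shelling} to the walks obtained by truncating $C$ and $C'$ at the first disagreement level $t-1$ and running all the way up to $(v_j,j)$. The condition then returns a descent at some level $r$ with $t\leq r\leq j-1$, but nothing forces $r$ to be a level where $C$ and $C'$ actually disagree. If $v_r=v'_r$, then in (b) the replacement chain $C''$ need not contain $C\cap C'$, and in (a) the fact that $C'$ passes through $(v_r,r)$ is no contradiction. You flag the problem in (b) parenthetically, but neither proposed fix secures $v'_r\neq v_r$: the definition hands you a single $r$ with no control over minimality, and the remark that ``$C\cap C'$ has no element at any level where $C$ and $C'$ disagree, in particular not at level $r$'' presupposes exactly what is in question, namely that $r$ is a disagreement level. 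The assertion in (a) that the produced descent is ``strictly between the two chains'' likewise asserts the needed fact without supplying a mechanism.

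The paper's device is to introduce a second index: let $l$ be the least index in $[t+1,j]$ with $v_l=v'_l$ (this exists because $v_j=v'_j$), and apply the vertex shelling order condition to the truncated walks from $(v_{t-1},t-1)$ to $(v_l,l)$. The descent then lands in $[t,l-1]$, a range where $C$ and $C'$ differ at every level, and the argument closes. With this one correction your facet-restriction argument goes through: in (a) the descent $(v_m,m)\in R(C)$ lies at a disagreement level, so $C'\supseteq R(C)$ forces $v'_m=v_m$, a contradiction; and your strong form of (b) is correct as stated and needs no change.
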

\begin{proof}
The maximal chains of an interval $[(u,i),(v,j)]$ in a level poset $P$
are in a one-to-one correspondence with walks $u=v_{i}\rightarrow
v_{i+1}\rightarrowdots v_{j-1}\rightarrow v_{j}=v$
of length $j-i$ in the underlying digraph. Assume that the maximal chain encoded
by the word $v_{i}\cdots v_{j}$ is preceded by the chain encoded by
the word $v_{i}^{\prime} \cdots v_{j}^{\prime}$.
Let $k \in [i+1,j-1]$ be the least
index such that $v_{k}^{\prime} \neq v_{k}$ and let $l \in [k+1,j]$
be the least index such
that $v_{l} = v_{l}^{\prime}$.
Since $v_{i} \cdots v_{j}$ is preceded by $v_{i}^{\prime} \cdots
v_{j}^{\prime}$ in the lexicographic order,
we must have $v_{k}^{\prime} < v_{k}$.
As a consequence of Definition~\ref{definition_vertex_shelling} applied to the walks 
$v_{k-1}\rightarrowdots v_{l}$ and
$v_{k-1}^{\prime} \rightarrow \cdots \rightarrow v_{l}^{\prime}$,
there is an $m\in [k,l-1]$ and a vertex $w$
such that $w<v_m$ holds and $v_{m-1}\rightarrow w$ and
$w \rightarrow v_{m+1}$ are edges of the underlying digraph.
The maximal chain
associated to the word $v_{i}v_{i+1}\cdots v_{m-1} w v_{m+1}\cdots v_{j}$
precedes the maximal chain associated to $v_{i}\cdots v_{j}$, the
intersection of the two chains has codimension one, and contains the 
intersection of the chain associated to $v_{i}\cdots v_{j}$ with the
chain associated to $v_{i}^{\prime} \cdots v_{j}^{\prime}$.   
\end{proof}
\begin{remark}
{\rm
The shelling order used in Theorem~\ref{theorem_vertex_shelling} is induced by
labeling each cover relation $(u,i)\prec(v,i+1)$ by the vertex $v$. This
is an example of Kozlov's {\em $CC$-labelings}~\cite{Kozlov},
discovered independently by Hersh and Kleinberg.
See the Introduction of~\cite{Babson_Hersh}.
Even if the linear order
$<$ is not a vertex shelling order, labeling each cover relation
$(u,i) \prec (v,i+1)$ by the vertex $v$ and
using the linear order $<$ to
lexicographically order the maximal chains in each interval induces an
{\em $FA$-labeling}, as defined by Billera and
Hetyei~\cite{Billera_Hetyei_planar}.
See also \cite{Billera_Hetyei_flag}.
Essentially the same labelings were
used by Babson and Hersh~\cite{Babson_Hersh} to construct a discrete
Morse matching, a technique
which helps determine the homotopy type of the order
complex of an arbitrary graded poset. We refer the reader to the above
cited sources for further information.
}
\end{remark} 
In analogy to Theorem~\ref{theorem_Euler_bound} the
vertex shelling order condition 
needs to be verified only for finitely-many
values of $k$. We prove this for strongly connected digraphs. 
\begin{theorem}
\label{theorem_vertex_shelling_bound}
Let $P$ be the level poset of an indecomposable $n\times n$ matrix $M$
with period~$d$ and index~$\gamma$.
Then the underlying digraph~$G$ of $P$ is vertex 
shellable if and only if it satisfies the vertex shelling order
condition stated in
Definition~\ref{definition_vertex_shelling} for $k\leq \gamma+d$.
\end{theorem}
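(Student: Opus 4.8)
The plan is to recast the vertex shelling order condition at a fixed length~$k$ so that its failure is witnessed by a single walk admitting no local improvement, and then to observe that such a walk of length $k>\gamma+d$ truncates to a witness of length~$k-d$.

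Call a walk $u=v_{0}\rightarrow v_{1}\rightarrowdots v_{k}=v$ in the underlying digraph $G$ \emph{reduced} if $v_{j}=\min\{w:v_{j-1}\rightarrow w\rightarrow v_{j+1}\}$ for every $j\in[1,k-1]$, that is, if it admits no local improvement in the sense of Definition~\ref{definition_vertex_shelling}. For $u,v\in V$ with $\bin(M^{k})_{u,v}=1$ put $\mu_{k}(u,v)=\min\{w:u\rightarrow w,\ \bin(M^{k-1})_{w,v}=1\}$, the least possible second vertex of a length-$k$ walk from $u$ to $v$. Unwinding Definition~\ref{definition_vertex_shelling}, the condition holds at length~$k$ if and only if every reduced walk of length $k$ from $u$ to $v$ has second vertex equal to $\mu_{k}(u,v)$: a length-$k$ walk admits no local improvement precisely when it is reduced, and it has a competing length-$k$ walk with strictly smaller second vertex precisely when its own second vertex exceeds $\mu_{k}(u,v)$.

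Two elementary facts then drive the argument. First, deleting the last vertex of a reduced walk leaves a reduced walk, since the surviving interior triples $(v_{j-1},v_{j},v_{j+1})$ are unchanged; hence deleting the last $d$ vertices of a reduced walk of length $k$ yields a reduced walk of length $k-d$ with the same initial vertex and the same second vertex. Second, for $k\geq\gamma+1$ one has $\mu_{k}(u,v)=\min\{w:u\rightarrow w\}$ whenever $\bin(M^{k})_{u,v}=1$: placing $M$ in the canonical block form~\eqref{equation_block-matrix} (Theorem~\ref{theorem_indecomposable-block-matrix}) with classes $C_{0},\dots,C_{d-1}$, all out-neighbours of $u$ lie in the single class $C_{i+1}$, where $u\in C_{i}$, while $v\in C_{i+k}$ (indices modulo $d$) because $\bin(M^{k})_{u,v}=1$ and $k\geq\gamma$; since $k-1\geq\gamma$ the block of $\bin(M^{k-1})$ on $C_{i+1}\times C_{i+k}$ is either zero or all ones, and it is nonzero because a length-$k$ walk from $u$ to $v$ has an out-neighbour of $u$ as its second vertex. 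Thus $\bin(M^{k-1})_{w,v}=1$ for \emph{every} out-neighbour $w$ of $u$, so $\mu_{k}(u,v)$ is simply the least out-neighbour of $u$, independently of $k$ and of $v$.

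Combining these, suppose the condition holds for every $k\leq\gamma+d$ but fails for some larger $k$; choose $k$ minimal with this property, so $k>\gamma+d$. By the reformulation there is a reduced walk of length $k$ from some $u$ to some $v$ whose second vertex $v_{1}$ exceeds $\mu_{k}(u,v)=\min\{w:u\rightarrow w\}$. Deleting its last $d$ vertices gives a reduced walk of length $k-d\geq\gamma+1$ from $u$ to $v_{k-d}$ with the same second vertex $v_{1}$, which therefore still exceeds $\mu_{k-d}(u,v_{k-d})=\min\{w:u\rightarrow w\}$; hence the condition fails at length $k-d$ as well. Since $k-d<k$, this contradicts the hypothesis if $k-d\leq\gamma+d$ and the minimality of $k$ otherwise. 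Therefore the condition holds at every length, i.e.\ $G$ is vertex shellable. The only step that genuinely uses the hypotheses on $M$ is the stabilisation $\mu_{k}(u,v)=\min\{w:u\rightarrow w\}$ for $k>\gamma$, which rests on the description of high powers of an indecomposable matrix recalled in Section~2; I expect the write-up to spend most of its effort on making the ``reduced walk'' translation of Definition~\ref{definition_vertex_shelling} precise and on the low-order bookkeeping, rather than on any difficult estimate.
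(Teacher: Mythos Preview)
Your argument is correct. Both your proof and the paper's hinge on the same structural fact---that for $t\geq\gamma$ the nonzero blocks of $\bin(M^{t})$ are full matrices of ones---but you deploy it differently. The paper keeps the original pair of walks, takes a prefix $v_{0}\rightarrow v_{1}\rightarrow\cdots\rightarrow v_{\delta+1}$ of the first walk (with $\delta$ the least multiple of $d$ at least $\gamma$), and uses the block structure to manufacture a companion walk $v_{0}\rightarrow v_{1}'\rightarrow v_{2}''\rightarrow\cdots\rightarrow v_{\delta+1}$ sharing the same endpoints; minimality of $k$ then forces a local improvement on the prefix, which lifts back to the original walk. You instead recast failure of the condition at length $k$ as the existence of a single \emph{reduced} walk whose second vertex exceeds $\mu_{k}(u,v)$, observe that truncation preserves reducedness, and use the block structure only to show that $\mu_{k}(u,v)$ stabilises to the least out-neighbour of $u$ once $k>\gamma$. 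Your packaging is a bit cleaner---you never need to build the auxiliary walk through $v_{1}'$---and the reduced-walk reformulation is exactly the viewpoint the paper later adopts in its algebra-of-walks criterion (Theorem~\ref{theorem_walks}), where reduced walks are the monomials surviving modulo $I_{<}$. The paper's version, on the other hand, stays closer to the literal statement of Definition~\ref{definition_vertex_shelling} and reaches the bound $\gamma+d$ in one step rather than by iterated truncation.
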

\begin{proof}
Assume $G$ satisfies the condition stated in Definition~\ref{definition_vertex_shelling}
for $k\leq \gamma+d$ and let $k$ be the least integer for which the
condition is violated. We must
have $k\geq \gamma+d+1$. Consider any pair of walks
$u=v_{0}\rightarrow v_{1}\rightarrowdots v_{k}=v$ and
$u=v_{0}'\rightarrow v_{1}'\rightarrowdots v_{k}'=v$ such that
$v_{1}'<v_{1}$.  As in the proof of Theorem~\ref{theorem_even_order}, let
$\delta\leq \gamma + d-1$ be the least multiple of $d$ which is greater
than or equal to $\gamma$. After rearranging the rows and columns if
necessary, $\bin(M)$ takes the form given in~\eqref{equation_block-matrix} and 
$\bin(M^{\delta})$ takes the form given in~\eqref{equation_M_to_delta}.
As in~\eqref{equation_block-matrix}, we may assume that the block
$Q_{q,q+1}$ occupies the rows indexed by $C_{q}$ and the columns indexed
by $C_{q+1}$. The vertex set of $G$ is the
disjoint union of the sets $C_{0},\ldots, C_{d-1}$ and every edge starting 
in $C_q$ ends in~$C_{q+1}$. Since there is a walk of length $k-1$ from 
$v_{1}'$ to $v_{k}=v$ and there is a walk of length $k-\delta-1$ from
$v_{\delta+1}$
to $v_{k}$, it follows that $v_{1}'$ and $v_{\delta+1}$ belong to the same
set $C_q$. As a consequence of equation~\eqref{equation_M_to_delta}, there is a
walk $v_{1}'\rightarrow v_{2}''\rightarrowdots
v_{\delta}''\rightarrow v_{\delta+1}$ of length $\delta$ from $v_{1}'$ to
$v_{\delta+1}$. Note that $\delta+1\leq \gamma+d<k$. By the minimality
of $k$, the walks $v_{0}\rightarrow v_{1}\rightarrowdots
v_{\delta}\rightarrow v_{\delta+1}$ and 
$v_{0}\rightarrow v_{1}'\rightarrow v_{2}''\rightarrow
\cdots \rightarrow v_{\delta}''\rightarrow v_{\delta+1}$ still
satisfying $v_{1}'<v_{1}$ cannot violate the 
vertex shelling order
condition stated in
Definition~\ref{definition_vertex_shelling}. Hence there is a $j\in [1,\delta]$
and a vertex $w\in V$ such that $w<v_{j}$ holds and $v_{j-1}\rightarrow w$ and
$w\rightarrow v_{j+1}$ are edges of $G$, and the pair of walks
$u=v_{0}\rightarrow v_{1}\rightarrowdots v_{k}=v$ and 
$u=v_{0}'\rightarrow v_{1}'\rightarrowdots v_{k}'=v$ do not
violate the vertex shelling order condition,
in contradiction with our assumption. 
\end{proof}
The verification whether a linear order on the vertices of a digraph is
a vertex shelling order may be automated by introducing the algebra
  of walks.

\begin{definition}
Let $G$ be a digraph with edge set $E$ on the vertex set $V$.
Assume $G$ has no multiple edges. The {\em algebra of walks
$\walks{\Qqq}{G}$} is the quotient of the free non-commutative algebra
over $\Qqq$ generated by the set of variables $\{x_{u,v}\::\: (u,v)\in E\}$
by the ideal generated by the set of monomials
$\{x_{u_{1},v_{1}}x_{u_{2},v_{2}}\::\:v_{1}\neq u_{2} \}$.   
\end{definition}
A vector space basis for $\walks{\Qqq}{G}$ may be given by $1$,
which labels the trivial walk, and all 
monomials $x_{v_{0},v_{1}}x_{v_{1},v_{2}}\cdots x_{v_{k-1},v_{k}}$
such that $v_{0}\rightarrow v_{1}\rightarrowdots v_{k}$ is a walk in $G$.  
\begin{notation}
We introduce $x_{v_{0},v_{1},\ldots, v_{k}}$ as a
shorthand for $x_{v_{0},v_{1}}x_{v_{1},v_{2}}\cdots x_{v_{k-1},v_{k}}$.
\end{notation}

\begin{theorem}
\label{theorem_walks}
Let $G$ be a digraph on the vertex set $V$ 
of cardinality $n$ having no multiple edges and
let $<$ be a linear order on $V$. Let $I_{<}$
be the ideal in $\walks{\Qqq}{G}$
generated by all monomials $x_{v_{0},v_{1}}x_{v_{1},v_{2}}$ such that
there is a vertex $v_{1}^{\prime} < v_{1}$
such that $v_{0} \rightarrow v_{1}^{\prime} \rightarrow v_{2}$
is a walk in $G$.
Let $Z=(z_{u,v})_{u,v\in V}$ be the
$n \times n$ matrix whose rows and columns are indexed by the vertices of
$G$ such that $z_{u,v}=x_{u,v}$ if $(u,v)$ is an edge and it is zero
otherwise. If over the
ring $\walks{\Qqq}{G}/I_{<}$ every entry in 
every power of the matrix $Z$ is a single monomial or zero,
then $<$ is a vertex shelling order.
\end{theorem}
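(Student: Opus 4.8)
The plan is to prove the contrapositive of the implication: if $<$ is \emph{not} a vertex shelling order, then some power of $Z$ has an entry which, modulo $I_<$, is a sum of two or more distinct basis monomials. The first step is to pin down what $Z^k$ computes. Since $\walks{\Qqq}{G}$ is a monomial algebra whose basis consists of $1$ together with the monomials $x_{v_0,v_1,\ldots,v_k}$ indexed by walks of $G$, a direct expansion gives that the $(u,v)$ entry of $Z^k$ equals $\sum_W x_W$, the sum running over all walks $W$ of length $k$ from $u$ to $v$, each $x_W$ a distinct basis monomial with coefficient $1$. Call a walk $v_0\to v_1\to\cdots\to v_k$ \emph{locally minimal} if it contains no consecutive triple $v_{j-1}\to v_j\to v_{j+1}$ (with $1\le j\le k-1$) for which there is a vertex $w<v_j$ with $v_{j-1}\to w$ and $w\to v_{j+1}$ both edges of $G$. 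Because $I_<$ is a monomial ideal and the product of two basis monomials of $\walks{\Qqq}{G}$ is again a basis monomial or zero, $I_<$ is the span of the $x_W$ for which $W$ is not locally minimal; thus $\walks{\Qqq}{G}/I_<$ has basis the $x_W$ with $W$ locally minimal (together with $1$), and the $(u,v)$ entry of $Z^k$ reduces modulo $I_<$ to $\sum_W x_W$ over locally minimal walks $W$ of length $k$ from $u$ to $v$. These monomials are distinct with coefficient $1$, so there is no cancellation, and the hypothesis ``every entry of every power of $Z$ is a single monomial or zero'' is exactly the assertion that between any two vertices there is at most one locally minimal walk of each given length.

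Now suppose $<$ is not a vertex shelling order. Then there are vertices $u,v$, a walk $P\colon u=v_0\to\cdots\to v_k=v$ and a walk $P'\colon u=v_0'\to\cdots\to v_k'=v$ with $v_1'<v_1$ such that no $j\in[1,k-1]$ and $w<v_j$ make both $v_{j-1}\to w$ and $w\to v_{j+1}$ edges; this is precisely the statement that $P$ is locally minimal. The walk $P'$ need not be locally minimal, but it can be reduced to one: whenever $P'$ contains a bad triple $v_{j-1}'\to v_j'\to v_{j+1}'$, replace $v_j'$ by some $w<v_j'$ with $v_{j-1}'\to w$ and $w\to v_{j+1}'$ edges, producing a walk of the same length from $u$ to $v$ whose word $(v_1',\ldots,v_{k-1}')$ has strictly decreased in the lexicographic order. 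Since there are only finitely many such words, this terminates at a locally minimal walk $P''$, and because the first coordinate never increases we have $v_1''\le v_1'<v_1$, hence $P''\ne P$. Thus $P$ and $P''$ are two distinct locally minimal walks of length $k$ from $u$ to $v$, so the $(u,v)$ entry of $Z^k$ is, modulo $I_<$, a sum of at least two distinct basis monomials, contradicting the hypothesis. Therefore $<$ is a vertex shelling order.

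The only step that requires genuine care is the identification of $\walks{\Qqq}{G}/I_<$ with the span of locally minimal walks, i.e.\ verifying that a basis monomial $x_W$ lies in $I_<$ if and only if $W$ contains a bad triple. Everything else is a finite termination argument together with the (automatic) absence of cancellation among the surviving monomials.
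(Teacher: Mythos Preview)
Your proof is correct and uses essentially the same ideas as the paper: the identification of monomials in $I_{<}$ with walks containing a ``bad triple'' (your locally minimal walks are exactly the walks whose monomials are fixed by all of the paper's flip operators $\sigma_i$), together with a lexicographic descent to produce locally minimal walks. The only organizational difference is that the paper argues directly---observing that under the hypothesis the unique surviving monomial is the lex-least walk, so any $P$ with $v_1'<v_1$ is not lex-least and hence lies in $I_{<}$---whereas you argue by contrapositive and instead reduce $P'$ to a second locally minimal walk $P''\neq P$; both routes are equally valid.
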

\begin{proof}
We first calculate the powers of the matrix $Z$ over the ring
$\walks{\Qqq}{G}$. It is straightforward to see by induction on $k$
that the entry $z^{(k)}_{u,v}$ in $Z^k$ is the sum of all monomials 
of the form $x_{v_{0},v_{1},\ldots ,v_{k}}$ where 
$u=v_{0}\rightarrow v_{1}\rightarrow \cdots \rightarrow v_{k}=v$
is a walk of length $k$ from $u$ to $v$. 

The effect of factoring by the ideal $I_{<}$
may be easily described by introducing 
the following {\em flip operators} $\sigma_{i}$ for $i\geq 1$.
Given a monomial $x_{v_{0},v_{1},\ldots ,v_{k}}$, 
set
$$   \sigma_{i}(x_{v_{0},v_{1},\ldots ,v_{k}})
   =
     x_{v_{0},v_{1},\ldots,v_{i-1},v_{i}^{\prime},v_{i+1},\ldots,v_{k}} , $$
if $v_{i}^{\prime}$
is the least vertex in the linear order
such that $v_{i-1} \rightarrow v_{i}^{\prime} \rightarrow v_{i+1}$
is a walk in the digraph.
Clearly a monomial belongs to $I_{<}$ if and only if 
is not fixed by some $\sigma_{i}$. The order $<$ induces a lexicographic
order on all walks of length $k$ from $u$ to $v$. Applying a flip 
$\sigma_{i}$ to a monomial $x_{v_{0},v_{1},\ldots,v_{k}}$
either leaves the monomial unchanged or replaces it with a monomial
that represents a lexicographically smaller walk of length $k$ from $u$
to $v$. In particular, the monomial representing the lexicographically
least walk of length $k$ from $u$ to $v$ does not belong to $I_{<}$ and so 
$z^{(k)}_{u,v}\neq 0$ in $\walks{\Qqq}{G}/I_{<}$ if there is a walk 
of length $k$ from $u$ to $v$. 

Assume first that each $z^{(k)}_{u,v}$ contains at most one monomial that
does not belong to $I_{<}$. As noted above, in this case $z^{(k)}_{u,v}$
contains exactly one monomial not belonging to $I_{<}$ and this monomial
represents the lexicographically least walk of length $k$ from $u$ to
$v$. Consider a pair of walks
$u=v_{0}\rightarrow v_{1}\rightarrowdots v_{k}=v$ and
$u=v_{0}'\rightarrow v_{1}'\rightarrowdots v_{k}'=v$
satisfying $v_{1}'<v_{1}$. Since
$u=v_{0}\rightarrow v_{1}\rightarrowdots v_{k}=v$ is not the lexicographically
least walk of length $k$ from $u$ to $v$, the monomial 
$x_{v_{0},v_{1},\ldots,v_{k}}$ must belong to
$I_{<}$. Thus there is a $j\in [1,k-1]$ and a vertex $w<v_{j}$ such that
$$    \sigma_{j}(x_{v_{0},v_{1},\ldots,v_{k}})
    =
      x_{v_{0},v_{1},\ldots,v_{j-1},w,v_{j+1},\ldots,v_{k}} . $$
This $j$ and $w$ show that the
vertex shelling order
condition stated in
Definition~\ref{definition_vertex_shelling} is satisfied.   

Assume that $<$ is a vertex shelling order and, by way of
contradiction, assume that $z^{(k)}_{u,v}$ contains at least two monomials 
$x_{v_{0},v_{1},\ldots,v_{k}}$ and
$x_{v_{0},v_{1}',\ldots, v_{k-1}',v_{k}}$ not belonging to $I_{<}$.
Without loss of generality we may assume $v_{0}=v_{0}', v_{1}=v_{1}',
\ldots, v_{i-1}=v_{i-1}'$ and $v_{i}'<v_{i}$. Applying the
vertex shelling order condition given
to the pair of walks $v_{i-1}\rightarrow
v_{i}\rightarrowdots v_{k}$ and  $v_{i-1}\rightarrow
v_{i}'\rightarrowdots v_{k}$, we obtain a $j\in
[i,k-1]$ and a vertex $w$ such that $w<v_{j}$ and $v_{j-1}\rightarrow w$
and $w\rightarrow v_{j+1}$ are edges. But then 
$\sigma_{j}(x_{v_{0},v_{1},\ldots,v_{k}})\neq
x_{v_{0},v_{1},\ldots,v_{k}}
$
and $x_{v_{0},v_{1},\ldots,v_{k}}$ does not belong to $I_{<}$.
\end{proof}
Using Theorem~\ref{theorem_vertex_shelling_bound}, the proof of Theorem~\ref{theorem_walks}
may be modified to show the following.
\begin{proposition}
\label{proposition_walks_bound}
Let $M$ be an indecomposable matrix having
period $d$ and index $\gamma$. When applying
Theorem~\ref{theorem_walks} to decide whether an order of the vertices is a
vertex shelling order, one needs to verify the condition on the matrices
$Z^{k}$ only for $k \leq \gamma+d$.
\end{proposition}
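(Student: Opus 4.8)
The plan is to route the bound through Theorem~\ref{theorem_vertex_shelling_bound}, after observing that the proof of Theorem~\ref{theorem_walks} already works ``one length at a time''. For a fixed $k$, let $P(k)$ be the assertion that every entry of $Z^{k}$ is a single monomial or zero in the ring $\walks{\Qqq}{G}/I_{<}$, and let $S(k)$ be the instance of the vertex shelling order condition of Definition~\ref{definition_vertex_shelling} obtained by restricting attention to pairs of walks of length $k$. The first half of the proof of Theorem~\ref{theorem_walks} shows that $P(k)$ by itself implies $S(k)$: granting $P(k)$, the unique monomial of $z^{(k)}_{u,v}$ not lying in $I_{<}$ must encode the lexicographically least walk of length $k$ from $u$ to $v$, so the monomial of any non-minimal such walk lies in $I_{<}$ and is therefore moved by some flip operator $\sigma_{j}$, which furnishes the index $j$ and the vertex $w$ demanded by Definition~\ref{definition_vertex_shelling}. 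The second half of the same proof shows conversely that $\{S(j)\::\:1\leq j\leq k\}$ implies $P(k)$: were $z^{(k)}_{u,v}$ to contain two distinct monomials outside $I_{<}$, aligning them up to their first discrepancy, at some position $i\geq 1$, and applying $S(k-i+1)$ to the two subwalks of length $k-i+1$ would produce a flip exhibiting one of these monomials as an element of $I_{<}$, a contradiction.

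Granting this refinement, the proposition follows in three steps. First, assume $P(k)$ holds for all $k\leq\gamma+d$; by the forward observation above, $S(k)$ holds for all $k\leq\gamma+d$. Second, Theorem~\ref{theorem_vertex_shelling_bound} promotes this to the statement that the given linear order is a vertex shelling order, that is, $S(k)$ holds for every $k\geq 1$. Third, by the converse observation, $S(j)$ for all $j$ yields $P(k)$ for every $k$. Thus the hypothesis of Theorem~\ref{theorem_walks} is met, and in particular it was enough to examine the matrices $Z^{k}$ for $k\leq\gamma+d$.

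The step needing the most care is the claim that the two halves of the proof of Theorem~\ref{theorem_walks} really do decouple by length, rather than covertly using all powers of $Z$ at once. The forward implication is clean because the underlying fact---that the monomial of the lexicographically least walk of length $k$ is never in $I_{<}$---is proved for each $k$ separately from the monotonicity of the operators $\sigma_{i}$ under the induced lexicographic order. In the converse implication one must check that the instance of Definition~\ref{definition_vertex_shelling} invoked has length parameter $k-i+1\leq k$, so that only $S(j)$ with $j\leq k$ is needed; this is immediate since the first discrepancy between the two monomials occurs at some position $i\geq 1$. Once this bookkeeping is in place, no combinatorial input beyond Theorems~\ref{theorem_walks} and~\ref{theorem_vertex_shelling_bound} is required.
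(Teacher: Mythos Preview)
Your proposal is correct and follows precisely the route the paper indicates: it unpacks the proof of Theorem~\ref{theorem_walks} into the per-length equivalences $P(k)\Rightarrow S(k)$ and $\{S(j):j\leq k\}\Rightarrow P(k)$, and then feeds the bound from Theorem~\ref{theorem_vertex_shelling_bound} through these implications. This is exactly the ``modification'' the paper alludes to, carried out in full detail.
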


For a shellable Eulerian posets we can conclude more.
\begin{theorem}
If $P$ is an Eulerian poset of rank $n+1$ whose order complex
$\Delta(P - \{\hz,\ho\})$ is shellable, then
the order complex is homeomorphic to an $n$-dimensional sphere.
\label{theorem_shellable_Eulerian}
\end{theorem}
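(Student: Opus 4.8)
The plan is to combine the shellability hypothesis with the Eulerian hypothesis via the standard result that a shellable pseudomanifold all of whose intervals (links) have the ``right'' Euler characteristic is a sphere. Concretely, the argument has three ingredients: (1) a shellable complex is homotopy equivalent to a wedge of spheres of its top dimension, so its reduced homology is concentrated in the top degree and is free; (2) the order complex of an Eulerian poset, together with all the order complexes of its open intervals (which are the links of faces), has reduced Euler characteristic $(-1)^{\dim}$; this is exactly the Euler--Poincar\'e relation repackaged via $\mu(x,y)=(-1)^{\rho(x,y)}$; (3) a shellable $(n-1)$-sphere-like complex is in fact a \emph{combinatorial} (hence topological) sphere, by a theorem of Danaraj and Klee: a shellable pseudomanifold in which every codimension-one face lies in exactly two facets is a combinatorial sphere.

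\textbf{First} I would record that $\Delta(P-\{\hz,\ho\})$ is a pure $(n-1)$-dimensional simplicial complex (as already observed in the excerpt for graded posets of rank $n+1$), and that it is a pseudomanifold, i.e. every $(n-2)$-face is contained in exactly two facets. This is the key place the Eulerian condition enters at the bottom level: an $(n-2)$-face corresponds to a maximal chain with exactly one rank missing, say rank $k$, and the number of ways to fill in rank $k$ is the number of atoms of a length-two interval $[x,y]$, which by the Euler--Poincar\'e relation $1 - \#\{z : x<z<y\} + 1 = 0$ equals exactly $2$. \textbf{Next}, I would invoke the theorem of Danaraj and Klee (\emph{Which spheres are shellable?}, Ann. Discrete Math.\ 2 (1978)): a shellable pseudomanifold (without boundary) of dimension $n-1$ is a combinatorial $(n-1)$-sphere. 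Since the complex is shellable by hypothesis and a pseudomanifold by the previous step, and since shellability plus the wedge-of-spheres conclusion forces it to be nonempty and connected in the relevant range, this theorem applies directly and yields that $\Delta(P-\{\hz,\ho\})$ is PL-homeomorphic, hence homeomorphic, to $S^{n-1}$.

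\textbf{The main obstacle} is verifying that the complex is genuinely a pseudomanifold \emph{without boundary} — that is, confirming the ``exactly two facets'' count rather than ``at most two.'' This is where one must use that $P$ is Eulerian and not merely graded: for a general graded poset a length-two interval can have any number of atoms, and the complex would then fail to be a pseudomanifold or could have boundary. The Euler--Poincar\'e relation for every length-two interval pins this count to $2$, and one should also note that every $(n-2)$-face \emph{extends} to a facet (every maximal chain of $P_S$ for $|S| = n-1$ sits inside a maximal chain of $P$), which holds because $P$ is graded. With those two observations in hand the Danaraj--Klee hypotheses are met. An alternative, more self-contained route avoiding the citation would be to argue by induction on $n$: a shelling order realizes $\Delta$ as built up from a ball by attaching one top simplex at a time along a portion of its boundary, the last simplex being attached along its \emph{entire} boundary precisely because the Euler characteristic is that of a sphere (this forces the last facet's restriction face to be the whole facet), giving $S^{n-1}$ as a union of two balls glued along their boundary spheres; the inductive hypothesis is applied to the links, which are themselves shellable Eulerian order complexes of smaller open intervals. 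I would present the Danaraj--Klee route as the clean proof and perhaps remark on the inductive one.
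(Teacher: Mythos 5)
Your proposal is correct, and it reaches the conclusion by a genuinely different (and cleaner) route than the paper. Both start identically: the Eulerian condition for rank-$2$ intervals forces every $(n-2)$-face of the order complex to lie in exactly two facets, so the complex is a pseudomanifold without boundary. You then cite the Danaraj--Klee theorem (a shellable pseudomanifold without boundary is a PL sphere), which finishes the argument in one stroke. The paper instead unpacks this: it uses the Eulerian condition a second time to compute the reduced Euler characteristic $\mu(P)=(-1)^{n+1}$, concludes the complex is homotopy equivalent to exactly one sphere, deduces that precisely one facet changes the topology in the shelling, moves that facet to the end, observes that $F_1\cup\cdots\cup F_{t-1}$ is a collapsible pseudomanifold with boundary, invokes a theorem of J.H.C.\ Whitehead (via Forman's survey) to recognize it as a ball, and glues the last facet back on. This is essentially a re-derivation of Danaraj--Klee. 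Your route avoids the Euler-characteristic computation (your ingredient (2)) and the nontrivial facet-reordering step, since Danaraj--Klee needs neither: shellability plus the pseudomanifold condition already force exactly one closure facet and it must be last. Your sketched ``alternative, more self-contained'' induction is in spirit the paper's proof. One last note: you correctly identify the target as $S^{n-1}$, since the order complex of a rank-$(n+1)$ poset is $(n-1)$-dimensional; the theorem as stated in the paper (and its proof, which speaks of an ``$n$-dimensional ball'') appear to be off by one in the dimension.
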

\begin{proof}
Since every interval of rank $2$ in an Eulerian poset
is a diamond, every subfacet of the order complex
is contained in exactly two facets. Hence the
order complex $\Delta(P - \{\hz,\ho\})$ is a pseudo-manifold
without boundary.
Let $F_{1}, \ldots, F_{t}$ be a shelling of the order complex.
Since the reduced Euler characteristic equals
the M\"obius function $\mu(P)$, which in turn equals $(-1)^{n+1}$
as $P$ is Eulerian,
the order complex is homotopy equivalent to one sphere.
Hence there is one facet that changes the topology
during shelling.
We can move this facet to be last facet of the shelling,
that is, $F_{t}$.
Hence the previous facets form
a contractible complex.
The shelling implies that
the complex $F_{1} \cupdots F_{t-1}$ is collapsible to a point.
Lastly, the complex $F_{1} \cupdots F_{t-1}$ is
a pseudo-manifold with boundary.
By a result of J.H.C.\ Whitehead~\cite[Theorem~1.6]{Forman},
such a pseudo-manifold is homeomorphic to an $n$-dimensional ball.
The result follows by gluing back the last facet $F_{t}$ along
the common boundary.
\end{proof}

\begin{example}
{\rm
Consider again the level poset shown in Figure~\ref{figure_main}.
See also Example~\ref{example_order_4}.
Consider the linear
order $1<2<3<4$ where $i$ is the vertex associated to row (and column)
$i$. We claim this is a vertex shelling order. By 
Proposition~\ref{proposition_walks_bound} we need to check that all entries of
$Z^k$ are zero or monomials for $k\leq 4$.  
Direct calculation shows
$$
Z=
\begin{pmatrix}
x_{1,1} & x_{1,2} & x_{1,3} & 0\\
x_{2,1} & 0       & x_{2,3} & x_{2,4}\\
0       & x_{3,2} & 0       & x_{3,4}\\
x_{4,1} & 0       & x_{4,3} & x_{4,4}
\end{pmatrix}
,\quad
Z^{2}=
\begin{pmatrix}
x_{1,1,1} & x_{1,1,2} & x_{1,1,3} & x_{1,2,4}\\
x_{2,1,1} & x_{2,1,2} & x_{2,1,3} & x_{2,3,4}\\
x_{3,2,1} & 0         & x_{3,2,3} & x_{3,2,4}\\
x_{4,1,1} & x_{4,1,2} & x_{4,1,3} & x_{4,3,4}
\end{pmatrix}
$$
$$
Z^{3}=
\begin{pmatrix}
x_{1,1,1,1} & x_{1,1,1,2} & x_{1,1,1,3} & x_{1,1,2,4}\\
x_{2,1,1,1} & x_{2,1,1,2} & x_{2,1,1,3} & x_{2,1,2,4}\\
x_{3,2,1,1} & x_{3,2,1,2} & x_{3,2,1,3} & x_{3,2,3,4}\\
x_{4,1,1,1} & x_{4,1,1,2} & x_{4,1,1,3} & x_{4,1,2,4}
\end{pmatrix}
\quad\mbox{and}
$$
$$
Z^{4}=
\begin{pmatrix}
x_{1,1,1,1,1} & x_{1,1,1,1,2} & x_{1,1,1,1,3} & x_{1,1,1,2,4}\\
x_{2,1,1,1,1} & x_{2,1,1,1,2} & x_{2,1,1,1,3} & x_{2,1,1,2,4}\\
x_{3,2,1,1,1} & x_{3,2,1,1,2} & x_{3,2,1,1,3} & x_{3,2,1,2,4}\\
x_{4,1,1,1,1} & x_{4,1,1,1,2} & x_{4,1,1,1,3} & x_{4,1,1,2,4}
\end{pmatrix}  . $$
Thus we conclude that each interval is shellable
and hence by Theorem~\ref{theorem_shellable_Eulerian}
that the order complex of each interval is
homeomorphic to a sphere.
}
\label{example_order_4_shelling}
\end{example}

We conclude this section with an example of a level Eulerian poset that
has a strongly connected underlying digraph and non-shellable intervals. 

\begin{example}
{\rm
Consider the level poset $P$ whose underlying graph has the adjacency matrix 
$$
M=
\begin{pmatrix}
1 & 1 & 0\\
0 & 0 & 1\\
1 & 0 & 0\\
\end{pmatrix}
. $$
We then have
$$
\bin(M^{2})=
\begin{pmatrix}
1 & 1 & 1\\
1 & 0 & 0\\
1 & 1 & 0\\
\end{pmatrix} ,
\quad
\bin(M^{3})=
\begin{pmatrix}
1 & 1 & 1\\
1 & 1 & 0\\
1 & 1 & 1\\
\end{pmatrix}
$$
and
$\bin(M^{4})=J$.
Thus $M$ is primitive and the exponent is $\gamma=4$.
By the half-Eulerian
analogue of Theorem~\ref{theorem_Euler_bound}, to check whether $P$ is 
half-Eulerian we only need to verify the half-Eulerian
condition~\eqref{equation_level_half-Eulerian}
for $p<10$. Furthermore, just as
in the Eulerian case, we only need to
check~\eqref{equation_level_half-Eulerian}
holds for even values of $p$.  
We leave this to the reader as an exercise. The level poset $P$ is 
half-Eulerian, so its horizontal double $D_{\leftrightarrow}(P)$ is
Eulerian. Let $1$ denote the vertex corresponding
to the first row in $M$
and consider the interval $[(1,0),(1,3)]$ in
$D_{\leftrightarrow}(P)$. The order complex of this interval has two
connected components and is thus not shellable.  
}
\end{example}

\section{The $\ab$- and $\cd$-series of level and level Eulerian posets}

Level Eulerian posets are infinite in nature, so
one must encode their face incidence data using a non-commutative
series.
For a reference on non-commutative formal power series,
see~\cite[Section~6.5]{Stanley_EC_2}.
In this section we review the notions of the flag $h$-vector,
the $\ab$-index for finite posets
and,
in the case the poset is Eulerian,
the $\cd$-index.
We extend these notions to the $\ab$-series and
$\cd$-series of a level Eulerian poset.
The main result of this section is that
the $\cd$-series of any level
Eulerian poset is a rational generating function.

For a finite graded poset $P$ of rank $m+1$,
the flag $f$-vector has $2^{m}$
entries. When the poset $P$ is Eulerian, there are linear relations
among these entries known as the generalized
Dehn--Sommerville relations~\cite{Bayer_Billera}.
They describe a subspace whose dimension
is given by the $m$th Fibonacci number. The $\cd$-index offers
an explicit bases for this subspace. In order to describe it, we
begin by defining the flag $h$-vector and the $\ab$-index.
The {\em flag $h$-vector} of the poset $P$
is defined by the invertible relation
$$      h_{S} = \sum_{T \subseteq S} (-1)^{|S-T|} \cdot f_{T} . $$
Hence the flag $h$-vector encodes the same information as the
flag $f$-vector.
Let $\av$ and $\bv$ be two non-commutative variables
each of degree one.
For $S$ a subset of $\{1, \ldots, m\}$ define the
$\ab$-monomial $u_{S} = u_{1} u_{2} \cdots u_{m}$
by letting $u_{i} = \bv$ if $i \in S$ and
$u_{i} = \av$ otherwise. The {\em $\ab$-index} of the poset $P$
is defined by
$$    \Psi(P)
    =
      \sum_{S} h_{S} \cdot u_{S}   ,  $$
where the sum is over all subsets
$S \subseteq \{1, \ldots, m\}$.

Bayer and Klapper~\cite{Bayer_Klapper} proved that
for an Eulerian poset $P$ the $\ab$-index can be written
in terms of the non-commutative variables $\cv = \av + \bv$ and
$\dv = \av\bv + \bv\av$ of degree
one and two, respectively.
There are several proofs of this fact in the
literature~\cite{Ehrenborg_k-Eulerian,Ehrenborg_Readdy_homology,Stanley_d}.
When $\Psi(P)$ is written in terms of $\cv$ and $\dv$,
it is called the {\em $\cd$-index} of the poset $P$.

Another way to approach the $\ab$-index is by
chain enumeration. For a chain in the poset $P$
$c = \{\hz = x_{0} < x_{1} < \cdots < x_{k+1} = \ho\}$
define its weight to be
$$  \wt(c)
       =
    (\av-\bv)^{\rho(x_{0},x_{1}) - 1}
       \cdot
    \bv
       \cdot
    (\av-\bv)^{\rho(x_{1},x_{2}) - 1}
       \cdot
    \bv
       \cdots
    \bv
       \cdot
    (\av-\bv)^{\rho(x_{k},x_{k+1}) - 1}    .   $$
The $\ab$-index is then given by
\begin{equation}
     \Psi(P) = \sum_{c} \wt(c)    ,
\label{equation_chain_definition}
\end{equation}
where the sum ranges over all chains $c$ in the poset $P$.

For a level poset $P$ and two vertices $x$ and $y$ in the underlying
digraph, set $\Psi([(x,i),(y,j)])$ to be zero if
$(x,i) \not\leq (y,j)$. Define the {\em $\ab$-series
$\Psi_{x,y}$} of the level poset $P$
to be the non-commutative formal power series
$$   \Psi_{x,y}
   =
     \sum_{m \geq 0} \Psi([(x,0),(y,m+1)])   .   $$
Since the $m$th term in this sum is homogeneous of degree $m$,
the sum is well-defined.

Finally, for a level poset $P$ let $\Psi$ be the matrix
whose $(x,y)$ entry is
the $\ab$-series $\Psi_{x,y}$.
Our goal is to show 
that the $\ab$-series $\Psi_{x,y}$
is a rational non-commutative formal power series.

Let $K(t)$ denote the matrix
$$ K(t) = M + \bin(M^{2}) \cdot t + \bin(M^{3}) \cdot t^{2}
                        +  \cdots , $$
and let $K_{x,y}(t)$ denote the $(x,y)$ entry of the matrix $K(t)$.
Observe $K_{x,y}(t)$ is the generating function having the
coefficient of $t^{m-1}$ to be $1$ if there is a walk of
length $m$ from
the vertex $x$ to the vertex $y$ and zero otherwise.

To prove the main result of this section we need
the following classical result
due to
Skolem~\cite{Skolem},
Mahler~\cite{Mahler} and
Lech~\cite{Lech}.
The formulation here is
the same as that given
in~\cite[Chapter~4, Exercise~3]{Stanley_EC_1}).
In fact, since we are only dealing with integer coefficients,
it is sufficient to use
Skolem's original result~\cite{Skolem}.
\begin{theorem}[Skolem--Mahler--Lech]
Let $\sum_{n \geq 0} a_{n} \cdot t^{n}$ be a rational generating
function and let
$$    b_{n}
   =
      \left\{ \begin{array}{c l}
                  1 & \text{ if } a_{n} \neq 0 , \\
                  0 & \text{ if } a_{n} = 0 .
              \end{array} \right. $$
Then the generating function
$\sum_{n \geq 0} b_{n} \cdot t^{n}$ is rational.
\label{theorem_Skolem_Mahler_Lech}
\end{theorem}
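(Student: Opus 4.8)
The plan is to prove the Skolem--Mahler--Lech theorem by the classical $p$-adic interpolation argument of Skolem; since the coefficients $a_{n}$ are integers --- or more generally rational --- one never leaves $\Qqq$, so the extensions of Mahler and Lech are not needed. \emph{Reduction to a matrix power.} Rationality of $\sum_{n\geq 0} a_{n} t^{n}$ is equivalent to $(a_{n})$ satisfying a linear recurrence with constant rational coefficients. First I would record this as a matrix identity: there exist an \emph{invertible} square matrix $A$ over $\Qqq$, a row vector $u$ and a column vector $v$ over $\Qqq$ such that $a_{n} = u A^{n} v$ for all $n \geq n_{0}$; one may take $A$ to be the companion matrix of the denominator of $\sum_{n} a_{n} t^{n}$, which is invertible since that denominator has nonzero leading coefficient. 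As altering $(b_{n})$ on the finite set $\{0, \ldots, n_{0}-1\}$ does not affect rationality of $\sum_{n} b_{n} t^{n}$, it suffices to treat $a_{n}$ for $n \geq n_{0}$.

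\emph{$p$-adic interpolation along residue classes.} Next I would choose an odd prime $p$ dividing none of the denominators occurring in $A$, $u$, $v$ and not dividing $\det A$, so that $A$ lies in $\mathrm{GL}_{k}(\Zzz_{p})$ with $u$, $v$ over $\Zzz_{p}$. Since $\mathrm{GL}_{k}(\Zzz/p\Zzz)$ is finite, $A$ has finite multiplicative order modulo $p$; pick $M$ with $A^{M} = I + pB$ for a matrix $B$ over $\Zzz_{p}$. For $p$ odd the matrix logarithm $L = \log(I + pB) = pB - \tfrac{1}{2}(pB)^{2} + \cdots$ converges and has all entries divisible by $p$. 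For each residue $r \in \{0, 1, \ldots, M-1\}$ set
$$
   f_{r}(z) \; = \; u \, A^{r} \, \exp(zL) \, v
            \; = \; \sum_{k \geq 0} \frac{1}{k!} \bigl( u \, A^{r} L^{k} v \bigr) \, z^{k} .
$$
Because $v_{p}\bigl( u A^{r} L^{k} v / k! \bigr) \geq k - v_{p}(k!) \to \infty$, this is a power series in $z$ converging on all of $\Zzz_{p}$, and by construction $f_{r}(n) = u A^{r} (A^{M})^{n} v = a_{Mn+r}$ for every non-negative integer $n$.

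\emph{Counting zeros and concluding.} By Strassmann's theorem a nonzero $p$-adic power series converging on $\Zzz_{p}$ has only finitely many zeros there. Hence for each residue $r$ either $f_{r} \equiv 0$, in which case $a_{n} = 0$ for every $n \equiv r \pmod{M}$, or $a_{n} \neq 0$ for all but finitely many $n \equiv r \pmod{M}$. Thus the support $\{\, n : a_{n} \neq 0 \,\}$ agrees, outside a finite set, with a union of complete residue classes modulo $M$, so the $0/1$-sequence $(b_{n})$ is eventually periodic with period $M$; an eventually periodic sequence has a rational generating function.

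The hard part is the middle pair of steps: manufacturing the $p$-adic analytic interpolation $f_{r}$ (which rests on the finiteness of $\mathrm{GL}_{k}(\Zzz/p\Zzz)$ and on the $p$-adic convergence of $\log$ and $\exp$, hence on choosing $p$ suitably) and then bounding its zeros via Strassmann's theorem. The reduction to a matrix power and the final passage from "eventually periodic support" to "rational generating function" are routine bookkeeping.
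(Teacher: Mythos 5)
The paper does not prove this theorem at all: it is cited as a classical result, with references to the original papers of Skolem, Mahler and Lech and to Stanley's exercise, and the authors explicitly remark that Skolem's original result already suffices since the coefficients arising in their application are integers. Your argument is precisely Skolem's $p$-adic proof, and it is correct: the reduction to $a_{n}=uA^{n}v$ with $A$ invertible, the choice of an odd prime $p$ with $A\in\mathrm{GL}_{k}(\Zzz_{p})$, the finite order $M$ of $A$ modulo $p$ so that $A^{M}=I+pB$, the convergence of $L=\log(I+pB)$ with entries in $p\Zzz_{p}$, the interpolation $f_{r}(z)=uA^{r}\exp(zL)v$ whose coefficients satisfy $v_{p}\geq k-v_{p}(k!)\to\infty$ (this is where $p$ odd is used), and the application of Strassmann's theorem to conclude that each $f_{r}$ is either identically zero or has only finitely many zeros in $\Zzz_{p}$. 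This makes the support of $(a_{n})$, outside a finite set, a union of residue classes modulo $M$, so $(b_{n})$ is eventually periodic and hence has a rational generating function. Your note that the Mahler and Lech extensions are unnecessary here matches the authors' own remark. In short: you supplied a proof where the paper has none, and the proof you chose is the one the paper implicitly points to.
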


\begin{lemma}
The generating function $K_{x,y}(t)$ is rational.
\end{lemma}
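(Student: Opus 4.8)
The plan is to recognize $K_{x,y}(t)$ as obtained from the entries of a rational matrix generating function by the ``support'' operation, and then to invoke the Skolem--Mahler--Lech theorem (Theorem~\ref{theorem_Skolem_Mahler_Lech}). First I would consider the matrix-valued generating function
$$
    L(t) = \sum_{m \geq 1} M^{m} \cdot t^{m-1} = M \cdot (I - tM)^{-1},
$$
which is rational in the sense that each entry $L_{x,y}(t)$ is a rational function of $t$ with integer coefficients: indeed $(I-tM)^{-1}$ has entries in $\Qqq(t)$ by Cramer's rule, since $\det(I-tM)$ is a polynomial in $t$ with constant term $1$, hence a nonzero element of $\Qqq[t]$. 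Therefore the coefficient sequence of each $L_{x,y}(t)$, namely $\bigl((M^{m})_{x,y}\bigr)_{m \geq 1}$, is the coefficient sequence of a rational generating function.

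Next I would relate $K_{x,y}(t)$ to $L_{x,y}(t)$. By definition the coefficient of $t^{m-1}$ in $K_{x,y}(t)$ is $\bin(M^{m})_{x,y}$, which is $1$ if $(M^{m})_{x,y} > 0$ and $0$ otherwise; since $M$ has nonnegative entries, $(M^{m})_{x,y} \neq 0$ is equivalent to $(M^{m})_{x,y} > 0$. Thus the coefficient sequence of $K_{x,y}(t)$ is exactly the ``$b_{n}$ sequence'' associated, in the notation of Theorem~\ref{theorem_Skolem_Mahler_Lech}, to the ``$a_{n}$ sequence'' $\bigl((M^{m})_{x,y}\bigr)_{m \geq 1}$ (after an index shift $n = m-1$, which does not affect rationality). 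Since that latter sequence comes from the rational generating function $L_{x,y}(t)$, the Skolem--Mahler--Lech theorem applies and shows that $K_{x,y}(t) = \sum_{m \geq 1} b_{m-1} t^{m-1}$ is rational.

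I expect the main conceptual point — and the only place any care is needed — to be the verification that $(I-tM)^{-1}$ genuinely has rational-function entries over $\Qqq$, i.e. that $\det(I - tM)$ is not the zero polynomial; this is immediate from its constant term being $1$. Everything else is bookkeeping: the nonnegativity of $M$ converting ``$\neq 0$'' into ``$> 0$,'' and the harmless shift of index between the exponent $m$ and the power $t^{m-1}$. No strong connectivity or Eulerian hypothesis on $M$ is needed here; the statement holds for an arbitrary square matrix $M$ with nonnegative integer entries.
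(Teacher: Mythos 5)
Your proposal is correct and matches the paper's argument essentially verbatim: your matrix $L(t)$ is the paper's $G(t) = M(I - Mt)^{-1}$, and both proofs pass from its rational entries to $K_{x,y}(t)$ via the Skolem--Mahler--Lech theorem. The additional details you spell out (Cramer's rule, $\det(I-tM)\neq 0$, the index shift) are minor bookkeeping that the paper leaves implicit.
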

\begin{proof}
Let $G(t)$ denote the matrix
$$ G(t) = M + M^{2} \cdot t + M^{3} \cdot t^{2} + \cdots
      = M \cdot (I - M \cdot t)^{-1}  $$
and let $G_{x,y}(t)$ denote the $(x,y)$ entry of this matrix.
Clearly $G_{x,y}(t)$ is the generating function for
the number of walks from the vertex $x$ to the vertex $y$
where the coefficient of $t^{m-1}$ is the number of walks
of length $m$.
Furthermore, it is clear that $G_{x,y}(t)$ is
a rational function.
Hence by Theorem~\ref{theorem_Skolem_Mahler_Lech}
the result follows.
\end{proof}

When the underlying digraph is strongly connected
and has period $d$, it easy to observe that
$K_{x,y}(t)$ is the rational function $t^{r}/(1-t^{d})$
minus a finite number of terms, where the lengths of the walks
from $x$ to $y$ are congruent to $r$ modulo $d$.

\begin{theorem}
The $\ab$-series $\Psi_{x,y}$ is
a rational generating function in the non-commutative variables
$\av$ and $\bv$.
\label{theorem_ab_rational}
\end{theorem}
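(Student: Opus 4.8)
The plan is to derive a closed matrix formula for $\Psi$ as a matrix-geometric series whose single building block is the matrix $K(t)$ of the previous lemma, evaluated at the degree-one non-commutative element $\av-\bv$, and then to read off rationality of the entries $\Psi_{x,y}$ from the already established rationality of the scalar series $K_{x,y}(t)$ together with the standard closure properties of rational non-commutative power series.

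First I would expand $\Psi_{x,y}$ using the chain definition~\eqref{equation_chain_definition} of the $\ab$-index. A chain
\[
  \hz=(x,0)<(v_{1},i_{1})<\cdots<(v_{k},i_{k})<(y,m+1)=\ho
\]
in the interval $[(x,0),(y,m+1)]$ exists exactly when, setting $v_{0}=x$, $i_{0}=0$, $v_{k+1}=y$, $i_{k+1}=m+1$, there is a walk of length $i_{j+1}-i_{j}$ from $v_{j}$ to $v_{j+1}$ in the underlying digraph for each $j=0,\ldots,k$, and its weight then equals
\[
  (\av-\bv)^{i_{1}-i_{0}-1}\cdot\bv\cdot(\av-\bv)^{i_{2}-i_{1}-1}\cdot\bv\cdots\bv\cdot(\av-\bv)^{i_{k+1}-i_{k}-1}.
\]
Summing these weights over all $m\ge 0$, all $k\ge 0$, all intermediate vertices and all admissible rank sequences, and noting that $K_{u,v}(\av-\bv)=\sum_{\ell\ge 1}[\text{there is a walk of length }\ell\text{ from }u\text{ to }v]\cdot(\av-\bv)^{\ell-1}$ is exactly the series recording a single gap together with its walk-existence constraint, one obtains
\[
  \Psi=\sum_{k\ge 0}K(\av-\bv)\cdot\bigl((\bv I)\cdot K(\av-\bv)\bigr)^{k}=K(\av-\bv)\cdot\bigl(I-(\bv I)\cdot K(\av-\bv)\bigr)^{-1},
\]
where $I$ is the identity matrix, $\bv I$ the corresponding scalar matrix, and $K(\av-\bv)$ the matrix obtained from $K(t)$ by the substitution $t=\av-\bv$. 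Everything here is legitimate on the level of formal power series: the degree-$m$ part of $\Psi_{x,y}$ involves only $k\le m$ and walks of total length $m+1$, so it is a finite sum; and since $(\bv I)\cdot K(\av-\bv)$ has zero constant term, the geometric series converges degree by degree and $I-(\bv I)\cdot K(\av-\bv)$ is invertible over the matrices with entries in $\Qqq\langle\langle\av,\bv\rangle\rangle$.

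It remains to argue that every entry of the right-hand side is a rational non-commutative series. By the preceding lemma each $K_{x,y}(t)$ is a rational function of the commuting variable $t$; being a power series, it can be written $K_{x,y}(t)=p_{x,y}(t)/q_{x,y}(t)$ with $q_{x,y}(0)\ne 0$. Since the subalgebra of $\Qqq\langle\langle\av,\bv\rangle\rangle$ generated by $\av-\bv$ is commutative, the substitution $t=\av-\bv$ is well defined and gives $K_{x,y}(\av-\bv)=p_{x,y}(\av-\bv)\cdot q_{x,y}(\av-\bv)^{-1}$; here $q_{x,y}(\av-\bv)$ has nonzero scalar constant term, hence is invertible, and its inverse is rational because the rational series form the smallest family containing all polynomials and closed under sum, product, and the star operation $S\mapsto\sum_{n\ge 0}S^{n}$ defined for series $S$ with zero constant term (apply the star to $1-q_{x,y}(\av-\bv)/q_{x,y}(0)$). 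Thus each entry of $K(\av-\bv)$ is rational. Finally, rational non-commutative series are closed under finite sums, finite products, and the matrix star $A\mapsto\sum_{k\ge 0}A^{k}$ for square matrices $A$ all of whose entries have zero constant term (see~\cite[Section~6.5]{Stanley_EC_2}); applying these operations to $A=(\bv I)\cdot K(\av-\bv)$ and then multiplying on the left by $K(\av-\bv)$ shows that every entry $\Psi_{x,y}$ of $\Psi=K(\av-\bv)\cdot(I-A)^{-1}$ is a rational non-commutative generating function in $\av$ and $\bv$. The main obstacle I anticipate is bookkeeping rather than depth: correctly assembling the chain weights into precisely the geometric series above, and making the substitution of $\av-\bv$ into the scalar rational functions clean; once the formula $\Psi=K(\av-\bv)\cdot\bigl(I-(\bv I)\cdot K(\av-\bv)\bigr)^{-1}$ is in place, rationality follows at once from closure properties.
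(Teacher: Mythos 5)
Your proposal is correct and takes essentially the same route as the paper: both derive the identity $\Psi = K(\av-\bv)\cdot\bigl(I - \bv\cdot K(\av-\bv)\bigr)^{-1}$ by decomposing chains into their gaps (with $K(\av-\bv)$ recording walk-existence for each gap) and summing the resulting matrix-geometric series. Your final paragraph makes explicit the closure-property argument that the paper leaves implicit in its closing sentence, but the substance is identical.
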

\begin{proof}
We first restrict ourselves to 
summing weights of chains
which has length $k+1$
in the level poset,
that is, after excluding
the minimal and maximal element,
those chains consisting of $k$ elements.
The matrix enumerating such
chains is given by the product
$$   K(\av-\bv) \cdot \bv \cdot K(\av-\bv) \cdot \bv \cdots
                \cdot \bv \cdot K(\av-\bv) 
    =
     K(\av-\bv) \cdot (\bv \cdot K(\av-\bv))^{k}  .   $$
Summing over all $k \geq 0$, we obtain
\begin{equation}
   \Psi =   K(\av-\bv) \cdot (I - \bv \cdot K(\av-\bv))^{-1}  .   
\label{equation_Psi}
\end{equation}
Hence each entry of the matrix $\Psi$ is
a rational generating function in $\av$ and $\bv$.
\end{proof}

We turn our attention to the $\cd$-index of level Eulerian posets.
\begin{theorem}
For a level Eulerian poset
the $\ab$-series $\Psi_{x,y}$ is
a rational generating function in the non-commutative variables
$\cv$ and $\dv$.
\label{theorem_cd_rational}
\end{theorem}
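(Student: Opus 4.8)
The plan is to bootstrap from Theorem~\ref{theorem_ab_rational}. First I would record the two inputs. By that theorem each entry $\Psi_{x,y}$ of the $\ab$-series matrix is a rational series in $\av$ and $\bv$; and since $P$ is level Eulerian, every interval $[(x,0),(y,m+1)]$ is Eulerian, so by the Bayer--Klapper theorem its $\ab$-index is a homogeneous $\cd$-polynomial of degree $m$. Summing over $m\geq 0$ then shows that $\Psi_{x,y}$ is a well-defined formal power series in $\cv$ and $\dv$; the content of the theorem is that this $\cd$-series is \emph{rational}.

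The main obstacle is that the inclusion $\Qqq\langle\cv,\dv\rangle\hookrightarrow\Qqq\langle\av,\bv\rangle$ is not a monomial substitution ($\dv\mapsto\av\bv+\bv\av$), so rationality does not transparently descend along it. One sees this on the formula~\eqref{equation_Psi}: splitting $K(t)$ into its even and odd parts in $t$ gives $K(\av-\bv)=A+(\av-\bv)\cdot B$, where $A=\sum_{j\geq 0}\bin(M^{2j+1})\cdot(\av-\bv)^{2j}$ and $B=\sum_{j\geq 0}\bin(M^{2j+2})\cdot(\av-\bv)^{2j}$; by Skolem--Mahler--Lech (equivalently, by the rationality of $K_{x,y}(t)$ established above) the entries of $A$ and $B$ are rational functions of the single central element $(\av-\bv)^{2}=\cv^{2}-2\dv$, hence rational in $\cv$ and $\dv$. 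However, in $\Psi = K(\av-\bv)\cdot(I-\bv\cdot K(\av-\bv))^{-1}$ with $\bv=(\cv-(\av-\bv))/2$ the stray factors of $\av-\bv$ do not visibly pair up, so I would instead argue structurally, using the relation $(\av-\bv)^{2}=\cv^{2}-2\dv$ as a genuine change of free generators.

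Write $e=\av-\bv$. Then $\Qqq\langle\langle\av,\bv\rangle\rangle$ is also the free power series algebra on $\cv$ and $e$, so the class of rational series is the same over $\{\av,\bv\}$ and over $\{\cv,e\}$; and since $\dv=(\cv^{2}-e^{2})/2$ we get $\Qqq\langle\cv,\dv\rangle=\Qqq\langle\cv,e^{2}\rangle$. Inside $\Qqq\langle\langle\cv,e\rangle\rangle$ this subalgebra is precisely the linear span of the words in $\cv$ and $e$ in which every maximal block of consecutive $e$'s has even length. Hence the two inputs of the first paragraph say exactly: $\Psi_{x,y}$ is a rational series over the two-letter alphabet $\{\cv,e\}$ whose support consists only of such ``even-block'' words.

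Finally I would descend the rationality by an inverse-morphism argument. Introduce a fresh letter $z$ of degree two and the injective monoid morphism $\phi\colon\{\cv,z\}^{*}\to\{\cv,e\}^{*}$ determined by $\cv\mapsto\cv$ and $z\mapsto ee$; its image is exactly the set of even-block words, and it is injective because such a word decodes uniquely into $\cv$'s and $ee$-blocks. By the Kleene--Sch\"utzenberger theorem $\Psi_{x,y}$ has a finite-dimensional linear representation $(\lambda,\mu,\gamma)$ over $\{\cv,e\}$; then $(\lambda,\mu\circ\phi,\gamma)$ is a linear representation over $\{\cv,z\}$ of the series $\Phi_{x,y}$ determined by $\langle\Phi_{x,y},w\rangle=\langle\Psi_{x,y},\phi(w)\rangle$, so $\Phi_{x,y}$ is rational over $\{\cv,z\}$. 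Because $\Psi_{x,y}$ is supported on the image of $\phi$ and $\phi$ is injective, the induced continuous algebra map $\widehat\phi\colon\Qqq\langle\langle\cv,z\rangle\rangle\to\Qqq\langle\langle\cv,e\rangle\rangle$ satisfies $\widehat\phi(\Phi_{x,y})=\Psi_{x,y}$, so $\Phi_{x,y}$ is the unique preimage of $\Psi_{x,y}$; under the identification $z\leftrightarrow\cv^{2}-2\dv$ this preimage is the $\cd$-series of $P$, which is therefore rational in $\cv$ and $\dv$. I expect the only delicate points to be the bookkeeping in this last step (injectivity of $\widehat\phi$ and agreement of the preimages) and, if one wants a self-contained argument, recording the standard closure of rational non-commutative series under inverse morphisms via linear representations.
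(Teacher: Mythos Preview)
Your argument is correct, and it takes a genuinely different route from the paper's. The paper never appeals to the general theory of recognizable series; instead it exploits the involution $\av\leftrightarrow\bv$, which fixes $\Psi$ termwise since each homogeneous piece is a $\cd$-polynomial. Applying this involution to the recursion $\Psi=K(\av-\bv)+K(\av-\bv)\cdot\bv\cdot\Psi$ and averaging yields a recursion whose coefficients are $(K(\av-\bv)+K(\bv-\av))/2=K_{0}(\cv^{2}-2\dv)$ and $(K(\av-\bv)\bv+K(\bv-\av)\av)/2=K_{0}(\cv^{2}-2\dv)\cdot\cv+K_{1}(\cv^{2}-2\dv)\cdot(2\dv-\cv^{2})$, both visibly rational in $\cv,\dv$; solving that recursion gives an explicit rational $\cd$-expression for~$\Psi$. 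Your approach instead changes free generators to $\cv,e=\av-\bv$, observes that $\Qqq\langle\cv,\dv\rangle=\Qqq\langle\cv,e^{2}\rangle$ is exactly the span of even-$e$-block words, and then pulls back along the injective monoid morphism $\phi(\cv)=\cv$, $\phi(z)=ee$ using the linear representation; the final passage from rationality in $\cv,z$ to rationality in $\cv,\dv$ via the mutually inverse polynomial substitutions $z\leftrightarrow\cv^{2}-2\dv$ is unproblematic. The paper's method is more constructive (it outputs a formula), but yours is more general: it proves, with no change, that \emph{any} series which is rational in $\av,\bv$ and expressible in $\cv,\dv$ is already rational in $\cv,\dv$. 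That is precisely the question the authors pose as open in their Concluding Remarks, so your proof in fact settles it affirmatively.
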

We call the resulting generating function
guaranteed in Theorem~\ref{theorem_cd_rational}
the {\em $\cd$-series}.

\begin{proof}[Proof of Theorem~\ref{theorem_cd_rational}]
Observe equation~\eqref{equation_Psi}
is equivalent to
\begin{equation}
   \Psi 
     =
   K(\av-\bv)
     +
   K(\av-\bv) \cdot \bv \cdot \Psi .
\label{equation_Psi_1}
\end{equation}
Consider the involution that
exchanges the variables $\av$ and $\bv$.  Note that this involution
leaves series expressed in $\cv$ and $\dv$ invariant. Apply
this involution to equation~\eqref{equation_Psi_1} gives
\begin{equation}
  \Psi
       =
   K(\bv-\av)  +  K(\bv-\av) \cdot \av \cdot \Psi  .
\label{equation_Psi_2}
\end{equation}
Add the two equations~\eqref{equation_Psi_1}
and~\eqref{equation_Psi_2} and divide by $2$.
\begin{equation}
  \Psi
       =
   (K(\av-\bv) + K(\bv-\av))/2 
       +
   (K(\av-\bv) \cdot \bv  +  K(\bv-\av) \cdot \av)/2 \cdot \Psi  .
\label{equation_Psi_1_2}
\end{equation}
Divide the generating function $K(t)$ into its even, respectively odd,
generating function, that is,
let
$$   K_{0}(t) = \frac{ K(\sqrt{t}) + K(-\sqrt{t}) }{2}
     \:\:\:\: \text{ and } \:\:\:\:
     K_{1}(t) = \frac{ K(\sqrt{t}) - K(-\sqrt{t}) }{2 \cdot \sqrt{t}} .  $$
We have
$K(t) = K_{0}(t^{2}) + K_{1}(t^{2}) \cdot t$
and
$K(-t) = K_{0}(t^{2}) - K_{1}(t^{2}) \cdot t$.
Note that
\begin{eqnarray*}
   (K(\av-\bv) + K(\bv-\av))/2 
  & = &
   K_{0}(\cv^{2} - 2 \cdot \dv) , \\
   (K(\av-\bv) \cdot \bv  +  K(\bv-\av) \cdot \av)/2
  & = &
   K_{0}(\cv^{2} - 2 \cdot \dv) \cdot \cv
    +
   K_{1}(\cv^{2} - 2 \cdot \dv) \cdot (2 \cdot \dv - \cv^{2}) .
\end{eqnarray*}
The result now follows since $K_{0}$ and $K_{1}$ are rational
generating functions and
we can solve for $\Psi$ in
equation~\eqref{equation_Psi_1_2}.
\end{proof}

Bayer and Hetyei~\cite{Bayer_Hetyei_G}
proved that
the $\ab$-index of a half-Eulerian poset
is a polynomial in the two variables $\av$ and $(\av-\bv)^{2}$.
We now show this also holds for the rational
series of a level half-Eulerian poset.
Define the algebra morphism $f_{\leftrightarrow}$ on
$\Rrr\langle\langle \av,\bv \rangle\rangle$
by $f_{\leftrightarrow}(\av-\bv) =\av-\bv$
and $f_{\leftrightarrow}(\bv) = 2\bv$.
It is then easy to observe from
the chain definition~\eqref{equation_chain_definition}
of the $\ab$-index that for any poset~$P$
the $\ab$-index of the poset~$P$ and its horizontal double
are related by
$\Psi(D_{\leftrightarrow}(P)) = f_{\leftrightarrow}(\Psi(P))$.
\begin{corollary}
The $\ab$-series $\Psi_{x,y}$ of a level half-Eulerian poset
is a rational generating function in the non-commutative variables
$\av$ and $(\av-\bv)^{2}$.
\end{corollary}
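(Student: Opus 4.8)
The plan is to reduce the statement to Theorem~\ref{theorem_cd_rational} by passing to the horizontal double. First I would note that for any level poset $Q$ with vertex set $V$, the underlying digraph of $D_{\leftrightarrow}(Q)$ has vertex set $V\times\{1,2\}$, and for every $u,v\in V$, every $m\geq 0$ and every $k,l\in\{1,2\}$ the interval $[(u_k,0),(v_l,m)]$ in $D_{\leftrightarrow}(Q)$ is isomorphic, as an abstract poset, to $D_{\leftrightarrow}([(u,0),(v,m)])$: the operation $D_{\leftrightarrow}$ doubles only the elements strictly between $\hz$ and $\ho$, and the minimum $(u_k,0)$ and maximum $(v_l,m)$ of the interval are not doubled within it. Combining this with the finite-poset identity $\Psi(D_{\leftrightarrow}(R))=f_{\leftrightarrow}(\Psi(R))$ recorded above, with the level-poset fact $\Psi([(u,i),(v,j)])=\Psi([(u,0),(v,j-i)])$, and with continuity of $f_{\leftrightarrow}$ (it is degree-preserving), I would sum over $m$ to obtain that the $\ab$-series matrix of $D_{\leftrightarrow}(Q)$ satisfies $\Psi^{D_{\leftrightarrow}(Q)}_{u_k,v_l}=f_{\leftrightarrow}(\Psi^{Q}_{u,v})$ for all $k,l$. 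Now if $P$ is level half-Eulerian, then by Corollary~\ref{corollary_h_double} the poset $D_{\leftrightarrow}(P)$ is level Eulerian, so Theorem~\ref{theorem_cd_rational} gives that each entry $f_{\leftrightarrow}(\Psi^{P}_{u,v})=\Psi^{D_{\leftrightarrow}(P)}_{u_k,v_l}$ is a rational series in $\cv$ and $\dv$.

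It then remains to pull this rationality back through $f_{\leftrightarrow}$. Since $f_{\leftrightarrow}(\av-\bv)=\av-\bv$ and $f_{\leftrightarrow}(\bv)=2\bv$, the morphism $f_{\leftrightarrow}$ sends $\av\mapsto \av+\bv$ and $\bv\mapsto 2\bv$; it is a degree-preserving algebra endomorphism of $\Rrr\langle\langle\av,\bv\rangle\rangle$ whose effect on the degree-one part is the invertible substitution with matrix $\left(\begin{smallmatrix}1&0\\1&2\end{smallmatrix}\right)$, hence $f_{\leftrightarrow}$ is a graded automorphism, with $f_{\leftrightarrow}^{-1}$ determined by $\av\mapsto \av-\bv/2$, $\bv\mapsto \bv/2$. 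A continuous graded algebra homomorphism commutes with sum, product, and the star operation $S\mapsto\sum_{n\geq 0}S^{n}$ (the image of a series with zero constant term again has zero constant term), so $f_{\leftrightarrow}^{-1}$ carries rational series to rational series. Finally I would compute $f_{\leftrightarrow}^{-1}(\cv)=f_{\leftrightarrow}^{-1}(\av+\bv)=\av$ and, using $(\av-\bv)^{2}=\cv^{2}-2\dv$ together with $f_{\leftrightarrow}((\av-\bv)^{2})=(\av-\bv)^{2}$, obtain $f_{\leftrightarrow}^{-1}(\cv^{2}-2\dv)=(\av-\bv)^{2}$. Because $\Rrr\langle\langle\cv,\dv\rangle\rangle=\Rrr\langle\langle\cv,\cv^{2}-2\dv\rangle\rangle$, every rational expression in $\cv$ and $\dv$ rewrites as a rational expression in $\cv$ and $\cv^{2}-2\dv$; applying $f_{\leftrightarrow}^{-1}$ to such an expression for $\Psi^{D_{\leftrightarrow}(P)}_{u_k,v_l}$ exhibits $\Psi^{P}_{u,v}=f_{\leftrightarrow}^{-1}(\Psi^{D_{\leftrightarrow}(P)}_{u_k,v_l})$ as a rational expression in $\av$ and $(\av-\bv)^{2}$, which is the assertion.

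I expect the only delicate points to be the two bookkeeping steps: checking the interval isomorphism carefully enough that the doubled minimum and maximum cause no trouble (so that $\Psi^{D_{\leftrightarrow}(P)}_{u_k,v_l}=f_{\leftrightarrow}(\Psi^{P}_{u,v})$ genuinely holds), and making precise the passage from ``rational in $\cv,\dv$'' to ``rational in $\av$ and $(\av-\bv)^{2}$'', i.e.\ observing that a homogeneous algebra automorphism respects the inductive definition of rationality. As an alternative that avoids invoking Theorem~\ref{theorem_cd_rational} as a black box, one may instead apply $f_{\leftrightarrow}^{-1}$ directly to the explicit formula~\eqref{equation_Psi_1_2} for the $\ab$-series of $D_{\leftrightarrow}(P)$ in terms of the even and odd parts $K_{0},K_{1}$ of the walk-generating matrix, producing a completely explicit rational expression in $\av$ and $(\av-\bv)^{2}$; specializing to a single finite interval recovers the Bayer--Hetyei polynomiality result of~\cite{Bayer_Hetyei_G}.
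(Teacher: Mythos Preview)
Your proposal is correct and follows essentially the same route as the paper: pass to the horizontal double, invoke Theorem~\ref{theorem_cd_rational} to get rationality in $\cv$ and $\dv$ (equivalently in $\cv$ and $\cv^{2}-2\dv=(\av-\bv)^{2}$), and then apply $f_{\leftrightarrow}^{-1}$, noting $f_{\leftrightarrow}^{-1}(\cv)=\av$ and $f_{\leftrightarrow}^{-1}((\av-\bv)^{2})=(\av-\bv)^{2}$. The paper's proof is a terse three sentences leaving the bookkeeping you spell out (the interval isomorphism yielding $\Psi^{D_{\leftrightarrow}(P)}_{u_{k},v_{l}}=f_{\leftrightarrow}(\Psi^{P}_{u,v})$, and that the graded automorphism $f_{\leftrightarrow}^{-1}$ preserves rationality) to the reader; your version just makes these steps explicit.
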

\begin{proof}
Consider the horizontal double of the level half-Eulerian poset.
Its $\ab$-series is a rational generating function
in terms of $\av+\bv = \cv$ and $(\av-\bv)^{2} = \cv^{2} - 2 \dv$.
The result follows by applying the inverse morphism
$f_{\leftrightarrow}^{-1}$ to this
rational series.
\end{proof}

We similarly define the algebra morphism $f_{\updownarrow}$
by
$f_{\updownarrow}(\av-\bv) = (\av-\bv)^{2}$
and
$f_{\updownarrow}(\bv) = \bv (\av-\bv) + (\av-\bv) \bv + \bv^{2} 
                       = \av\bv + \bv\av + \bv^{2}$.
By the chain definition~\eqref{equation_chain_definition}
of the $\ab$-index we can conclude that
$\Psi(D_{\updownarrow}(P)) = f_{\updownarrow}(\Psi(P))$.
We end this section by
presenting the corresponding results for horizontal-
and vertical-doubling of a level poset.
The proof is straightforward and hence omitted.
\begin{proposition}
Let $P$ a level poset with underlying matrix $M$.
Then we have
$$
\Psi(D_{\leftrightarrow}(P))
   =
\begin{pmatrix}
f_{\leftrightarrow}(\Psi(P)) & f_{\leftrightarrow}(\Psi(P)) \\
f_{\leftrightarrow}(\Psi(P)) & f_{\leftrightarrow}(\Psi(P))
\end{pmatrix}  
$$
and
$$
\Psi(D_{\updownarrow}(P))
   =
\begin{pmatrix}
\av \cdot f_{\updownarrow}(\Psi(P)) &
      I + \av \cdot f_{\updownarrow}(\Psi(P)) \cdot \av \\
f_{\updownarrow}(\Psi(P)) &
      f_{\updownarrow}(\Psi(P)) \cdot \av \\
\end{pmatrix} ,
$$
where the two morphisms
$f_{\leftrightarrow}$ and $f_{\updownarrow}$
are applied entrywise to the matrices.
\end{proposition}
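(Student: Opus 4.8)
\textbf{The horizontal double.} The plan is to observe that forming the horizontal double commutes with passing to intervals. For elements $(u,i)<(v,j)$ of $P$ and indices $k,l\in\{1,2\}$, the interval $[(u_k,i),(v_l,j)]$ in $D_{\leftrightarrow}(P)$ has interior equal to the set of both copies $(w_1,h),(w_2,h)$ of each $(w,h)$ lying strictly between $(u,i)$ and $(v,j)$ in $P$, with the induced order, while its bottom and top stay single (distinct copies at the same level are incomparable); hence this interval is exactly the finite horizontal double of $[(u,i),(v,j)]$. Applying the identity $\Psi(D_{\leftrightarrow}(R))=f_{\leftrightarrow}(\Psi(R))$ recorded above for finite $R$ gives $\Psi([(u_k,i),(v_l,j)])=f_{\leftrightarrow}(\Psi([(u,i),(v,j)]))$, independent of $k$ and $l$. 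Summing over all $m\geq 0$ and using that $f_{\leftrightarrow}$ is a continuous algebra homomorphism yields $(\Psi(D_{\leftrightarrow}(P)))_{u_k,v_l}=f_{\leftrightarrow}(\Psi_{u,v})$ for each $k,l$, which is the asserted block form.

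\textbf{The vertical double, set-up.} Write the vertices of $D_{\updownarrow}(G)$ as $u_1,u_2$ for $u\in V$, so that the edges are $u_1\rightarrow u_2$ together with $u_2\rightarrow v_1$ for every edge $u\rightarrow v$ of $G$, as encoded by $D_{\updownarrow}(M)$ in~\eqref{equation_vertical_doubling_matrix}. The type of the vertices alternates along every directed walk, and $D_{\updownarrow}(M)^{2}=\bigl(\begin{smallmatrix}M&0\\0&M\end{smallmatrix}\bigr)$; hence by Corollary~\ref{corollary_walks} a nonempty interval $[(u_a,0),(v_b,N)]$ occurs only when $(a,b,N\bmod 2)$ is one of $(1,1,0)$, $(2,2,0)$, $(2,1,1)$, $(1,2,1)$, and these four families contribute the $(1,1)$-, $(2,2)$-, $(2,1)$- and $(1,2)$-blocks of $\Psi(D_{\updownarrow}(P))$.

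\textbf{The vertical double, interval types.} I would next show, by tracing cover relations with Corollary~\ref{corollary_walks}, that for $(u,0)\leq(v,m)$ in $P$ with $Q=[(u,0),(v,m)]$ the four families look as follows: $[(u_2,0),(v_1,2m+1)]$ is isomorphic to $D_{\updownarrow}(Q)$; $[(u_1,0),(v_1,2m)]$ is $D_{\updownarrow}(Q)$ with one extra atom adjoined below every other interior element; $[(u_2,0),(v_2,2m)]$ is $D_{\updownarrow}(Q)$ with one extra coatom adjoined above every other interior element; and $[(u_1,0),(v_2,2m+1)]$ is $D_{\updownarrow}(Q)$ with both such an atom and such a coatom adjoined. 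The adjoined atom is the element $(u_2,1)$ forced as the unique atom whenever the bottom vertex has type $1$, and the adjoined coatom is the element forced as the unique coatom whenever the top vertex has type $2$. A short chain-enumeration from~\eqref{equation_chain_definition} shows that adjoining to a graded poset $R$ an atom lying below every other interior element multiplies $\Psi(R)$ on the left by $\av$ (pair each chain through the new atom with the chain omitting it and use $(\av-\bv)+\bv=\av$ in the first slot), and dually that such a coatom multiplies $\Psi(R)$ on the right by $\av$. Combining this with $\Psi(D_{\updownarrow}(Q))=f_{\updownarrow}(\Psi(Q))$ and summing over the relevant ranks, using continuity of $f_{\updownarrow}$, produces the $(2,1)$-block $f_{\updownarrow}(\Psi(P))$, the $(1,1)$-block $\av\cdot f_{\updownarrow}(\Psi(P))$, the $(2,2)$-block $f_{\updownarrow}(\Psi(P))\cdot\av$, and the $(1,2)$-block $\av\cdot f_{\updownarrow}(\Psi(P))\cdot\av$ together with the length-one contribution from the edges $u_1\rightarrow u_2$, namely the identity matrix~$I$.

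\textbf{Expected obstacle.} The only genuine work is the bookkeeping in the vertical-double case: verifying that the four interval families have the stated isomorphism types and, in particular, locating the forced atom $(u_2,1)$ and the forced coatom correctly relative to $D_{\updownarrow}(Q)$. Once those identifications are in place, the $\av$-factors and the summations are routine, and the horizontal-double case is immediate.
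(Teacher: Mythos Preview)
Your approach is correct and is essentially what the authors must have intended: the paper omits the proof entirely, calling it ``straightforward.'' The horizontal case is exactly as you say (intervals in $D_{\leftrightarrow}(P)$ are horizontal doubles of intervals in $P$, then apply $\Psi(D_{\leftrightarrow}(R))=f_{\leftrightarrow}(\Psi(R))$), and for the vertical case your identification of the four interval families with $D_{\updownarrow}(Q)$ possibly augmented by a forced atom and/or coatom, together with the observation that a unique atom (coatom) multiplies $\Psi$ on the left (right) by~$\av$, is the natural argument.

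One small indexing slip: for $Q=[(u,0),(v,m)]$ of rank $m$, the finite vertical double $D_{\updownarrow}(Q)$ has rank $2m-1$, so the matching interval in the $(2,1)$-block is $[(u_2,0),(v_1,2m-1)]$, not $[(u_2,0),(v_1,2m+1)]$. Your other three ranks ($2m$, $2m$, $2m+1$) are consistent with this, so it is just a typo in the first case; the summations and the final block form are unaffected.
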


\section{Computing the $\cd$-series}

The recursions~\eqref{equation_Psi_1},
\eqref{equation_Psi_2} and~\eqref{equation_Psi_1_2}
are not very practical for explicitly computing the
$\cd$-series of a level Eulerian poset.
In this section we offer a different method
to show the $\cd$-series has a given expression
based upon the coalgebraic techniques
developed in~\cite{Ehrenborg_Readdy_coproducts}.

Define a derivation
$\Delta : \Rrr\langle\langle \av,\bv \rangle\rangle
   \longrightarrow
         \Rrr\langle\langle \av,\bv,\tv \rangle\rangle$
by
$\Delta(\av) = \Delta(\bv) = \tv$, $\Delta(1) = 0$
and require that it satisfy the product rule
$\Delta(u \cdot v) = \Delta(u) \cdot v + u \cdot \Delta(v)$.
It is straightforward to verify that this derivation is
well-defined. Observe that the coefficient of a monomial $u$
in $\Delta(v)$ is zero unless $u$ contains exactly one $\tv$.

Note that for a formal power series $u$ without constant term
we have that
$$  \Delta\left( \frac{1}{1-u} \right)
   =
        \frac{1}{1-u} \cdot \Delta(u) \cdot \frac{1}{1-u}  , $$
since
$\Delta(u^{m}) = \sum_{i=0}^{m-1} u^{i} \cdot \Delta(u) \cdot u^{m-1-i}$
and then by summing over all $m$.

When restricting the derivation $\Delta$ to non-commutative polynomials
$\Rrr\langle \av,\bv \rangle$, it becomes equivalent to the coproduct
on $\ab$-polynomials introduced by Ehrenborg and Readdy
in~\cite{Ehrenborg_Readdy_coproducts}.
To see this fact, observe that the subspace of
$\Rrr\langle \av,\bv,\tv \rangle$ spanned by monomials
containing exactly one $\tv$ is isomorphic to
$\Rrr\langle \av,\bv \rangle \tensor \Rrr\langle \av,\bv \rangle$
by mapping the variable $\tv$ to the tensor sign, that is,
$u \cdot \tv \cdot v \longmapsto u \tensor v$.

We need two properties of the derivation $\Delta$.
The first is that the $\ab$-index is a coalgebra homomorphism,
that is, for a poset $P$ we have
\begin{equation}
    \Delta(\Psi(P))
  =
    \sum_{\hz < x < \ho}
        \Psi([\hz,x]) \cdot \tv \cdot \Psi([x,\ho])  .
\label{equation_coalgebra_morphism}
\end{equation}
See~\cite[Proposition~3.1]{Ehrenborg_Readdy_coproducts}.
Applying~\eqref{equation_coalgebra_morphism}
to all the rank $m+1$ intervals of a level poset, we have that
$$  \Delta(\Psi_{m})
  =
    \sum_{i=0}^{m-1} \Psi_{i} \cdot \tv \cdot \Psi_{m-1-i} , $$
where $\Psi_{m}$ denotes the degree $m$ terms of
the $\ab$-series $\Psi$.
The second property of the derivation
is that when restricting the derivation
to $\ab$-polynomials of degree $n$, the kernel of the map
is spanned by $(\av-\bv)^{n}$.
See~\cite[Lemma~2.2]{Ehrenborg_Readdy_coproducts}.

We now prove the main result of this section.
It is a method to recognize the $\ab$-series matrix of a level poset.
\begin{theorem}
The $n \times n$ matrix $\Psi$ of the $\ab$-series 
of a level poset
is the unique solution to the equation system
\begin{eqnarray}
  \left. \Psi\right|_{\av = t, \bv = 0}
  & = &
  K(t) ,
\label{equation_Delta_1} \\
\Delta(\Psi)
  & = &
\Psi \cdot \tv \cdot \Psi .
\label{equation_Delta_2}
\end{eqnarray}
\label{theorem_Delta}
\end{theorem}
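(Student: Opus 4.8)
The plan is to verify that the $\ab$-series matrix $\Psi$ satisfies the two stated equations, and then to argue uniqueness. For the first equation, specializing $\av = t$ and $\bv = 0$ in the chain weight $\wt(c)$ kills every chain with an internal element (each such chain carries at least one factor $\bv$), leaving only the two-element chains $\{\hz < x_1 < \ho\}$, whose weight specializes to $(\av - \bv)^{\rho(x_0,x_1)-1} = t^{m}$ for an interval $[(x,0),(y,m+1)]$ of rank $m+1$. Summing over $m \geq 0$ and using that the coefficient counts whether a walk of length $m+1$ exists from $x$ to $y$, we recover exactly $K_{x,y}(t)$. This establishes~\eqref{equation_Delta_1}.

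For~\eqref{equation_Delta_2}, I would apply the coalgebra homomorphism property~\eqref{equation_coalgebra_morphism} to each interval $[(x,0),(y,m+1)]$ of the level poset. The interior elements of such an interval are precisely the pairs $(z,\ell)$ with $0 < \ell < m+1$ and $(x,0) < (z,\ell) < (y,m+1)$, and each such element splits the interval as $[(x,0),(z,\ell)] \times [(z,\ell),(y,m+1)]$, that is, as a rank-$\ell$ interval times a rank-$(m+1-\ell)$ interval. Summing~\eqref{equation_coalgebra_morphism} over all $y$ (fixing $x$ in the row) and over all $m$, and then collecting terms into matrix form, the inner sum over intermediate vertices $z$ becomes ordinary matrix multiplication: $\Delta(\Psi_{x,y}) = \sum_{z} \Psi_{x,z} \cdot \tv \cdot \Psi_{z,y}$, which is exactly the $(x,y)$ entry of $\Psi \cdot \tv \cdot \Psi$. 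Here one uses that $\Psi_{x,z} = 0$ when $(x,0) \not\leq (z,\ell)$ for the relevant $\ell$, so the matrix product automatically encodes the order relation; care is needed to check that the bookkeeping of ranks matches (a rank-$\ell$ interval contributes a degree-$(\ell-1)$ term), but this is the routine degree count already recorded before the theorem statement.

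The substantive part is uniqueness, and this is where I expect the main obstacle. Suppose $\Psi'$ is another matrix of formal power series satisfying both equations. I would argue degree by degree that $\Psi'_m = \Psi_m$ for all $m \geq 0$. The base case $m = 0$: each entry of $\Psi_0$ is either $1$ or $0$ according to whether $(x,v)$ is an edge, and equation~\eqref{equation_Delta_1} pins this down since $\Psi_0$ has no $\bv$'s. For the inductive step, assume $\Psi'_i = \Psi_i$ for all $i < m$. Equation~\eqref{equation_Delta_2}, read in degree $m-1$ on the left (the derivation lowers the relevant count), gives $\Delta(\Psi'_m) = \sum_{i=0}^{m-1} \Psi'_i \cdot \tv \cdot \Psi'_{m-1-i}$, and by the inductive hypothesis the right-hand side equals $\sum_{i=0}^{m-1} \Psi_i \cdot \tv \cdot \Psi_{m-1-i} = \Delta(\Psi_m)$. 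Hence $\Delta(\Psi'_m - \Psi_m) = 0$, so by the stated kernel property~\cite[Lemma~2.2]{Ehrenborg_Readdy_coproducts} each entry of $\Psi'_m - \Psi_m$ is a scalar multiple of $(\av-\bv)^m$. Finally, specializing $\av = t, \bv = 0$ and invoking~\eqref{equation_Delta_1} forces that scalar to be zero: both $\Psi'_m$ and $\Psi_m$ have the same image $[t^m]K(t) \cdot t^m$ under this specialization, and $(\av-\bv)^m \mapsto t^m \neq 0$, so the coefficient must vanish. This closes the induction and gives uniqueness.

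One point to watch in writing this up carefully is the precise degree/rank indexing in~\eqref{equation_Delta_2}: the derivation $\Delta$ applied to a degree-$m$ monomial produces monomials with exactly one $\tv$ among $m-1$ remaining letters, matching the decomposition of a rank-$(m+1)$ interval into a rank-$(\ell)$ and a rank-$(m+1-\ell)$ piece whose $\ab$-indices have degrees $\ell-1$ and $m-\ell$ summing to $m-1$. Once this is stated cleanly the argument is a direct induction, and no new estimates from the earlier sections are needed beyond the two cited properties of $\Delta$.
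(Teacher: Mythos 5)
Your proposal is correct and takes essentially the same approach as the paper: the paper's printed proof consists solely of the uniqueness induction — base case $\Gamma_0 = K(0) = M$, inductive step using $\Delta(\Gamma_m - \Psi_m) = 0$ together with the kernel property of $\Delta$ to force $\Gamma_m - \Psi_m = N \cdot (\av-\bv)^m$, which the specialization $\bv = 0$ then annihilates — with existence (that $\Psi$ itself solves the system) left to the discussion preceding the theorem. You spell out that existence check explicitly via the chain-weight definition for~\eqref{equation_Delta_1} and the coalgebra property~\eqref{equation_coalgebra_morphism} for~\eqref{equation_Delta_2}, which is a welcome addition, and your uniqueness induction is the paper's.
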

\begin{proof}
Let $\Gamma$ be a solution to the two
equations~\eqref{equation_Delta_1} and~\eqref{equation_Delta_2}.
Write $\Gamma$ as the sum
$\sum_{m \geq 0} \Gamma_{m}$ where the entries
of the matrix $\Gamma_{m}$ are homogeneous of degree $m$.
By induction on $m$ we will prove that
$\Gamma_{m}$ is equal to $\Psi_{m}$, the $m$th homogeneous
component of the matrix $\Psi$. The base case $m=0$ is as follows.
$$ \Gamma_{0} = \left.\Gamma\right|_{\av=\bv=0}
              = \left.K(t)\right|_{t=0} = M = \Psi_{0} . $$
Now assume the statement is true for all values less than $m$.
Observe that the $m$th component of
equation~\eqref{equation_Delta_2} is
$$ \Delta(\Gamma_{m})
   =
\sum_{i=0}^{m-1}
\Gamma_{i} \cdot \tv \cdot \Gamma_{m-1-i}
   =
\sum_{i=0}^{m-1}
\Psi_{i} \cdot \tv \cdot \Psi_{m-1-i}
   =
\Delta(\Psi_{m}) . $$
Hence the difference $\Gamma_{m} - \Psi_{m}$ is
a constant matrix $N$ times the $\ab$-polynomial $(\av-\bv)^{m}$.
However, the matrix $N$ is zero since
$\left. \Gamma_{m}\right|_{\bv=0} = \left. \Psi_{m}\right|_{\bv=0}$,
proving that $\Gamma_{m}$ is equal to $\Psi_{m}$,
completing the induction.
\end{proof}

\begin{example}
{\rm
Consider the level Eulerian poset in Figure~\ref{figure_main}.
We claim that its $\cd$-series matrix $\Psi$ is given by
$$
   \Psi
  =
   \begin{pmatrix}
   \frac{1}{1 - \cv - \dv} &
   \frac{1}{1 - \cv - \dv} \cdot \cv + 1 &
   \frac{1}{1 - \cv - \dv} &
   \frac{1}{1 - \cv - \dv} - 1
                                \\[2 mm]
   \frac{1}{1 - \cv - \dv} &
   \frac{1}{1 - \cv - \dv} \cdot \cv &
   \frac{1}{1 - \cv - \dv} &
   \frac{1}{1 - \cv - \dv}
                                \\[2 mm]
   \cv \cdot \frac{1}{1 - \cv - \dv} &
   \cv \cdot \frac{1}{1 - \cv - \dv} \cdot \cv + 1 &
   \cv \cdot \frac{1}{1 - \cv - \dv} &
   \cv \cdot \frac{1}{1 - \cv - \dv} + 1
                                \\[2 mm]
   \frac{1}{1 - \cv - \dv} &
   \frac{1}{1 - \cv - \dv} \cdot \cv &
   \frac{1}{1 - \cv - \dv} &
   \frac{1}{1 - \cv - \dv}
         \end{pmatrix}  .
$$
It is straightforward to check
the first condition in Theorem~\ref{theorem_Delta}:
$$
   \left. \Psi\right|_{\av = t, \bv = 0}
  =
   \left. \Psi\right|_{\cv = t, \dv = 0}
  =
   \begin{pmatrix}
   \frac{1}{1 - t} &
   \frac{1}{1 - t} &
   \frac{1}{1 - t} &
   \frac{1}{1 - t} - 1
                                \\[2 mm]
   \frac{1}{1 - t} &
   \frac{1}{1 - t} - 1 &
   \frac{1}{1 - t} &
   \frac{1}{1 - t}
                                \\[2 mm]
   \frac{1}{1 - t} - 1 &
   \frac{1}{1 - t} - t &
   \frac{1}{1 - t} - 1 &
   \frac{1}{1 - t}
                                \\[2 mm]
   \frac{1}{1 - t} &
   \frac{1}{1 - t} - 1 &
   \frac{1}{1 - t} &
   \frac{1}{1 - t}
         \end{pmatrix}
  =
   K(t) .
$$
To verify the second condition, define the four vectors
$$
 x = \begin{pmatrix} 1 \\ 1 \\ \cv \\ 1 \end{pmatrix} ,
   \:\:\:\:
 y = \begin{pmatrix} 0 \\ 0 \\ 1 \\ 0 \end{pmatrix} ,
   \:\:\:\:
 z = \begin{pmatrix} 1 & \cv & 1 & 1 \end{pmatrix} 
   \:\:\:\: \text{ and } \:\:\:\:
 w = \begin{pmatrix} 0 & 1 & 0 & 0 \end{pmatrix} ,
$$
and the matrix
$$
 A = \begin{pmatrix}
          0 &  1 &  0 & -1 \\
          0 &  0 &  0 &  0 \\
          0 &  1 &  0 &  1 \\
          0 &  0 &  0 &  0
     \end{pmatrix} .
$$
We have the following relations between
this matrix and these vectors:
$$
     z \cdot \tv \cdot x = 2 \cdot \tv + \cv \tv + \tv \cv
                         = \Delta(\cv + \dv),
\:\: 
     A \cdot x = 2 \cdot y,
\:\: 
     z \cdot A = 2 \cdot w
\:\: \text{and} \:\: 
     A \cdot \tv \cdot A  = 0    .  $$
Furthermore, the derivative $\Delta$ acts as follows
$$    \Delta(x) = 2 \cdot \tv \cdot y,
\:\:
      \Delta(z) = 2 \cdot \tv \cdot w
\:\: \text{and} \:\: 
      \Delta(A) = 0              .  $$
Observe now that
$$  \Psi = x \cdot \frac{1}{1 - \cv - \dv} \cdot z + A . $$
Hence we have the following calculation
\begin{eqnarray*}
\Psi \cdot \tv \cdot \Psi
  & = &
\left(x \cdot \frac{1}{1 - \cv - \dv} \cdot z + A\right)
 \cdot \tv \cdot
\left(x \cdot \frac{1}{1 - \cv - \dv} \cdot z + A\right) \\
  & = &
x \cdot \frac{1}{1 - \cv - \dv} \cdot z \cdot \tv \cdot
x \cdot \frac{1}{1 - \cv - \dv} \cdot z  \\
  &   &
+
x \cdot \frac{1}{1 - \cv - \dv} \cdot z \cdot A \cdot \tv
+
\tv \cdot A \cdot x \cdot \frac{1}{1 - \cv - \dv} \cdot z
+
A \cdot \tv \cdot A \\
  & = &
x \cdot \frac{1}{1 - \cv - \dv} \cdot \Delta(\cv + \dv)
  \cdot \frac{1}{1 - \cv - \dv} \cdot z  \\
  &   &
+
x \cdot \frac{1}{1 - \cv - \dv} \cdot 2 \cdot w \cdot \tv
+
\tv \cdot 2 \cdot y \cdot \frac{1}{1 - \cv - \dv} \cdot z \\
  & = &
x \cdot \Delta\left(\frac{1}{1 - \cv - \dv}\right) \cdot z
+
x \cdot \frac{1}{1 - \cv - \dv} \cdot \Delta(z)
+
\Delta(x) \cdot \frac{1}{1 - \cv - \dv} \cdot z \\
  & = &
\Delta\left(x \cdot \frac{1}{1 - \cv - \dv} \cdot z\right) \\
  & = &
\Delta\left(\Psi\right) ,
\end{eqnarray*}
proving our claim.
}
\end{example}

As a corollary to this example we obtain
an interesting Eulerian poset
whose $\cd$-index has all of its coefficients to be $1$.
\begin{corollary}
The $\cd$-index of the interval
$[(1,0),(1,m+1)]$ in the level poset 
in Figure~\ref{figure_main}
is the sum of all $\cd$-monomials of degree $m$.
\label{corollary_sum_of_monomials}
\end{corollary}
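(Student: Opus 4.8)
The plan is to extract the relevant entry from the $\cd$-series matrix $\Psi$ already computed (and verified via Theorem~\ref{theorem_Delta}) in the preceding example, and then expand that entry as a geometric series. By the definition of the $\ab$-series, $\Psi_{1,1} = \sum_{m \geq 0} \Psi([(1,0),(1,m+1)])$, and since the degree $m$ term on the right is the $\cd$-index of $[(1,0),(1,m+1)]$, the claim is equivalent to the assertion that the degree $m$ homogeneous component of the $(1,1)$ entry of $\Psi$ is the sum of all $\cd$-monomials of degree $m$. The example gives this $(1,1)$ entry explicitly as $1/(1-\cv-\dv)$, so the whole problem reduces to a monomial count inside $1/(1-\cv-\dv)$.

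First I would write $1/(1-\cv-\dv) = \sum_{n \geq 0} (\cv+\dv)^{n}$, which is legitimate since $\cv+\dv$ has no constant term, and then expand each factor $(\cv+\dv)^{n}$ by the non-commutative distributive law into the sum of the $2^{n}$ $\cd$-monomials obtained by choosing either $\cv$ or $\dv$ in each of the $n$ slots. The key combinatorial step is then a bookkeeping argument on degrees: a $\cd$-monomial of degree $m$ that uses exactly $j$ copies of $\dv$ necessarily uses $m-2j$ copies of $\cv$, hence has exactly $m-j$ letters; consequently it can occur only in the expansion of $(\cv+\dv)^{m-j}$, and there it occurs exactly once, since specifying the monomial determines which $j$ of the $m-j$ slots carry a $\dv$. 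Therefore every $\cd$-monomial of degree $m$ appears in $1/(1-\cv-\dv)$ with coefficient exactly $1$, and these are the only degree $m$ monomials appearing.

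Putting these together, the degree $m$ component of $1/(1-\cv-\dv)$ is precisely $\sum u$, the sum over all $\cd$-monomials $u$ of degree $m$, which by the identification above is exactly the $\cd$-index of the interval $[(1,0),(1,m+1)]$. I do not expect a genuine obstacle; the only points requiring care are bookkeeping ones, namely confirming that the degree $m$ part of the $\ab$-series of Figure~\ref{figure_main} records the $\cd$-index of the length $m+1$ interval (which is the definition of $\Psi_{x,y}$ from Section~7) and that the matrix $\Psi$ from the example may be invoked directly because it has already been checked against Theorem~\ref{theorem_Delta}.
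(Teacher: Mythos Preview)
Your proposal is correct and matches the paper's approach: the paper gives no separate proof, merely stating the result as an immediate corollary of the preceding example, so your argument is exactly the intended (implicit) one---read off the $(1,1)$ entry $\frac{1}{1-\cv-\dv}$ from the verified matrix $\Psi$ and expand it as a geometric series.
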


\section{Concluding remarks}

Given a non-commutative rational formal power series in the variables
$\av$ and $\bv$ which can be expressed in terms of $\cv$ and
$\dv$, is it necessarily a non-commutative rational formal power series
in the variables $\cv$ and $\dv$? In other words, is the
following equality true
$$   F_{\text{rat}}\langle\langle \av,\bv \rangle\rangle
          \cap
     F\langle\langle \cv,\dv \rangle\rangle
          =
     F_{\text{rat}}\langle\langle \cv,\dv \rangle\rangle   ,  $$
where $F$ is a field?
It is clear that right-hand side of the above
is contained in the left-hand side.

Corollary~\ref{corollary_sum_of_monomials}
suggests a question about the existence of Eulerian posets.
For which subsets $M$ of $\cd$-monomials of degree $m$
is there an Eulerian poset whose $\cd$-index is the sum
of the monomials in~$M$?
The two extreme cases 
$\cv^{m}$ and $\sum_{\deg(w) = m} w$
both arrive from level Eulerian posets.

An open question is if the eigenvalues or
other classical matrix invariants carry information
about the corresponding level poset,
such as if the level poset is Eulerian or shellable.

\section*{Acknowledgments}

The first author was partially funded by National Science Foundation grant
DMS-0902063.
The authors thank the Department of Mathematics
at the University of Kentucky for funding a research visit 
for the second author to the University of Kentucky,
where part of this research was carried out.

\newcommand{\journal}[6]{{\sc #1,} #2, {\it #3} {\bf #4} (#5), #6.}
\newcommand{\book}[4]{{\sc #1,} ``#2,'' #3, #4.}

\end{document}